\documentclass[12pt]{amsart}

\usepackage{latexsym}
\usepackage{amsmath}
\usepackage{amssymb}
\usepackage{amscd}
\usepackage{multicol}
\usepackage[cmtip,matrix,arrow]{xy}
\usepackage{amsmath,amssymb}
   \usepackage{epsfig}
   
\numberwithin{equation}{section}

\newtheorem{theo}{\bf Theorem}[subsection]
\newtheorem{theorem}{\bf Theorem}[section]
\newtheorem{corollary}[theorem]{\bf Corollary}
\newtheorem{proposition}[theorem]{\bf Proposition}
\newtheorem{propo}[theo]{\bf Proposition}
\newtheorem{lemma}[theorem]{\bf Lemma}
\newtheorem{lem}[theo]{\bf Lemma}
\newtheorem{definition}[theo]{\bf Definition}
\newtheorem{definition-theorem}[theorem]{\bf Theorem-Definition}
\theoremstyle{remark}
\newtheorem{rem}[theorem]{Remark}

\def\ep{\epsilon}
\def\eg{\mathfrak{e}}
\def\fg{\mathfrak{f}}
\def\O{\mathbb{O}}
\def\H{\mathbb{H}}

\def\bR{\mathbb{R}}
\def\bC{\mathbb{C}}
\def\bZ{\mathbb{Z}}
\def\bQ{\mathbb{Q}}

\def\t{\mathfrak{t}}
\def\g{\mathfrak{g}}
\def\ga{\gamma}
 \def\k{\mathfrak{k}}
\def\quott({/\! /}

\def\g{\frak{g}}
\def\t{\frak{t}}

\def\a{\frak{a}}
\def\d{\frak{d}}

\def\k{\frak{k}}
\def\s{\frak{s}}
\def\m{\frak{m}}
\def\n{\frak{n}}
\def\h{\frak{h}}

\def\L{{\mathcal L}}

\def\S{{\mathcal S}}
\def\E{{\mathcal E}}

\def\tb{\tilde{b}}
 \setlength{\textwidth}{6.5in} \setlength{\textheight}{8.7truein}
\setlength{\evensidemargin}{0in} \setlength{\oddsidemargin}{0in}
\setlength{\topmargin}{0truein}
\setlength{\parskip}{0.3\baselineskip}

\title[Topology of the  octonionic flag manifold]
{Topology of the  octonionic flag manifold}

\author[A.-L.~Mare ]{}
\author[M.~Willems]{}
\date{}

\begin{document}

\vspace{-0.5cm}

\maketitle

\vspace{-0.5cm}

\begin{center}

\begin{tabular}{ll }
{\sc Augustin-Liviu Mare} & {\sc Matthieu Willems } \\
Department of Mathematics and Statistics $ \ \ $ & D\'epartement d'informatique    \\
University of
Regina, College West 307.14 &  Universit\'e du Qu\'ebec \`a Montr\'eal \\
Regina, Saskatchewan, S4S 0A2 Canada & Montr\'eal, Qu\'ebec, H3C 3P8 Canada\\
{\tt mareal@math.uregina.ca} & {\tt  matthieu.willems@polytechnique.org}
 \end{tabular}

\end{center}

\vspace{0.3cm}

\begin{abstract}
The octonionic flag manifold ${\rm Fl}(\O)$ 
is the space of all pairs in   $\O {\rm P}^2\times \O {\rm P}^2$
(where $\O {\rm P}^2$ denotes the octonionic projective plane)
which satisfy a certain ``incidence" relation. 
It comes equipped with the projections 
$\pi_1,\pi_2 : {\rm Fl}(\O)\to \O {\rm P}^2$, which are $\O {\rm P}^1$-bundles,
as well as with an action of the group ${\rm Spin}(8)$.
The first two results of this paper give Borel type descriptions of  the usual, respectively
${\rm Spin}(8)$-equivariant cohomology of ${\rm Fl}(\O)$ in terms of $\pi_1$
and $\pi_2$ (actually the Euler classes of the tangent spaces to the fibers
of $\pi_1$, respectively $\pi_2$,
which are rank 8 vector bundles on ${\rm Fl}(\O)$).
We then obtain a Goresky-Kottwitz-MacPherson type description of 
the ring $H^*_{{\rm Spin}(8)}({\rm Fl}(\O))$. Finally, we consider the ${\rm Spin}(8)$-equivariant
$K$-theory ring of ${\rm Fl}(\O)$ and obtain  a Goresky-Kottwitz-MacPherson type description of this ring.

\vspace{0.2cm}

\noindent {\it 2010 Mathematics Subject Classification:} 57T15, 55N91, 19L47
\end{abstract}

\vspace{-0.5cm}

\tableofcontents

\section{Introduction}\label{secfirst}
Let $\O$  denote the (normed, unital, non-commutative, and
non-associative) algebra of octonions and let $\O {\rm P}^2$ be the 
octonionic projective plane (see for instance \cite{Ba}, \cite{Fr}, and \cite{Mu}).
This space is an important example in  incidence geometry.
It turns out that there exists a natural identification
between the space of lines in $\O {\rm P}^2$ and $\O {\rm P}^2$ itself.
The octonionic flag manifold  ${\rm Fl}(\O)$ is the space of all pairs
$(p,\ell) \in \O {\rm P}^2\times \O {\rm P}^2$, where $p$ is a point and
$\ell$ a line, such that $p$ and $\ell$ are incident (see Definition \ref{defin} below).    
Both $\O {\rm P}^2$ and ${\rm Fl}(\O)$ carry natural structures of differentiable
manifolds.  More precisely, we have the natural identifications
\begin{equation}\label{flo}\O {\rm P}^2 = {\rm F}_4/{\rm Spin}(9) \quad {\rm and} \quad {\rm Fl}(\O) = {\rm F}_4/{\rm Spin}(8),\end{equation}
where ${\rm F}_4$ denotes the compact, connected, simply connected Lie group
whose Lie algebra is the (compact) real form of the complex simple
Lie algebra of type ${\rm F}_4$. 
We consider the natural $\O {\rm P}^1$-bundles
$\pi_1, \pi_2:{\rm Fl}(\O)\to \O {\rm P}^2$ given by
$$\pi_1(p,\ell) := p \quad {\rm and} \quad \pi_2(p,\ell) :=\ell.$$ 
Let also $\E_1$ and $\E_2$ denote the rank 8 vector bundles on
${\rm Fl}(\O)$ given by
\begin{equation}\label{e12}\E_1{(p,\ell)} := T_{(p,\ell)}\pi_1^{-1}(p)  \quad {\rm and} \quad
\E_2{(p,\ell)} := T_{(p,\ell)}\pi_2^{-1}(\ell).
\end{equation}
 We will use their Euler classes, $e(\E_1)$ and $e(\E_2)$, relative to appropriate orientations. They  both live in the integral cohomology ring of ${\rm Fl}(\O)$. 
Our first result gives a presentation of this ring in terms of generators and relations. Before stating it, we make the following convention, which will be used throughout  this paper: if it is 
not specified, the coefficient ring of a 
cohomology group is $\bR$.

\begin{theorem}\label{mainth} (a) The group $H^*({\rm Fl}(\O); \bZ)$ is free.

(b) 
We can orient the bundles $\E_1$ and $\E_2$  in such a way that the cohomology classes
$2e(\E_1)+e(\E_2)$ and $e(\E_1)+2e(\E_2)$ are multiples of 3.
Moreover,
the ring $H^*({\rm Fl}(\O);\bZ)$ is generated by 
$\frac{1}{3}(2e(\E_1)+e(\E_2))$ and $\frac{1}{3}(e(\E_1)+2e(\E_2))$,
the ideal of relations being generated by
$$S_i\left(\frac{1}{3}(2e(\E_1)+e(\E_2)),\frac{1}{3}(-e(\E_1)+e(\E_2)), 
-\frac{1}{3}(e(\E_1)+2e(\E_2))\right)=0$$
$i=2,3$.  Here  $S_2$ and $S_3$ denote the second, respectively third 
elementary symmetric polynomials in three variables.
\end{theorem}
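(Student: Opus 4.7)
My plan is to combine the Serre spectral sequence of an $\O P^1$-bundle (for (a)) with the Borel description of $H^*(F_4/\mathrm{Spin}(8))$ and the triality of $\mathrm{Spin}(8)$ (for (b)).

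For part (a), I apply the Serre spectral sequence to the bundle $\pi_1 : \mathrm{Fl}(\O) \to \O P^2$, whose fiber is $\O P^1 \iso S^8$. The $E_2$-page $H^*(\O P^2;\bZ) \otimes H^*(S^8;\bZ)$ has entries only in bidegrees $(p,q)$ with $p\in\{0,8,16\}$ and $q\in\{0,8\}$. The only potentially nonzero differential $d_9 : E_9^{p,8} \to E_9^{p+9,0}$ has target $0$ since $p+9\notin\{0,8,16\}$. Hence $E_2 = E_\infty$, and $H^*(\mathrm{Fl}(\O);\bZ)$ is free of rank $6$ with Poincar\'e polynomial $(1+t^8)(1+t^8+t^{16}) = 1+2t^8+2t^{16}+t^{24}$.

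For part (b), the key geometric input is the triality of $\mathrm{Spin}(8)$: there are three conjugate copies of $\mathrm{Spin}(9) \subset F_4$ containing $\mathrm{Spin}(8)$, giving three $\O P^1$-bundle projections $\pi_1,\pi_2,\pi_3 : \mathrm{Fl}(\O) \to \O P^2$ with vertical tangent bundles $\E_1, \E_2, \E_3$ associated to the three inequivalent $8$-dimensional representations of $\mathrm{Spin}(8)$ (the vector $V$ and the half-spin $S^\pm$). Working over $\bR$, Borel's theorem identifies
$$
H^*(\mathrm{Fl}(\O);\bR) \iso S(\t^*)^{W(D_4)} \big/ \bigl(S(\t^*)^{W(F_4)}_+\bigr),
$$
and each $e(\E_i)$ corresponds to the Euler-class polynomial of the appropriate representation. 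A direct expansion in Cartan coordinates yields the polynomial identity $e(V) = e(S^+) + e(S^-)$; by reorienting one bundle I arrange $e(\E_1)+e(\E_2)+e(\E_3) = 0$.

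With these orientations, the three classes $a=\tfrac13(2e(\E_1)+e(\E_2))$, $b-a=\tfrac13(e(\E_2)-e(\E_1))$, and $-b=\tfrac13(e(\E_3)-e(\E_2))$ sum to zero and are cyclically permuted by the $\bZ/3$-subgroup of $W(F_4)/W(D_4) \iso S_3$ (triality acting on the three representations). Consequently $S_2(a,b-a,-b)$ is $S_3$-symmetric, hence $W(F_4)$-invariant of polynomial degree $8$, so it lies in the defining ideal and vanishes in the Borel quotient. The class $S_3(a,b-a,-b)$ is $\bZ/3$-invariant but transforms by the sign character under transpositions; since the top cohomology $H^{24}(\mathrm{Fl}(\O);\bR) \iso \bR$ carries the trivial $S_3$-representation, $S_3(a,b-a,-b)$ must vanish there. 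A Poincar\'e-series check $(1-t^{16})(1-t^{24})/(1-t^8)^2 = 1+2t^8+2t^{16}+t^{24}$ matches part (a), showing these two relations are complete over $\bR$.

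To promote the presentation to $\bZ$ I use the freeness from (a), which embeds $H^*(\mathrm{Fl}(\O);\bZ)$ as a lattice in $H^*(\mathrm{Fl}(\O);\bR)$. The step I expect to be the main obstacle is the divisibility by $3$ asserting that $a$ and $b$ are actually integral. I plan to handle this by constructing an explicit integral Leray--Hirsch basis of $H^*(\mathrm{Fl}(\O);\bZ)$ using $\pi_j^*$ of a generator of $H^8(\O P^2;\bZ)$, which by the incidence relation restricts to a generator of $H^8$ of the fiber of $\pi_i$ for $i\ne j$; computing each $e(\E_i)$ in this basis (each has fiber-restriction coefficient $2=\chi(S^8)$), and combining with the triality relation $e(\E_1)+e(\E_2)+e(\E_3)=0$ and the geometric $\bZ/3$-action on $\mathrm{Fl}(\O)$ cyclically permuting the bundles, should yield the congruences $2e(\E_1)+e(\E_2)\equiv 0 \pmod 3$ and $e(\E_1)+2e(\E_2)\equiv 0 \pmod 3$; the full integral presentation then follows by comparing ranks in each degree.
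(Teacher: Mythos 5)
Your spectral-sequence proof of part (a) is correct and is in fact more self-contained than the paper's (which just cites Hsiang--Palais--Terng), and your route to (b) via Borel's equal-rank theorem and triality is genuinely different from the paper's, which works with the restricted root system of ${\rm E}_{6(-26)}/{\rm F}_4$ and the Hsiang--Palais--Terng presentation. However, your argument for the degree-$24$ relation $S_3(a,b-a,-b)=0$ rests on two sign claims that are both false. A transposition in $W({\rm F}_4)/W({\rm D}_4)$ does \emph{not} permute the Euler-class polynomials without signs: the reflection $s_{\omega}$ with $\omega=\tfrac12(L^1+L^2+L^3+L^4)$ interchanges $e(V_8)$ and $e(S_8^-)$ but sends $e(S_8^+)\mapsto -e(S_8^+)$ (it negates the weight $\omega$ of $S_8^+$ and fixes the other three representative weights). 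Tracking these signs, the zero-sum triple goes to $(-e_3,-e_2,-e_1)$ and hence the triple $(a,b-a,-b)$ is permuted by a transposition \emph{with no overall sign}; consequently $S_3(a,b-a,-b)$ is fully invariant under the triality $\Sigma_3$, hence a positive-degree $W({\rm F}_4)$-invariant, hence already in the defining ideal --- that is the correct (and shorter) argument. Your premise that $H^{24}$ is the trivial $\Sigma_3$-representation is also wrong: the top class is represented by $e(V_8)e(S_8^+)e(S_8^-)$, the product of the twelve short positive roots of ${\rm F}_4$ (nonzero in the quotient since $\chi=6$), and $s_{\omega}$ sends exactly seven of those roots to negatives of positive roots, so $H^{24}$ is the \emph{sign} representation. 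Had your transformation law been right, $S_3(a,b-a,-b)$ and the top class would both span the sign-isotypic line and your argument would prove nothing.

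The integral statement is also not closed. Granting the divisibility by $3$ in degree $8$ (your Leray--Hirsch plan is essentially the paper's computation $e(\E_i)=2\beta_i-\beta_j$, where the off-diagonal coefficient $-1$ still needs to be produced, e.g.\ from $\langle e(\E_i),[\S_j]\rangle = 2\langle\gamma_i,\gamma_j\rangle/\langle\gamma_j,\gamma_j\rangle$), the final step ``comparing ranks in each degree'' only shows that the subring of $H^*({\rm Fl}(\O);\bZ)$ generated by $a$ and $b$ has finite index; it does not exclude a proper finite-index sublattice in degrees $16$ and $24$. The paper closes exactly this gap by showing that $\tfrac13 e(\E_1)(e(\E_1)+e(\E_2))=a^2$ and $\tfrac13 e(\E_2)(e(\E_1)+e(\E_2))=b^2$ are the Poincar\'e duals of $a$ and $b$ (hence a $\bZ$-basis of $H^{16}$, via a Bernstein--Gelfand--Gelfand computation in the $A_2$ coinvariant ring), and that $\tfrac16 e(\E_1)e(\E_2)(e(\E_1)+e(\E_2))$ generates $H^{24}(X;\bZ)$ because $\langle e(TX),[X]\rangle=\chi(X)=6$. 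You need some substitute for this primitivity argument.
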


The proof of this theorem is given in Section \ref{three}. It relies on  results of Hsiang, Palais, and Terng,
see \cite{Hs-Pa-Te}, concerning the rational
cohomology ring of  isotropy orbits of Riemannian symmetric spaces.

We also study the topology of ${\rm Fl}(\O)$ from the point of view of
the action of the group
$$M:={\rm Spin}(8)$$
which is  canonically induced by Equation  (\ref{flo}). 
More precisely, we are interested in the equivariant cohomology ring
$H^*_M({\rm Fl}(\O))$. 
We recall that this ring has a natural structure of  $H^*(BM)$-module,
which is defined as follows: we pick a point $x_0\in {\rm Fl}(\O)$ which is fixed by the
$M$-action  and consider the ring homomorphism 
$P^*:H^*(BM)=H^*_M(\{x_0\}) \to H^*_M({\rm Fl}(\O))$ induced by
the constant map $P : {\rm Fl}(\O)\to \{x_0\}$. 
We  define
$$a.\alpha:=P^*(a)\alpha,$$
for all $a\in H^*(BM)$ and all $\alpha\in H^*_M({\rm Fl}(\O))$.
In fact, $H^*_M({\rm Fl}(\O))$ becomes in this way a $H^*(BM)$-algebra.
It is a unital algebra, and this  provides  us with a canonical embedding of
$H^*(BM)$ into $H^*_M({\rm Fl}(\O))$; otherwise expressed, we identify 
$H^*(BM)$ with its image under $P^*$.   
As a general observation,  the fact that the $M$-equivariant group of a space is 
an $H^*(BM)$-algebra {\it with  unit} will be sometimes used in what follows without being explicitly mentioned.

It  is worth noting that, since
$M$ is a compact Lie group of rank four, $H^*(BM)$ is a polynomial
ring  with four generators. More precisely, we have
$$H^*(BM)=H^*(BT)^{W_M},$$
where $T\subset M$ is a maximal torus and $W_M$ the Weyl group of the pair $(M, T)$.
This gives
\begin{equation}\label{hgb}H^*(BM)=\bR[a_1,a_2,a_3,a_4],\end{equation}
where  $a_1$ lives in $H^4(BM)$, $a_2$ and $a_3$  in $H^8(BM)$, and $a_4$ in $H^{12}(BM)$ (see \cite[Section 3.7]{Hu}).  
The group $H^*(BM;\bZ)$ is described in 
\cite{Kon}; as it turns out from that description,  it contains 2-torsion elements,
and this is the reason which prevented us from
discussing  the $M$-equivariant cohomology with integer 
coefficients in this paper.

We will give two descriptions of the equivariant cohomology  ring 
$H^*_M({\rm Fl}(\O))$. We first note that the vector bundles $\E_1$ and $\E_2$ are $M$-equivariant and orientable, so we can
associate to them the equivariant Euler classes 
$e_M(\E_1)$ and $e_M(\E_2)$, which are elements of 
$H^8_M({\rm Fl}(\O))$. 
We also consider the equivariant Euler classes \begin{equation}
\label{eulercl}b_k:=e_M(\E_k|_{x_0}).\end{equation} $k=1,2$,
These two elements of 
$H^8_M(\{x_0\})=H^8(BM)$ are linearly independent and we have
$H^*(BM)=\bR[a_1, b_1, b_2, a_4]$ (see Lemma \ref{lasr}). 

\begin{theorem}\label{main}
We can orient the bundles $\E_1$ and $\E_2$  in such a way that,
as an $H^*(BM)$-algebra, $H^*_M({\rm Fl}(\O))$ is generated
by $e_M(\E_1)$ and $e_M(\E_2)$,
the ideal of relations being generated by:
\begin{align}
{}&S_i(2e_M(\E_1)+e_M(\E_2),-e_M(\E_1)+e_M(\E_2), -(e_M(\E_1)+2e_M(\E_2)))\label{ege1}\\
\nonumber {}&  \ \ \ \ \  \ \ \ \ \ \ \ \ \ \  \ \ \ \ \ \ \ \ \ \  \ \ \ \ \ \ \ \ \ \  \ \ \ \ \ \ \ \ \ \  \ \ \ \ \ \ \ \ \ \ =S_i(2b_1+b_2,-b_1+b_2,-(b_1+2b_2)),
\end{align}
$i=2,3$.
As before, $S_2$ and $S_3$ are the  
elementary symmetric polynomials of degree two, respectively three, in three variables.
\end{theorem}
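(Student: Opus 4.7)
The plan is to combine equivariant formality with the standard Borel-model description of the equivariant cohomology of a homogeneous space, using Theorem \ref{mainth} as input.

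\textbf{Formality and generators.} By Theorem \ref{mainth}(a), $H^*({\rm Fl}(\O);\bZ)$ is free, and since its generators $e(\E_1),e(\E_2)$ live in degree $8$ (or in products of such), the cohomology is concentrated in even degrees. Hence the Leray--Serre spectral sequence of the Borel fibration ${\rm Fl}(\O)\to({\rm Fl}(\O))_M\to BM$ collapses at $E_2$, so $H^*_M({\rm Fl}(\O))$ is a free $H^*(BM)$-module of rank $6=\dim_{\bR} H^*({\rm Fl}(\O);\bR)$ and the forgetful map $H^*_M\to H^*$ is surjective. By the equivariant graded Nakayama lemma, any lifts of non-equivariant generators generate the equivariant cohomology as an $H^*(BM)$-algebra. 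Since the generators of Theorem \ref{mainth}(b) are an $\bR$-linear change of basis from $e(\E_1),e(\E_2)$, the latter also generate $H^*({\rm Fl}(\O);\bR)$, and their equivariant lifts $e_M(\E_1),e_M(\E_2)$ generate $H^*_M({\rm Fl}(\O))$ over $H^*(BM)$.

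\textbf{Relations via the Borel model.} Since $T\subset M\subset {\rm F}_4$, the standard isomorphism
$$H^*_M({\rm Fl}(\O))\;\cong\; H^*(BM)\otimes_{H^*(B{\rm F}_4)}H^*(BM)$$
holds, with the left factor giving the $H^*(BM)$-algebra structure and with $1\otimes b_k$ corresponding to $e_M(\E_k)$ (because $\E_k=F_4\times_M\E_k|_{x_0}$ is the $F_4$-associated bundle of the $M$-representation whose equivariant Euler class is $b_k$). The ring $H^*(B{\rm F}_4)$ is polynomial with generators in degrees $4,12,16,24$. Its degree-$4$ and degree-$12$ generators can be chosen in $\bR[a_1,a_4]\subset H^*(BM)$, so they produce only the tautological relations $a_1\otimes 1=1\otimes a_1$ and $a_4\otimes 1=1\otimes a_4$ in the tensor product. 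The crucial point is that the degree-$16$ and degree-$24$ generators are, modulo $(a_1,a_4)$, nonzero scalar multiples of
$$S_2(2b_1+b_2,-b_1+b_2,-(b_1+2b_2))\quad\text{and}\quad S_3(2b_1+b_2,-b_1+b_2,-(b_1+2b_2)).$$
This follows because the three $8$-dimensional representations of $M={\rm Spin}(8)$ (the vector and the two half-spin representations, permuted by triality) have equivariant Euler classes that form a single $W({\rm F}_4)$-orbit modulo $W_M$ whose sum vanishes; their elementary symmetric polynomials are therefore $W({\rm F}_4)$-invariant, lie in $H^*(B{\rm F}_4)$, and must coincide (up to nonzero scalars) with $c_3,c_4$ modulo $(a_1,a_4)$ by a dimension count in degrees $16$ and $24$. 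The tensor-product relations $c_i\otimes 1=1\otimes c_i$ for $i=3,4$ now translate, under $1\otimes b_k=e_M(\E_k)$, into exactly (\ref{ege1}).

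\textbf{Completeness and main obstacle.} Combining the two previous steps gives a surjection of $H^*(BM)$-algebras from the ring in the theorem onto $H^*_M({\rm Fl}(\O))$. The target is free of rank $6$ over $H^*(BM)$ by formality. Reducing the source modulo $H^{>0}(BM)$ yields the non-equivariant quotient of Theorem \ref{mainth}(b), which is a complete intersection with Poincar\'e polynomial $(1-t^{16})(1-t^{24})/(1-t^8)^2=1+2t^8+2t^{16}+t^{24}$ of total $\bR$-dimension $6$. Hence the source is also free of rank $6$ over $H^*(BM)$, and any surjection between free $H^*(BM)$-modules of the same finite rank is an isomorphism. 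The main obstacle is the explicit identification of $c_3$ and $c_4$ with the $S_i$ expressions: this is a root-system computation comparing $W(D_4)$- and $W({\rm F}_4)$-invariants in $H^*(BT)$, in which the triality $S_3$ on the three degree-$8$ $W_M$-invariants of $H^*(BT)$ is realized inside the six-element coset space $W({\rm F}_4)/W(D_4)$.
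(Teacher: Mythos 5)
Your overall architecture is sound and genuinely different from the paper's: you derive the relations from the equal-rank Borel model $H^*_M({\rm Fl}(\O))\cong H^*(BM)\otimes_{H^*(B{\rm F}_4)}H^*(BM)$ and prove completeness by a rank count over $H^*(BM)$, whereas the paper verifies the relations by restricting to the six $M$-fixed points (using injectivity of $\imath^*$, itself obtained from the Harada--Henriques--Holm theorem and the Bruhat-type cell decomposition, plus the explicit fixed-point restrictions of Lemma \ref{ther}) and proves completeness by an induction on degree (Lemma \ref{ideal}). Indeed the paper's closing remark of Section \ref{four} points to exactly the Ho--Sjamaar presentation you invoke. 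Your generation step (spectral sequence collapse plus graded Nakayama) and your completeness step (the source is a flat deformation of the rank-$6$ coinvariant ring of Theorem \ref{mainth}(b), and a graded surjection of free modules of equal rank is an isomorphism) are both correct, and the latter is arguably cleaner than the paper's Lemma \ref{ideal}.

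The genuine gap is at the crux you yourself flag: you never establish that $P_i:=S_i(2b_1+b_2,-b_1+b_2,-(b_1+2b_2))$ lies in the image of $H^*(B{\rm F}_4)$ in $H^*(BM)$, which is exactly what makes $1\otimes P_i=P_i\otimes 1$ a relation in the tensor product. Your justification --- that the Euler classes of $V_8$, $S_8^+$, $S_8^-$ ``form a single $W({\rm F}_4)$-orbit modulo $W_M$ whose sum vanishes'' --- is wrong as stated: with the orientations that make the theorem true one has $b_1+b_2-b_3=0$ (Lemma \ref{wehave}), not $b_1+b_2+b_3=0$, and the arguments of $S_i$ are not the Euler classes $b_k$ but the combinations $2b_1+b_2=b_1+b_3$, $-b_1+b_2$, $-b_2-b_3$. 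What must actually be proved is that the lifts of the triality group to $W({\rm F}_4)$ act on $(b_1,b_2,b_3)$ by \emph{signed} permutations (an automorphism carrying one $8$-dimensional representation to another need not preserve the chosen orientations), that these signed permutations preserve the relation $b_3=b_1+b_2$, and that they consequently permute the three arguments of $S_i$ \emph{without} signs --- only then is $P_i$ a $W({\rm F}_4)$-invariant. Pinning down those signs is precisely the content of the paper's Lemma \ref{ther} and the orientation conventions of Section \ref{three}, and without it your argument cannot exclude, say, a sign that would make $S_3$ anti-invariant. The identification of $P_2,P_3$ with the degree-$16$ and degree-$24$ generators $c_3,c_4$ modulo lower generators, which you also leave to ``a dimension count,'' is not actually needed once the rank-count version of completeness is used, so the invariance of the $P_i$ is the one computation you must supply to close the proof.
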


The second result about $H^*_M({\rm Fl}(\O))$ gives a Goresky-Kottwitz-MacPherson
type presentation of this ring (cf.~\cite{Go-Ko-Ma}, where formulae for the 
equivariant cohomology of certain spaces with actions of tori have been obtained). We will see that the fixed point set
of the $M$-action on ${\rm Fl}(\O)$ can be identified with the symmetric group
$\Sigma_3$.   We put
\begin{equation}\label{conse}\tb_{1}:=b_1, \quad \tb_{2}:=-b_2, \quad \tb_{3}:=b_1-b_2.\end{equation}
We will show  the following:

\begin{theorem}\label{princ} 
(a) The (restriction) map 
$$\imath^*:H^*_M({\rm Fl}(\O))\to H^*_M(\Sigma_3)
=\prod_{\sigma\in \Sigma_3}H^*(BM) $$
induced by the inclusion
$\imath: \Sigma_3={\rm Fl}(\O)^M\hookrightarrow {\rm Fl}(\O)$
is injective. 

(b) The image of
 $\imath^*$ consists of all ordered sets
$(f_\sigma)\in \prod_{\sigma\in \Sigma_3}H^*(BM)$ such that 
$f_\sigma-f_{(i,j)\sigma}$ is a multiple of $\tb_i-\tb_j$,
for all $\sigma\in \Sigma_3$ and all 
$i ,j$ with $1\le i<j\le 3$. Here $(1,2)$, $(2,3)$, and $(1,3)$ denote the obvious elements (transpositions) of
$\Sigma_3$.  
\end{theorem}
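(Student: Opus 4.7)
The plan is to reduce to the $T$-equivariant case, where $T\subset M$ is a maximal torus. First I check that $X^M = X^T$ as subsets of $X = {\rm Fl}(\O)$: both have cardinality six, since $|X^T| = |W({\rm F}_4)|/|W_M| = 1152/192 = 6$ while $|X^M| = |N_{{\rm F}_4}(M)/M| = |\Sigma_3| = 6$ (by triality, every outer automorphism of ${\rm Spin}(8)$ is realized by ${\rm F}_4$-conjugation), and the inclusion $X^M\subseteq X^T$ holds automatically. By Theorem \ref{mainth}, $H^*({\rm Fl}(\O))$ is concentrated in even degrees, hence $X$ is equivariantly formal for both $T$ and $M$. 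Part (a) then follows by chasing the commutative square whose top row is $\imath^*$ and whose bottom row is the restriction $H^*_T(X)\to H^*_T(X^T)$: the bottom is injective by Borel localization for equivariantly formal torus actions, and both vertical maps $H^*_M(\,\cdot\,)\to H^*_T(\,\cdot\,)$ are injective because $H^*(BT)$ is free over $H^*(BM)$ (Chevalley--Shephard--Todd).

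For (b), I work at the $T$-level and then take $W_M$-invariants. By classical GKM the image of $H^*_T(X)\to H^*_T(X^T)$ is the subring $R_T$ of tuples satisfying divisibility conditions along the $1$-dimensional $T$-orbits; since $W_M$ acts trivially on $X^T=X^M$ and $H^*_M(X)=H^*_T(X)^{W_M}$, the image of $\imath^*$ is exactly $R_T\cap\prod_\sigma H^*(BM)$, viewed inside $\prod_\sigma H^*(BT)$. It remains to translate the $T$-level divisibility into the $M$-level conditions stated in the theorem.

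At each $\sigma\in X^T$, the $12$ $T$-weights on $T_\sigma X$ partition into three $W_M$-orbits corresponding to the three rank-$8$ irreducible representations $V$, $S^+$, $S^-$ of ${\rm Spin}(8)$ that appear in the decomposition $\mathfrak{f}_4 = \mathfrak{so}(8)\oplus V\oplus S^+\oplus S^-$ of the isotropy representation. All four $T$-edges in a given orbit share the same other endpoint, which I identify via triality with $(i,j)\sigma$ for a definite transposition $(i,j)$. A unique-factorization argument in $H^*(BT)$ shows that if $f\in H^*(BM)$ is divisible in $H^*(BT)$ by one weight $\alpha$ of an orbit, then by $W_M$-invariance $w\alpha\,|\,f$ for every $w\in W_M$, hence $f$ is divisible by the product of the positive weights of the orbit --- that is, by the equivariant Euler class of the corresponding rank-$8$ representation at $\sigma$; conversely, this Euler class is divisible by each weight, so the two conditions are equivalent. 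With orientations chosen on $\E_1,\E_2$ so that these three Euler classes equal $\pm(\tb_i-\tb_j)$ for the matching transpositions --- consistent with the identity $\tb_1+\tb_2-\tb_3=0$ built into \eqref{conse} --- the theorem follows. I expect the main obstacle to be this triality-matching: identifying the $N_{{\rm F}_4}(M)/M$-action on $X^M$ with left multiplication by transpositions in $\Sigma_3$, and choosing orientations so that the three rank-$8$ Euler classes at each $\sigma$ come out precisely as $\tb_i-\tb_j$ rather than some other triality-related combination.
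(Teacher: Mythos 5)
Your proposal is correct in outline but follows a genuinely different route from the paper. The paper applies the Harada--Henriques--Holm theorem (Theorem \ref{hhh}) directly to the ${\rm Spin}(8)$-action, using the Schubert cell decomposition (\ref{cwdec})--(\ref{cs}): the work then consists of showing that each cell factor $\exp(\g_\gamma)(\sigma x_0)$ has boundary $\{s_\gamma\sigma x_0\}$ (proved by a rank-one reduction inside ${\rm E}_{6(-26)}$) and that the three rank-$8$ equivariant Euler classes $e_M(V_8)$, $e_M(S_8^+)$, $e_M(S_8^-)$ are pairwise coprime in $H^*(BM)$ (Lemma \ref{lem}, via explicit weight computations). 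You instead descend to the maximal torus $T$, invoke classical GKM there, and recover the $M$-statement by taking $W_M$-invariants, using $H^*_M(X)=H^*_T(X)^{W_M}$ and the fact that $W_M$ acts trivially on $X^T=X^M$. Amusingly, this is exactly the strategy the paper itself adopts for the $K$-theoretic analogue (Theorem \ref{kth}), except that there HHH is still applied at the $T$-level rather than classical GKM. What your route buys is that all divisibility happens in the polynomial ring $H^*(BT)$, where the unique-factorization argument converting ``divisible by each of the four weights of a $W_M$-orbit'' into ``divisible by the rank-$8$ Euler class'' is immediate; what it costs is that you must verify the GKM hypotheses for the $T$-action --- in particular that the twelve isotropy weights at each fixed point (namely $L_i$ and $\tfrac12(\pm L_1\pm L_2\pm L_3\pm L_4)$ with the appropriate sign parities, listed in Subsection \ref{rootspi}) are pairwise linearly independent --- and you must determine the one-skeleton as a multigraph.

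Two points in your sketch are asserted rather than proved, and both are exactly where the paper spends its effort. First, the identification of the second endpoint of each edge: your claim that all four $T$-edges in the $\h_{\gamma_k}$-direction at $\sigma x_0$ terminate at $s_{\gamma_k}\sigma x_0=(i,j)\sigma x_0$ is the analogue of the paper's boundary lemma; in your framework it can be extracted from Propositions \ref{themapsp} and \ref{fixp} (the fiber $p_k^{-1}(p_k(\sigma x_0))$ is a $T$-invariant $8$-sphere with tangent space $\h_{\gamma_k}$ and exactly two fixed points, which are antipodal), but this needs to be said. Second, your fixed-point count $|X^M|=|N_{{\rm F}_4}(M)/M|=6$ via triality is plausible but unreferenced; Proposition \ref{fixp} already gives $X^T=X^M=\Sigma_3 x_0$ and should simply be cited. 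With these two items filled in, and the final bookkeeping $e_M(\h_{\gamma_k})=\pm(\tb_i-\tb_j)$ (which follows from $b_3=b_1+b_2$, i.e.\ Lemma \ref{wehave}, together with (\ref{conse}) and (\ref{sgama})), your argument goes through.
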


This is a precise  description of $H^*_M({\rm Fl}(\O))$, if we take into account that
$$H^*(BM)=\bR[a_1, \tb_1, \tb_2, a_4],$$
see Lemma \ref{lasr}.

The last two theorems are proved in Sections \ref{four} and \ref{five} respectively.
Theorem \ref{princ} relies on a cell decomposition of ${\rm Fl}(\O)$, which is the analogue
of the classical Bruhat decomposition for complex flag manifolds; once we have this,
we simply  apply a result of Harada, Henriques, and Holm, see \cite{Ha-He-Ho}.
The proof of Theorem \ref{main} can be outlined as follows:
first, $e_M(\E_1)$ and $e_M(\E_2)$ generate $H^*_M({\rm Fl}(\O))$ as an
$H^*(BM)$-algebra, roughly because $H^*_M({\rm Fl}(\O))$ is isomorphic to
$H^*({\rm Fl}(\O))\otimes H^*(BM)$ as an $H^*(BM)$-module and
$H^*({\rm Fl}(\O))$ is generated as a ring by $e(\E_1)$ and $e(\E_2)$, see Theorem \ref{mainth};
secondly, one shows that if $f$ is any polynomial in three variables with coefficients
in $H^*(BM)$, then the restriction of the cohomology class
$f(2e_M(\E_1)+e_M(\E_2),-e_M(\E_1)+e_M(\E_2), -(e_M(\E_1)+2e_M(\E_2)))$ to 
an arbitrary $M$-fixed point $\sigma \in \Sigma_3$ is equal to
$g( 2b_1+b_2,-b_1+b_2,-(b_1+2b_2))$, where $g$ is a polynomial obtained from
$f$ by permuting the three variables in a certain way: this, along with the injectivity of
$\imath^*$, explains the relations (\ref{ege1}).

The last main result of the paper concerns the $M$-equivariant
$K$-theory ring of ${\rm Fl}(\O)$.  By the ``equivariant $K$-theory ring" of an 
$M$-space we always  mean
  the Grothendieck group of all topological 
$M$-equivariant complex vector bundles over that space,  with the multiplication  induced by the tensor
product (for more details, we refer  to  \cite{Se}).
To describe this ring  for ${\rm Fl}(\O)$, we need some information about the (complex) representation
ring $R[M]$ of $M$. 
It is known (see for instance \cite{Ad}) that the ring $R[{\rm Spin}(8)]$ is the
polynomial ring generated over $\bZ$ by $X_1, X_2, X_3, X_4$, which are
as follows: the  canonical representation 
of ${\rm SO}(8)$ on $\bC^8$ composed with the covering ${\rm Spin}(8)\to {\rm SO}(8)$,
the two complex half-{spin} representations, 
and the complexified adjoint action. Our result is a Goresky-Kottwitz-MacPherson type description of the ring
$K_M({\rm Fl}(\O))$.       

\begin{theorem}\label{kth}
The canonical 
homomorphism
$$K_M({\rm Fl}(\O))\to K_M(\Sigma_3)
=\prod_{\sigma\in \Sigma_3}R[M]= \prod_{\sigma\in \Sigma_3}\bZ[X_1,X_2,X_3,X_4]$$
induced by the inclusion
$\Sigma_3={\rm Fl}(\O)^M\hookrightarrow {\rm Fl}(\O)$
is injective. 
Its image consists of all
$ (f_\sigma) \in \prod_{\sigma \in \Sigma_3} \mathbb{Z} [X_1, X_2, X_3, X_4 ] $  such that 
$f_\sigma-f_{(i,j)\sigma}$ is a multiple of $X_i-X_j$,
for all $\sigma\in \Sigma_3$ and all 
$i ,j$ with $1\le i<j\le 3$.
Here $(1,2)$, $(2,3)$, and $(1,3)$ have the same meaning as in Theorem \ref{princ}.
\end{theorem}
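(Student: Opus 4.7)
My plan parallels that of Theorem \ref{princ} in the $K$-theoretic setting. The key structural inputs are (i) the $M$-invariant cell decomposition of ${\rm Fl}(\O)$ into six even-dimensional cells indexed by $\Sigma_3$, already used in the proof of Theorem \ref{princ}, and (ii) the $F_4/\mathrm{Spin}(8)$-geometry of the $M$-invariant $\O {\rm P}^1$ subvarieties joining transposition-related fixed points.

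The first step is to establish injectivity of the restriction map. Filter ${\rm Fl}(\O)$ by the closures of cells. Each successive quotient is the $M$-equivariant Thom space of an $M$-representation bundle over a single fixed point, so by the equivariant Thom isomorphism its reduced $K_M$ is $R[M]$ concentrated in even degree. Because all cells are even-dimensional, the long exact sequences in $M$-equivariant $K$-theory degenerate into split short exact sequences of $R[M]$-modules. Iterating, $K_M({\rm Fl}(\O))$ is a free $R[M]$-module of rank $6 = |\Sigma_3|$ and the restriction to $\Sigma_3 = {\rm Fl}(\O)^M$ is injective.

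Next I would analyze the image. The tangent space at the identity coset is $\mathfrak{f}_4/\mathfrak{spin}(8)$, of real dimension $24$; after complexification it decomposes as a $\mathrm{Spin}(8)$-representation into the three inequivalent $8$-dimensional irreducibles, i.e.\ into $X_1, X_2, X_3$, and the $\Sigma_3$-action on the fixed-point set permutes these summands through triality, so the tangent representation at $\sigma$ is $X_{\sigma(1)}+X_{\sigma(2)}+X_{\sigma(3)}$. For each transposition $(i,j)\in\Sigma_3$, the fixed points $\sigma$ and $(i,j)\sigma$ lie on a unique $M$-invariant $\O{\rm P}^1 \cong S^8$ with tangent representations $X_i$ and $X_j$ at the two poles. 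Computing $K_M(\O{\rm P}^1)$ as an $R[M]$-subalgebra of $R[M]\times R[M]$ (via $M$-equivariant Mayer-Vietoris for the two complementary invariant disks) then forces $f_\sigma - f_{(i,j)\sigma}\in (X_i - X_j)R[M]$. Hence the image is contained in the proposed subring $\mathcal{R}$.

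To close the proof I would show that $\mathcal{R}$ is itself a free $R[M]$-module of rank $6$, by exhibiting a basis indexed by $\Sigma_3$ in the spirit of a Kostant-Kumar / double Schubert basis. Since $K_M({\rm Fl}(\O))\hookrightarrow \mathcal{R}$ is then an inclusion of free $R[M]$-modules of the same rank which becomes an isomorphism after inverting the non-zero-divisors $X_i-X_j$, a leading-term comparison along a total order refining the Bruhat order yields equality. The main technical obstacle will be the explicit identification of the $M$-equivariant $\O{\rm P}^1$'s joining transposition-related fixed points and the verification that their two tangent representations are precisely $X_i$ and $X_j$: this requires careful use of the $F_4/\mathrm{Spin}(8)$ structure and is the non-abelian analogue of writing down the GKM graph.
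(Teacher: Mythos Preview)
Your overall strategy differs from the paper's: you work directly in $M$-equivariant $K$-theory, whereas the paper passes through $T$-equivariant $K$-theory (applying the Harada--Henriques--Holm theorem to the maximal torus action) and then takes $W_{{\rm Spin}(8)}$-invariants via $K_M({\rm Fl}(\O)) \cong K_T({\rm Fl}(\O))^{W_{{\rm Spin}(8)}}$. This detour is not cosmetic. The cells $C_\sigma$ carry no $M$-equivariant complex structure, because the $M$-representations $V_8$, $S_8^\pm$ are of real type; hence your Thom-isomorphism step and, more generally, the HHH framework with its $K$-theoretic Euler classes do not apply directly to the $M$-action without invoking equivariant spin structures and Atiyah--Bott--Shapiro, which you do not mention. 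By contrast, the cells \emph{do} admit $T$-equivariant complex structures (Proposition~\ref{eac}), which is exactly why the paper works $T$-equivariantly first.

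There is also a concrete geometric error. You assert that the $\O{\rm P}^1$ joining $\sigma$ and $(i,j)\sigma$ has tangent representations $X_i$ and $X_j$ at its two poles. In fact the tangent representation is the \emph{same} at both poles, namely $\h_{\gamma_k}$ (complexified to $X_k$) where $\{k\}=\{1,2,3\}\setminus\{i,j\}$; see Equation~(\ref{sgama}) and compare Lemma~\ref{ther}. A corrected Mayer--Vietoris on this sphere would identify the image in $R[M]\times R[M]$ with pairs agreeing modulo the kernel of the restriction $R[{\rm Spin}(8)]\to R[{\rm Spin}(7)_k]$, where ${\rm Spin}(7)_k$ is the stabilizer of a nonzero vector in $\h_{\gamma_k}$; that this kernel is exactly $(X_i-X_j)$ is the representation-theoretic computation you are missing. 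The paper reaches the same conclusion by a different route: an explicit factorization in $R[T]$, for instance $X_2-X_3=y_5\cdot e_T^K(V_8)$ with $y_5$ a unit, so that the $T$-equivariant divisibility condition from HHH translates directly into divisibility by $X_i-X_j$. This also makes your separate rank/surjectivity argument unnecessary, since HHH (for $T$) already characterizes the image exactly and taking Weyl invariants finishes the job.
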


A proof can be found in Subsection \ref{lastsec}. The main tool is again the theorem of Harada, Henriques, and Holm mentioned above.

\begin{rem} The omnipresence of the symmetric group $\Sigma_3$  in the above descriptions is not
surprising if we take into accout that ${\rm Fl}(\O)$ is diffeomorphic to the homogeneous
space ${\rm F}_4/{\rm Spin}(8)$.
It is well known that many geometric properties of ${\rm Spin}(8)$ and
${\rm F}_4$ involve $\Sigma_3$-symmetry. The generic term one uses for such phenomena is ``triality",  see \cite{Ad0}, 
\cite[Chapters 5 and 14]{Ad},  \cite{Ba}, and the references therein. For example, a result
that goes back to \'E.~Cartan in the 1920s says that the group of outer automorphisms of ${\rm Spin}(8)$ is isomorphic to $\Sigma_3$ and it acts on the ${\rm Spin}(8)$-modules
$V_8$, $S_8^+$, and $S_8^-$ by permuting them.
These representations of ${\rm Spin}(8)$ are also important for us here, as follows.
First, they induce the complex representations $X_1$, $X_2$, and $X_3$ which appear in Theorem \ref{kth}.
(Interesting enough, $X_4$, which is the adjoint representation of ${\rm Spin}(8)$, has no relevance in Theorem \ref{kth} and can practically be neglected.)   In the same spirit, it will turn out that the elements
$\tb_i-\tb_j$ of $H^*(B{\rm Spin}(8))$ we are using in Theorem \ref{princ} are the ${\rm Spin}(8)$-equivariant
Euler classes of $V_8$, $S_8^+$, and $S_8^-$, regarded as ${\rm Spin}(8)$-equivariant vector bundles over a point and equipped with appropriate orientations. The vector bundles $\E_1$ and $\E_2$ in Theorems \ref{mainth} and \ref{main}
are induced by $V_8$ and $S_8^+$, respectively, in a way which is described in Section \ref{rfm}.
We will see there that in the same way, to $S_8^-$ corresponds a third vector bundle, 
${\mathcal E}_3$, which we can use in order to bring even more $\Sigma_3$-symmetry 
into our first two theorems: this is explained in detail in Remarks \ref{tria1} and \ref{tria2}.       
\end{rem}  

\medskip

\begin{rem}\label{gener} The space ${\rm Fl}(\O)$ is a generalized real flag manifolds. By definition, such a manifold is an 
 orbit of the isotropy representation of a Riemannian symmetric space
 (for more details, see Appendix \ref{lasts1}).
The cohomology ring of the principal orbits of these representations was computed by Hsiang, Palais, and Terng in \cite{Hs-Pa-Te}. An important class of such manifolds consists of those
with uniform multiplicity 2, 4, or 8: these  are the principal adjoint orbits of compact Lie
groups,  the quaternionic flag manifold ${\rm Fl}_n(\H)$, and ${\rm Fl}(\O)$ respectively.
 The descriptions given in \cite{Hs-Pa-Te} show that the cohomology ring of each of these spaces is expressed by a Borel type formula, that is, it is isomorphic to the coinvariant ring of a certain Weyl group,
 see \cite{Bo0}.
The  spaces ${\rm Fl}_n(\H)$ and ${\rm Fl}(\O)$ admit natural group actions similar to the 
action of a maximal torus on an adjoint orbit (e.g., for ${\rm Fl}(\O)$ this group is
${\rm Spin}(8)$, see above). The equivariant cohomology and equivariant $K$-theory
of a principal adjoint orbit with the action of a maximal torus is well understood (see for example  \cite{Kos-Ku}). 
A natural goal is to decide whether ${\rm Fl}_n(\H)$ and ${\rm Fl}(\O)$ 
behave like adjoint orbits also in the equivariant setting.
Positive answers have been given for ${\rm Fl}_n(\H)$ 
from the point of view of equivariant cohomology  (see \cite{Ma2}) and  equivariant
$K$-theory (see \cite{Ma-Wi}). In this paper we discuss the remaining space,
which is  ${\rm Fl}(\O)$.    
\end{rem}

\medskip

\noindent{\bf Acknowledgement.} We would like to thank the referee for several valuable suggestions.

\newpage

\section{The octonionic flag manifold}\label{two}

The goal of this section is to define the flag manifold ${\rm Fl}(\O)$ 
and discuss some of its basic properties.
For reader's convenience we have included an appendix (see Appendix
\ref{lasts}) where the complex flag manifold ${\rm Fl}_3(\bC)$ is
discussed in a way appropriate to serve us as a model here.  

\subsection{${\rm Fl}(\O)$ via the Jordan algebra $(\h_3(\O),\circ)$}
We first recall  that, by definition, the space $\O$
has a basis consisting of the elements $e_1=1$, $e_2, \ldots, e_8$;
they satisfy certain multiplication rules which make $\O$ into
a non-associative algebra with  division (for more details, see \cite[Section 2]{Ba}).
Let $$p=x_1+x_2e_2+\ldots +x_8e_8$$ be an element of $\O$,
where $x_1,x_2,\ldots, x_8\in \bR$.
We define its real part,
$${\rm Re}(p):=x_1,$$
 its conjugate,
$$\overline{p}:=x_1-x_2e_2-\ldots -x_8e_8,$$
as well as its norm, $|p|$, given by
$$|p|^2:=p\cdot \overline{p}=x_1^2+x_2^2+\ldots +x_8^2.$$
Let us consider  
$$\h_3(\O):=
\left\{ \left(
\begin{array}{lll} x_1 & p & q\\
\bar{p} & x_2 & r\\
\bar{q}&\bar{r}&x_3
\end{array}
\right)
 \mid 
 p,q,r\in \O, \ x_1,x_2,x_3\in \bR
\right\},$$
the space of  all $3\times 3$ Hermitian matrices with entries in 
$\O$. 

\begin{definition}\label{defin}
(a) The {\rm octonionic projective plane }
$\O {\rm P}^2$ is the set of all matrices
$a\in \h_3(\O)$ with
$$a^2=a \ {\it and } \ {\rm tr}(a)=1.$$

(b) The {\rm octonionic flag manifold} 
${\rm Fl}(\O)$ is the set of all pairs $(a,b)\in \O {\rm P}^2\times \O {\rm P}^2$
with
$${\rm Re}({\rm tr}(ab))=0.$$
In the language of incidence geometry, this condition says
that the ``point" $a$ and the ``line" $b$ are ``incident" (see for instance
\cite[Section 7.2]{Fr}).  
\end{definition}
 
We equip $\h_3(\O)$  with the $\bR$-linear product\footnote{The pair
$(\h_3(\O),\circ)$ is actually a  Jordan algebra (see \cite{Ba} and \cite{Fr}).} given by
\begin{equation}\label{proda}a\circ b :=\frac{1}{2}(ab+ba),\end{equation}
for all $a,b\in \h_3(\O)$. 

\begin{definition}\label{gpf} The group ${\rm F}_4$ consists of all 
$\bR$-linear transformations $g$ of $\h_3(\O)$ 
such that
$$g.(a\circ b) =(g.a)\circ (g.b),$$
for all $a,b\in \h_3(\O)$.
\end{definition}

The following is a list of properties of the group ${\rm F}_4$ which will be needed
later. The details can be found for instance
in \cite{Fr}, \cite{Mu}, and \cite{Ad}.

\begin{itemize}
\item The group ${\rm F}_4$ is a compact, connected, simply connected Lie group
whose Lie algebra is the compact real form of the complex simple
Lie algebra of type ${\rm F}_4$.
\item  For any $a\in \h_3(\O)$ there exist $g\in {\rm F}_4$ and $x_1,x_2,x_3 \in \bR$
such that  
$$x_1\ge x_2 \ge x_3$$
and 
$$g.a
=
\left(
\begin{array}{llll}
x_1& 0 & 0\\
0 & x_2 &0\\
0&0&x_3
\end{array}
\right).$$
The numbers $x_1,x_2,x_3$ are uniquely determined by
$a$.
\item
We have
\begin{equation}\label{traces}{\rm tr}(g.a)={\rm tr} (a),\end{equation}
for all $g\in {\rm F}_4$ and all $a\in \h_3(\O)$.
\item We have
\begin{equation}\label{gii}g.I=I,\end{equation}
for any $g\in {\rm F}_4$. Here
$I$ denotes the diagonal matrix ${\rm Diag}(1,1,1)$.
\item Denote by $\d\cong\bR^3$ the space of all diagonal matrices in $\h_3(\O)$.  
We have 
\begin{equation}\label{fix}
\{g\in {\rm F}_4 \mid g.x = x \ {\rm for \ all \ } x\in \d\}\cong {\rm Spin}(8)
\end{equation}

\item
The space $\O {\rm P}^2$ is the ${\rm F}_4$-orbit of
$$d_1:=
\left(
\begin{array}{llll}
1& 0 & 0\\
0 & 0 &0\\
0&0&0
\end{array}
\right).$$
The stabilizer of $d_1$ is isomorphic to
the Lie group ${\rm Spin}(9)$.
Thus, we have the identification
$$\O {\rm P}^2 = {\rm F}_4/{\rm Spin}(9).$$ 
\end{itemize}  

We also have the following description of ${\rm Fl}(\O)$.

\begin{propo}\label{lal}  The (diagonal) action of ${\rm F}_4$ on 
${\rm Fl}(\O)$ is transitive.
If
$$d_2:=
\left(
\begin{array}{llll}
0& 0 & 0\\
0 & 1 &0\\
0&0&0
\end{array}
\right) 
$$
then the stabilizer of  $(d_1,d_2)$   is isomorphic to the group ${\rm Spin}(8)$
given by Equation (\ref{fix}). 
Thus, we have the identification
$${\rm Fl}( \O)
={\rm F}_4/{\rm Spin}(8).$$
\end{propo}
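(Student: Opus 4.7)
The plan is to handle the stabilizer computation first, which is purely algebraic, and then use it to complete a dimension-count argument for transitivity.

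First I would show that the stabilizer of $(d_1,d_2)$ in ${\rm F}_4$ is exactly the ${\rm Spin}(8)$ of (\ref{fix}). If $g\in {\rm F}_4$ fixes both $d_1$ and $d_2$, then by (\ref{gii}) the matrix $g$ also fixes $d_3:=I-d_1-d_2$. Since $d_1,d_2,d_3$ form an $\bR$-basis of $\d$ and $g$ is $\bR$-linear, $g$ fixes $\d$ pointwise; hence $g\in {\rm Spin}(8)$ by (\ref{fix}). The reverse inclusion is immediate, as ${\rm Spin}(8)$ is defined to fix every diagonal matrix.

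For transitivity, given $(a,b)\in {\rm Fl}(\O)$, I would first use the transitivity of ${\rm F}_4$ on $\O {\rm P}^2$ (last bullet above) to reduce to the case $a=d_1$. A direct matrix computation gives ${\rm tr}(d_1 b)=b_{11}$, and since $b_{11}\in \bR$ the incidence relation forces $b_{11}=0$. Then the $(1,1)$-entry of $b^2$ is $\sum_j b_{1j}\overline{b_{1j}}=|b_{12}|^2+|b_{13}|^2$, which the equation $b^2=b$ equates to $0$; hence the first row and column of $b$ vanish. Thus $b$ is a rank-one projection of trace $1$ supported in the lower-right $2\times 2$ block, i.e., $b$ lies in the naturally embedded $\O {\rm P}^1\subset \O {\rm P}^2$.

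It remains to prove that ${\rm Spin}(9)$, the ${\rm F}_4$-stabilizer of $d_1$, acts transitively on this $\O {\rm P}^1\cong S^8$. The stabilizer of $d_2$ inside ${\rm Spin}(9)$ coincides with the stabilizer of $(d_1,d_2)$ in ${\rm F}_4$, which I just identified as ${\rm Spin}(8)$. A dimension count $\dim {\rm Spin}(9)-\dim {\rm Spin}(8)=36-28=8=\dim\O {\rm P}^1$ then shows that the ${\rm Spin}(9)$-orbit of $d_2$ is open in $\O {\rm P}^1$; being the continuous image of the compact group ${\rm Spin}(9)$ it is also closed, and since $\O {\rm P}^1$ is connected the orbit fills $\O {\rm P}^1$. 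This dimension-count step is the main obstacle, and the order of the argument (stabilizer first, transitivity second) is important to avoid circularity. Once transitivity is in hand, the final identification ${\rm Fl}(\O)={\rm F}_4/{\rm Spin}(8)$ is the standard orbit-stabilizer statement for a continuous transitive action of a compact Lie group.
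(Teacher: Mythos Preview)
Your argument is correct. The stabilizer computation is identical to the paper's: both observe that fixing $d_1$ and $d_2$ forces, via $g.I=I$, fixing all of $\d$, and then invoke (\ref{fix}).

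For transitivity the two approaches diverge. The paper simply cites \cite[Sections 7.2 and 7.6]{Fr} and moves on. You instead give a self-contained argument: reduce to $a=d_1$, show by the computation $(b^2)_{11}=|b_{12}|^2+|b_{13}|^2=b_{11}=0$ that $b$ lies in the embedded $\O{\rm P}^1$ (this is in fact the same calculation the paper carries out later, in the proof of Proposition \ref{flot}), and then run an open--closed dimension count using the stabilizer identification you already established. Your approach has the advantage of being independent of the external reference and of making the role of the inclusion ${\rm Spin}(8)\subset {\rm Spin}(9)$ explicit; the paper's approach is of course shorter. Note that your care about ordering (stabilizer first, then transitivity) is exactly what makes the dimension count non-circular, since the identification of the isotropy of $d_2$ inside ${\rm Spin}(9)$ with ${\rm Spin}(8)$ is needed before you know the orbit has the right dimension.
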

\begin{proof}
The transitivity of the ${\rm F}_4$-action follows from \cite[Sections 7.2 and 7.6]{Fr}.
The second assertion follows from the fact that $g\in {\rm F}_4$ fixes $\d$ pointwise
if and only if it fixes $d_1$ and $d_2$ (by Equation (\ref{gii})).
\end{proof}

Let us now consider the maps $\pi_1,\pi_2 : {\rm Fl}(\O)\to \O {\rm P}^2$, given by
$$\pi_1(a,b):=a \quad {\rm and } \quad \pi_2(a,b):=b,$$
for all $(a,b)\in {\rm Fl}(\O)$. From the previous considerations we deduce that
they are both ${\rm F}_4$-equivariant maps.

 \begin{propo}\label{flot} 
 The maps $\pi_1$ and $\pi_2$ 
are $\O {\rm P}^1$-bundles. Here, in analogy with
Definition \ref{defin} (a), $\O {\rm P}^1$ (the octonionic projective line) 
is the space of all 
idempotent elements of $\h_2(\O)$ with trace equal to 1.
\end{propo}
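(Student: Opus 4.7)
The plan is to exhibit $\pi_1$ as the natural projection between two homogeneous spaces of ${\rm F}_4$, and then to identify its fibre with $\O\mathrm{P}^1$ by a direct matrix computation; the statement for $\pi_2$ will follow by symmetry. Combining Proposition \ref{lal} with the identification $\O\mathrm{P}^2={\rm F}_4/{\rm Spin}(9)$, in which ${\rm Spin}(9)$ is the ${\rm F}_4$-stabilizer of $d_1$, one has ${\rm Spin}(8)\subset {\rm Spin}(9)$ and $\pi_1$ is the canonical projection ${\rm F}_4/{\rm Spin}(8)\to {\rm F}_4/{\rm Spin}(9)$. Since the principal bundle ${\rm F}_4\to {\rm F}_4/{\rm Spin}(9)$ is locally trivial, so is its associated bundle with fibre ${\rm Spin}(9)/{\rm Spin}(8)$, which is precisely $\pi_1$; this already shows that $\pi_1$ is a locally trivial fibre bundle with typical fibre ${\rm Spin}(9)/{\rm Spin}(8)$.

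The next step is to identify $\pi_1^{-1}(d_1)$ with $\O\mathrm{P}^1$. Writing a general $b\in \h_3(\O)$ as in Definition \ref{defin}, a direct expansion yields ${\rm tr}(d_1 b)=b_{11}\in \bR$, so the incidence condition ${\rm Re}({\rm tr}(d_1 b))=0$ is simply $b_{11}=0$. The $(1,1)$-entry of the relation $b^2=b$ then reads $|b_{12}|^2+|b_{13}|^2=0$, which forces $b_{12}=b_{13}=0$. Hence $b$ is block-diagonal with a single $0$ in position $(1,1)$ and a $2\times 2$ Hermitian octonionic block $b'$ that is idempotent and of trace $1$, i.e.\ $b'\in \O\mathrm{P}^1$. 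The assignment $b\mapsto b'$ is the desired bijection $\pi_1^{-1}(d_1)\simeq \O\mathrm{P}^1$, with inverse the padding map $b'\mapsto \operatorname{diag}(0,b')$ embedding $\O\mathrm{P}^1$ into $\O\mathrm{P}^2$ as a smooth submanifold.

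For $\pi_2$ I would invoke the smooth involution $(a,b)\mapsto (b,a)$ of $\O\mathrm{P}^2\times \O\mathrm{P}^2$, which preserves ${\rm Fl}(\O)$ because the incidence relation is symmetric in $a$ and $b$, and which intertwines $\pi_1$ with $\pi_2$; alternatively, the same calculation can be carried out verbatim with $d_2$ in place of $d_1$, replacing the $(1,1)$-entry by the $(2,2)$-entry throughout.

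The only step requiring some care, and hence the main (mild) obstacle, is the verification that the set-theoretic bijection $\pi_1^{-1}(d_1)\simeq \O\mathrm{P}^1$ is a diffeomorphism, compatible with the manifold structure $\pi_1^{-1}(d_1)\cong {\rm Spin}(9)/{\rm Spin}(8)$ coming from the first step. This reduces to checking that the padding map $\h_2(\O)\hookrightarrow \h_3(\O)$ restricts to a smooth embedding $\O\mathrm{P}^1\hookrightarrow \O\mathrm{P}^2$, which is routine.
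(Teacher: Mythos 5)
Your proof is correct and follows essentially the same route as the paper: reduce by ${\rm F}_4$-equivariance (equivalently, the homogeneous fibration ${\rm F}_4/{\rm Spin}(8)\to{\rm F}_4/{\rm Spin}(9)$) to identifying the single fibre $\pi_1^{-1}(d_1)$, and then show via the incidence condition and the $(1,1)$-entry of $b^2=b$ that this fibre is the copy of $\O\mathrm{P}^1$ sitting inside $\{0\}\times\h_2(\O)$. Your explicit justification that $b_{12}=b_{13}=0$ and your handling of $\pi_2$ via the swap involution only make explicit what the paper leaves to the reader.
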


\begin{proof} 
We show that $\pi_1$ is an $\O {\rm P}^1$-bundle.
Since $\pi_1$ is ${\rm F}_4$-equivariant, it is sufficient to prove that $\pi_1^{-1}(d_1)=\O {\rm P}^1$
(because then, for any $g\in {\rm F}_4$ we have
$\pi_1^{-1}(g.d_1)=g.\O {\rm P}^1$).
Indeed, the elements of $\pi_1^{-1}(d_1)$ are of the form 
$(d_1,a)$, where $a\in \O {\rm P}^2$ is such that
$${\rm tr} (ad_1)=0.$$
The last equation  and the fact that
$a^2=a$ imply that
$$
a=
\left(
\begin{array}{llll}
0& 0 & 0\\
0 & x_2 &r\\
0&\bar{r}&x_3
\end{array}
\right)
$$
for $x_2,x_3\in \bR$ and $r\in \O$. 
The set of all such $a$ with $a^2=a$ and ${\rm tr}(a)=1$
 is the subspace $\O {\rm P}^1$ of $\{0\}\times \h_2(\O)$
(the latter being canonically embedded in $\h_3(\O)$).
 This finishes the proof.
 \end{proof}

\subsection{${\rm Fl}(\O)$ as a real flag manifold}\label{rfm}
Let $\h_3^0(\O)$ be the space of
all elements of $\h_3(\O)$ with trace equal to 0. 
The representation of ${\rm F}_4$ on the space
$\h_3(\O)$ mentioned in the previous subsection leaves
$\h_3^0$ invariant, see (\ref{traces}).
The main point of this subsection is that the induced representation of ${\rm F}_4$ on $\h_3^0(\O)$ is just the isotropy
representation of the (non-compact) Riemannian symmetric space ${\rm E}_{6(-26)}/{\rm F}_4$.
Here ${\rm E}_{6(-26)}$ is a certain non-compact real simple Lie group  whose
Lie algebra $\eg_{6(-26)}$ is a real form of the simple complex Lie algebra of type
${\rm E}_6$ (see \cite[Table V, Section 6, Ch.~X]{He}).  
Appendix \ref{lasts2} contains more details about this. We extract from there the
relevant information, as follows.
We have the Cartan decomposition\footnote{This also explains the subscript $-26$
from $\eg_{6(-26)}$. It is the signature of the Killing form of this Lie algebra. 
This form is negative definite
on $\fg_4$ (of dimension 52) and positive definite on $\h_3^0(\O)$ (of dimension
26).}  
\begin{equation}\label{card}\eg_{6(-26)}=\fg_4 \oplus \h^0_3(\O)\end{equation}
where $\fg_4$ is the Lie algebra of ${\rm F}_4$ and $\h_3^0(\O)$ the space of
all elements of $\h_3(\O)$ with trace equal to 0. We denote by  $\d^0$
the space  of all elements of $\d$ with trace equal to 0.
It is a maximal abelian subspace of $\h_3^0(\O)$.  
Let us also consider the 
following subspaces of $\h_3(\O)$:
$$\h_{\gamma_1}:=
\left\{
\left(
\begin{array}{llll}
0 & 0 & 0\\
0 & 0 & r\\
0 & \bar{r} & 0
\end{array}
\right) \mid r\in \O
\right\},
$$  
$$\h_{\gamma_2}:=
\left\{
\left(
\begin{array}{llll}
0 & 0 & q\\
0 & 0 & 0\\
\bar{q} & 0 & 0
\end{array}
\right) \mid q\in \O
\right\},
$$  
and
$$\h_{\gamma_3}:=
\left\{
\left(
\begin{array}{llll}
0 & p & 0\\
\bar{p} & 0 & 0\\
0 & 0 & 0
\end{array}
\right) \mid p\in \O
\right\}.
$$  
 We have the obvious decomposition
$$\h^0_3(\O)=\d^0\oplus \h_{\gamma_1}\oplus \h_{\gamma_2}\oplus \h_{\gamma_3}.$$
The spaces $\h_{\gamma_k}$ are in fact root spaces, in the sense that we have
\begin{equation}\label{hb}\h_{\gamma_k}
=\{a\in \h_3(\O) \mid 
[x,[x,a]]=\gamma_k(x)^2a\  {\rm for \ all \ } x\in \d^0\},\end{equation}
 $k=1,2,3$. 
 Here the bracket $[ \ ,  \ ]$ is the usual commutator of
 matrices and 
 $\gamma_1,\gamma_2,\gamma_3:\d^0 \to \bR$ are described by
\begin{align}\label{gam}{}&\gamma_1(x_1,x_2,x_3):=x_3-x_2,\nonumber\\
{}& \gamma_2(x_1,x_2,x_3):=x_1-x_3,\\
{}& \gamma_3(x_1,x_2,x_3):=x_1-x_2,\nonumber
\end{align}
where $(x_1,x_2,x_3)$ stands for ${\rm Diag}(x_1,x_2,x_3)$,
for any $x_1,x_2,x_3\in \bR$ with $x_1+x_2+x_3=0$
(for more details concerning Equation (\ref{hb}), see Appendix \ref{lasts2}).
The elements of $\Phi:=\{\pm \gamma_1,\pm\gamma_2,\pm\gamma_3\}$
are the roots\footnote{Strictly speaking, the roots are
$\pm\frac{1}{2}( x_3-x_2), \pm \frac{1}{2}( x_1-x_3) $,
and $\pm\frac{1}{2}( x_1-x_2)$ (see the end of Appendix \ref{lasts2}).}
of ${\rm E}_{6(-26)}/{\rm F}_4$ with respect to $\d^0$. We also consider the
subsets 
$$\Phi^+=\{ \gamma_1,\gamma_2,\gamma_3\} \quad {\rm and } \quad 
\Phi^-=\{- \gamma_1,-\gamma_2,-\gamma_3\}$$
of $\Phi$. They are the positive, respectively negative roots relative to the simple roots
$\gamma_1$ and $\gamma_2$. 
The following proposition concerns the action of ${\rm F}_4$ on $\h_3^0(\O)$ mentioned above.

\begin{propo}\label{x0123}
Take $x_0={\rm Diag}(x_1^0,x_2^0,x_3^0)\in \d^0$ 
such that  $x_1^0,x_2^0,$ and $x_3^0$ are any two different.
 Then
the ${\rm F}_4$-stabilizer of $x_0$ is the group ${\rm Spin}(8)$ in Proposition \ref{lal}.
One identifies in this way
\begin{equation}\label{fii}{\rm Fl}(\O) = {\rm F}_4.x_0.\end{equation}
\end{propo}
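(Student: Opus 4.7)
My plan is to identify the ${\rm F}_4$-stabilizer $H := \{g\in {\rm F}_4 \mid g.x_0 = x_0\}$ with the subgroup ${\rm Spin}(8)$ described in (\ref{fix}), and then derive the orbit description ${\rm Fl}(\O) = {\rm F}_4.x_0$ from the orbit-stabilizer theorem together with Proposition \ref{lal}.

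The inclusion ${\rm Spin}(8) \subseteq H$ is immediate from (\ref{fix}), since any element of that subgroup fixes all of $\d$ pointwise and $x_0 \in \d^0 \subset \d$. For the reverse inclusion, the idea is to recover the three primitive diagonal idempotents $d_1, d_2, d_3$ (where $d_3 := {\rm Diag}(0,0,1)$) from $x_0$ by polynomial operations in the Jordan algebra $(\h_3(\O), \circ)$. Since $x_0$ commutes with itself, we have $x_0 \circ x_0 = x_0^2$, and the three relations
$$I = d_1+d_2+d_3, \quad x_0 = \sum_{i=1}^3 x_i^0 d_i, \quad x_0\circ x_0 = \sum_{i=1}^3 (x_i^0)^2 d_i$$
form a Vandermonde system. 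The hypothesis that $x_1^0, x_2^0, x_3^0$ are pairwise distinct means this system can be inverted over $\bR$, expressing each $d_i$ as an $\bR$-linear combination of $I$, $x_0$, and $x_0\circ x_0$. Now any $g \in H$ preserves $\circ$ (Definition \ref{gpf}), fixes $I$ by (\ref{gii}), and fixes $x_0$ by assumption; therefore $g$ fixes each $d_i$. In particular $g$ fixes the pair $(d_1, d_2)$, and Proposition \ref{lal} then places $g$ in ${\rm Spin}(8)$, proving $H \subseteq {\rm Spin}(8)$.

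Once $H = {\rm Spin}(8)$ is established, the orbit ${\rm F}_4.x_0$ is ${\rm F}_4$-equivariantly diffeomorphic to ${\rm F}_4/{\rm Spin}(8)$, which is ${\rm Fl}(\O)$ by Proposition \ref{lal}; this gives (\ref{fii}). I do not anticipate a serious obstacle: the only delicate point is the Jordan-algebraic ``spectral recovery'' of the $d_i$ from $x_0$, and this reduces entirely to Vandermonde invertibility, which is exactly where the distinctness hypothesis on the $x_i^0$ enters. Without that distinctness assumption the stabilizer would be strictly larger (reflecting the fact that $x_0$ would lie on a wall of the Weyl chamber in $\d^0$), so this hypothesis is also essential at the level of the conclusion.
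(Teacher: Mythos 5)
Your proof is correct, but it follows a genuinely different route from the paper's. The paper deduces the statement from the general theory of real flag manifolds: it invokes Proposition \ref{x00} of Appendix B (the $K$-stabilizer of a regular element of $\a$ equals the centralizer $M$ of $\a$), whose proof in turn rests on the Duistermaat--Kolk--Varadarajan result $K_{x_0}=MK_{x_0}^0$ together with a root-space computation; it then concludes via (\ref{gii}) and (\ref{fix}). You instead give a self-contained Jordan-algebraic argument: the Vandermonde system in $I$, $x_0$, $x_0\circ x_0$ recovers the primitive idempotents $d_1,d_2,d_3$ as real linear combinations, so any $g\in{\rm F}_4$ fixing $x_0$ (and hence $x_0\circ x_0$, since $g$ preserves $\circ$) and fixing $I$ must fix $(d_1,d_2)$, and Proposition \ref{lal} finishes the job. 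Your approach has the virtue of being elementary and of avoiding the one genuinely non-trivial input in the paper's route, namely the control of the component group of the stabilizer (a Lie-algebra argument alone would only identify its identity component); it also makes transparent exactly where pairwise distinctness of the $x_i^0$ enters. What it gives up is generality: the paper's argument applies verbatim to any regular isotropy orbit of a symmetric space, whereas yours is tied to the Jordan-algebra model of this particular example. One small point worth recording explicitly if you write this up: $x_0\circ x_0=x_0^2$ holds for any element (not just because $x_0$ ``commutes with itself''), and for the diagonal matrix $x_0$ there is no non-associativity issue in computing $x_0^2=\sum_i (x_i^0)^2 d_i$.
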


\begin{proof} An element $g\in {\rm F}_4$ leaves $x_0$ fixed if and only if
it leaves the entire $\d^0$ pointwise fixed (see Proposition \ref{x00}).   
By Equation (\ref{gii}) this is the same as saying that $g$ leaves 
$\d$ pointwise fixed. By Equation (\ref{fix}), this is equivalent to
$g\in {\rm Spin}(8)$.
\end{proof}

Consequently  ${\rm Fl}(\O)$ is a real {\rm fl}ag manifold (see Appendix \ref{lasts1} for more on this  notion). 
We deduce from this that the root spaces $\h_{\gamma_1},
 \h_{\gamma_2},$ and $\h_{\gamma_3}$ are ${\rm Spin}(8)$-invariant.
 In fact, the
 corresponding representations can be described explicitly as follows (see [Ba, p.~179]):
  \begin{itemize}
 \item $\h_{\gamma_1} = V_8$, the standard  (matrix) representation of ${\rm SO}(8)$
 on $\bR^8$, composed with the covering map $\pi: {\rm Spin}(8)\to {\rm SO}(8)$
 \item $\h_{\gamma_2} = S_8^+$
 \item $\h_{\gamma_3} = S_8^-$,
 \end{itemize}
 where $S_8^{\pm}$ are the two real half-{spin} 
 representations of ${\rm Spin}(8)$.

The Weyl group of ${\rm E}_{6(-26)}/{\rm F}_4$ with respect to $\d^0$ is 
\begin{equation}\label{weylg}W:=\{n\in {\rm F}_4 \mid n.\d^0\subset \d^0\}/{\rm Spin}(8).\end{equation}
The obvious action of this group  on $\d^0$ is faithful.
The corresponding group of transformations of $\d_0$ 
is generated by the reflections of $\d^0=\{(x_1,x_2,x_3)\in \bR^3 \mid 
x_1+x_2+x_3=0\}$ about the lines $\ker\gamma_1,
\ker \gamma_2$, and $\ker \gamma_3$ respectively. Thus, $W$ 
can be identified  with the symmetric group $\Sigma_3$
which acts   on  $\d^0$ by  permuting  the coordinates $x_1,x_2,x_3$.
Consequently, it also acts on $\Phi$, by
$$(\sigma \gamma)(x)=\gamma(\sigma^{-1}x),$$
for all $\sigma\in\Sigma_3$, $\gamma \in \Phi$, and $x\in \d^0$.

The tangent space to ${\rm Fl}(\O)$ (regarded as a submanifold of
 euclidean space $\h^0_3(\O)$) at the point $x_0$ introduced in Proposition \ref{x0123} is
$$T_{x_0}{\rm Fl}(\O)
=
\h_{\gamma_1}\oplus
 \h_{\gamma_2}\oplus\h_{\gamma_3}.$$
Consider the vector bundles
 $\E_1,\E_2,$ and  $\E_3$ on
${\rm Fl}(\O)$ given by
\begin{equation}\label{ek}\E_k|_{g.x_0}=g.\h_{\gamma_k},\end{equation}
for any $g\in {\rm F}_4$, $k=1,2,3$.
These are  sub-bundles of the tangent bundle of ${\rm Fl}(\O)$.  
In what follows we will show that $\E_1$ and $\E_2$ defined by Equation (\ref{ek}) are the same
as $\E_1$ and $\E_2$ defined by Equation (\ref{e12}).

\begin{propo}\label{foral}
The vector bundles $\E_1$ and $\E_2$ defined by Equation (\ref{ek}) satisfy
$$\E_1|_{g.x_0}=T_{g.x_0}\pi_1^{-1}(\pi_1(g.x_0))
\ {\it and } \ 
\E_2|_{g.x_0}=T_{g.x_0}\pi_2^{-1}(\pi_2(g.x_0))
$$
for all $g\in {\rm F}_4$.
\end{propo}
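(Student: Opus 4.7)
Both candidate definitions of $\E_k$ (for $k=1,2$) produce ${\rm F}_4$-equivariant bundles on ${\rm Fl}(\O)$: definition (\ref{ek}) by construction, and (\ref{e12}) because the projections $\pi_1,\pi_2$ are ${\rm F}_4$-equivariant. Hence it suffices to check agreement on the fibers over a single point, and the natural choice is $x_0$. Under the identification ${\rm Fl}(\O) = {\rm F}_4.x_0 \leftrightarrow \{(g.d_1,g.d_2):g\in{\rm F}_4\}$ furnished by Propositions \ref{lal} and \ref{x0123}, we have $x_0\leftrightarrow(d_1,d_2)$ and $\pi_k(g.x_0)=g.d_k$, so the task reduces to showing
\begin{equation*}
T_{x_0}\pi_k^{-1}(\pi_k(x_0)) = \h_{\gamma_k},\qquad k=1,2.
\end{equation*}

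Consider $\pi_1$. Transitivity of ${\rm F}_4$ on ${\rm Fl}(\O)$ implies that the ${\rm F}_4$-stabilizer ${\rm Spin}(9)$ of $d_1$ acts transitively on the fiber $\pi_1^{-1}(d_1)$, so $T_{x_0}\pi_1^{-1}(d_1)$ is the image of the infinitesimal action $Y\mapsto Y.x_0$ as $Y$ ranges over the Lie algebra of this ${\rm Spin}(9)$. Any such $Y$ satisfies $Y.d_1=0$ and, by differentiating (\ref{gii}), $Y.I=0$; therefore $Y.(d_2+d_3)=0$, and
\begin{equation*}
Y.x_0 = x_1^0\,Y.d_1 + x_2^0\,Y.d_2 + x_3^0\,Y.d_3 = (x_2^0-x_3^0)\,Y.d_2.
\end{equation*}
As $Y$ varies, $Y.d_2$ ranges over $T_{d_2}({\rm Spin}(9).d_2)=T_{d_2}\O{\rm P}^1$, where $\O{\rm P}^1$ is the fiber of $\pi_1$ identified inside $\{0\}\oplus\h_2(\O)$ as in the proof of Proposition \ref{flot}. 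A direct perturbation from the defining equations $y_2+y_3=1$ and $|r|^2=y_2 y_3$, expanded at $d_2$ (where $y_2=1$, $y_3=0$, $r=0$), shows that the diagonal entries can vary only to second order while $r$ is free. Consequently $T_{d_2}\O{\rm P}^1$ is exactly the subspace of $\h_3(\O)$ whose only nonzero entries lie in positions $(2,3)$ and $(3,2)$, namely $\h_{\gamma_1}$. Since $x_2^0\neq x_3^0$ and $\h_{\gamma_1}$ is a linear subspace, $T_{x_0}\pi_1^{-1}(d_1)=(x_2^0-x_3^0)\h_{\gamma_1}=\h_{\gamma_1}$.

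The argument for $\pi_2$ is entirely parallel, with the ${\rm F}_4$-stabilizer of $d_2$ (again isomorphic to ${\rm Spin}(9)$) in place of the previous one. The same bookkeeping yields $Y.x_0=(x_1^0-x_3^0)\,Y.d_1$, and $Y.d_1$ ranges over the tangent space at $d_1$ to the fiber $\pi_2^{-1}(d_2)$, which consists of rank-one, trace-one idempotents of $\h_3(\O)$ with vanishing $(2,2)$-entry; the analogous perturbative computation identifies this tangent space with $\h_{\gamma_2}$, giving $T_{x_0}\pi_2^{-1}(d_2)=\h_{\gamma_2}$. The only mildly delicate step, and thus the main obstacle, is the explicit identification of $T_{d_2}\O{\rm P}^1$ with $\h_{\gamma_1}$ by perturbing the defining equations; everything else follows from ${\rm F}_4$-equivariance and the relations $g.I=I$ and $d_1+d_2+d_3=I$.
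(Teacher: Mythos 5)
Your proposal is correct, and its endgame coincides with the paper's: both arguments reduce by ${\rm F}_4$-equivariance to the single fiber over $d_1$, identify that fiber with the octonionic projective line cut out by the idempotency and trace conditions, and compute its tangent space at $d_2$ by a first-order perturbation (the paper phrases the same computation as the tangent space to the sphere $|r|^2+(x_3-\tfrac{1}{2})^2=\tfrac{1}{4}$ at the base point), obtaining $\h_{\gamma_1}$. Where you genuinely add something is the intermediate step. The paper passes from the tangent space of the fiber, computed inside $\O {\rm P}^2\subset \h_3(\O)$, to the subspace $\h_{\gamma_1}$ of $T_{x_0}({\rm F}_4.x_0)\subset \h_3^0(\O)$ with the single sentence ``Regarded as a subspace of $\h_3^0(\O)$, the latter space is just $\h_{\gamma_1}$'', whereas you make this transfer explicit through the infinitesimal action of the stabilizer ${\rm Spin}(9)$ of $d_1$: the identity $Y.x_0=(x_2^0-x_3^0)\,Y.d_2$, obtained from $Y.d_1=0$ and $Y.I=0$, shows that the tangent space to the fiber through $x_0$ in the orbit model and the tangent space $T_{d_2}\O {\rm P}^1$ in the incidence model differ only by the nonzero scalar $x_2^0-x_3^0$, hence coincide as linear subspaces. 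This is a worthwhile clarification, since $\E_1$ in Equation (\ref{ek}) is defined in the orbit model while $\pi_1$ is defined on the incidence model; the substantive computation (the tangent space to $\O {\rm P}^1$ at a coordinate idempotent) is identical in the two arguments, so the proofs are essentially the same modulo this bookkeeping.
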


\begin{proof} 
 We prove the first equality.
By ${\rm F}_4$-equivariance, we only need to prove that
$$\h_{\gamma_1}=T_{(d_1,d_2)}\pi_1^{-1}(d_1).$$
Here we have used that $x_0$ corresponds to $(d_1,d_2)$ via the
isomorphism (\ref{fii}). 
We saw in the proof of Proposition \ref{flot}  that 
$\pi_1^{-1}(d_1)$ consists of all
$$
a=
\left(
\begin{array}{ccccc}
0& 0 & 0\\
0 & 1-x_3 &r\\
0&\bar{r}&x_3
\end{array}
\right)
$$
where $x_3\in \bR$ and $r\in \O$ such that 
$a^2=a$. This gives
$$|r|^2+\left(x_3-\frac{1}{2}\right)^2=\frac{1}{4}.$$
This is a full sphere in the 9-dimensional space $\O \times \bR$
 whose tangent space at
$(r,x_3)=(0,1)$ is described by $x_3=0$.
Regarded as a subspace of $\h_3^0(\O)$, the latter space is just $\h_{\gamma_1}$.
\end{proof}

Since ${\rm Fl}(\O)$ is a real flag manifold, we deduce from Appendix \ref{lasts1} (especially
Theorem \ref{dkv}) that it
 has the following natural cell decomposition:
\begin{equation}\label{cwdec}{\rm Fl}(\O)=\bigsqcup_{\sigma\in \Sigma_3} C_\sigma.\end{equation}
 For each $\sigma\in \Sigma_3$, the cell $C_\sigma$ is 
 invariant under the action of ${\rm Spin}(8)$ and  we have a  ${\rm Spin}(8)$-equivariant diffeomorphism
\begin{equation}\label{cs}C_\sigma\cong\bigoplus \h_\gamma\end{equation} 
 where the sum runs over all $\gamma\in \Phi^+$ such that 
 $\sigma^{-1}\gamma\in\Phi^-$ (see Corollary \ref{cwd}). The following result will play an important role
 in our investigation:
 
 \begin{propo}\label{eac} Each $C_\sigma$ can be identified with $\bC^{n(\sigma)}$ for some number 
$n(\sigma)$. In this way, the canonical maximal torus
$T$ of ${\rm Spin}(8)$ (see for example \cite[Ch.~3]{Ad} or
\cite[Ch.~IV, Theorem 3.9]{Br-tD}) acts $\bC$-linearly on  $C_\sigma$. 
\end{propo}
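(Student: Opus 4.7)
The plan is to produce a $T$-invariant complex structure on each cell $C_\sigma$ by exhibiting one on each of the root spaces $\h_{\gamma_1}, \h_{\gamma_2}, \h_{\gamma_3}$ and then invoking the direct sum decomposition (\ref{cs}). The preceding subsection identifies these three root spaces with the real ${\rm Spin}(8)$-representations $V_8$, $S_8^+$, and $S_8^-$ respectively, so it suffices to endow each of these with a $T$-invariant complex structure.

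The key input is the weight decomposition of these representations under the rank-4 maximal torus $T\subset {\rm Spin}(8)$. Each of $V_8$, $S_8^+$, $S_8^-$ is real and 8-dimensional, and the complexification of each splits into eight one-dimensional weight spaces; since each representation admits a real form, the weights pair up by complex conjugation into four $\pm$-pairs. Each such conjugate pair cuts out a real 2-dimensional $T$-invariant subspace of the underlying real representation, on which $T$ acts by rotation through a single nontrivial character. Choosing an orientation on each such subspace endows it with a $T$-invariant complex structure making it $T$-equivariantly isomorphic to $\bC$. Summing the four resulting complex lines identifies each $\h_{\gamma_k}$ with $\bC^4$ on which $T$ acts $\bC$-linearly.

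Plugging this into (\ref{cs}) yields
$$C_\sigma \cong \bC^{n(\sigma)}, \qquad n(\sigma) := 4 \cdot \#\{\gamma\in\Phi^+ \mid \sigma^{-1}\gamma\in\Phi^-\},$$
with $T$ acting $\bC$-linearly, as required. The only arbitrary choice is an orientation on each weight-pair subspace; this is immaterial since the proposition asserts merely the existence of some such identification. I do not anticipate a serious obstacle: the two things to verify are that the four 2-dimensional pieces inside each $\h_{\gamma_k}$ are genuinely $T$-invariant (immediate from the weight-space decomposition) and that the complex structures assemble $T$-equivariantly on the whole cell (automatic from the direct sum). The mildest technical point is to be explicit about the weights of $V_8$, $S_8^+$, $S_8^-$ relative to the canonical maximal torus of ${\rm Spin}(8)$, but this is standard (see e.g.\ \cite{Ad} or \cite{Br-tD}).
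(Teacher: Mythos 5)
Your proof is correct and reaches the same structural conclusion as the paper --- namely that $T$ acts on each root space $\h_{\gamma_k}$ through four planar rotations, which become $\bC$-linear after identifying each invariant $2$-plane with $\bC$ --- but the route differs in how the two half-spin representations are handled. The paper treats only $V_8$ directly: it writes out $\pi(T)\subset {\rm SO}(8)$ as block-diagonal matrices with four $2\times 2$ rotation blocks and identifies $\bR^8\cong\bC^4$ by pairing coordinates; it then disposes of $S_8^{\pm}$ in one line by observing that these differ from $V_8$ by outer (triality) automorphisms of ${\rm Spin}(8)$ that preserve $T$. You instead argue uniformly via the weight decomposition: each of the three real $8$-dimensional representations has eight distinct nonzero weights closed under negation, so it splits into four $T$-invariant $2$-planes on which $T$ acts by rotation through a single character, and an orientation on each plane yields a $T$-invariant complex structure. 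Your version is slightly more self-contained in that it does not invoke triality, at the cost of needing the weights of $S_8^{\pm}$ (which the paper only records later, in Subsection \ref{rootspi}); the paper's version gets the spin representations for free once $V_8$ is done. One small point worth making explicit: the reason the four $2$-planes exhaust each representation --- i.e.\ there is no leftover trivial summand, which could obstruct a complex structure if it were odd-dimensional --- is precisely that none of the weights vanishes. With that noted, both proofs are complete and yield the same identification $C_\sigma\cong\bC^{n(\sigma)}$ with $n(\sigma)=4\cdot\#\{\gamma\in\Phi^+\mid\sigma^{-1}\gamma\in\Phi^-\}$.
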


\begin{proof} By the decomposition (\ref{cs}), it is sufficient to study the action of $T$ on
$V_8$, $S_8^+$, and $S_8^-$. The last two representations of
${\rm Spin}(8)$ are obtained from the first one by (outer) automorphisms 
of ${\rm Spin}(8)$ (see \cite[Theorem 5.6]{Ad}).
Since any of these automorphisms leave $T$ invariant, 
it is sufficient to consider the action of
$T$ on $V_8$. 
Without giving the exact description of $T$ (see the references  above), 
we recall that if
$\pi: {\rm Spin}(8)\to {\rm SO}(8)$ is the canonical double covering, then
the elements of $\pi(T)$ are block diagonal $8\times 8$ matrices consisting of four  blocks of the form
$$\left(
\begin{array}{rrrrr}
\cos \theta & -\sin\theta\\
\sin\theta & \cos \theta
\end{array}
\right),
$$      
where $\theta\in \bR$. If we identify $\bR^8=\bC^4$ via
$$(x_1,x_2, \ldots, x_7,x_8)
=(x_1+ix_2,\ldots, x_7+ix_8),$$
then the action of any element of $T$ is given by four copies of a  map
of the form $$x_1+ix_2\mapsto (\cos \theta +i\sin\theta)(x_1+ix_2)$$
for all $x_1+ix_2\in \bC$. This map is obviously $\bC$-linear 
(since the multiplication of complex numbers is commutative). \end{proof}

Finally, we describe the fixed points of the 
${\rm Spin}(8)$-action on ${\rm Fl}(\O)$.

\begin{propo}\label{fixp}
The fixed point set of the ${\rm Spin}(8)$-action on
${\rm Fl}(\O)={\rm F}_4.x_0$ is
$${\rm Fl}(\O)^{{\rm Spin}(8)} = \Sigma_3x_0.$$
If $T\subset {\rm Spin}(8)$ is the canonical maximal torus, then the
fixed points of the $T$- and the ${\rm Spin}(8)$-action on
${\rm Fl}(\O)$ are the same.
\end{propo}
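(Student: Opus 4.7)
The plan is to handle the two assertions separately, reducing each to a representation-theoretic computation on the cells of the decomposition (\ref{cwdec}).

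For the first assertion, I would exploit the cell decomposition ${\rm Fl}(\O) = \bigsqcup_{\sigma \in \Sigma_3} C_\sigma$ from (\ref{cwdec}) together with the ${\rm Spin}(8)$-equivariant identification $C_\sigma \cong \bigoplus \h_\gamma$ from (\ref{cs}). Each summand $\h_\gamma$ is one of $V_8$, $S_8^+$, $S_8^-$; each of these ${\rm Spin}(8)$-representations is irreducible and nontrivial, hence has trivial fixed subspace. Consequently the ${\rm Spin}(8)$-fixed set of $C_\sigma$ is reduced to its origin, which, by the standard construction of the Bruhat-type cell decomposition (see Appendix \ref{lasts1}), is precisely $\sigma \cdot x_0$. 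Assembling over the six cells yields ${\rm Fl}(\O)^{{\rm Spin}(8)} = \Sigma_3 \cdot x_0$.

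For the second assertion, the inclusion ${\rm Fl}(\O)^{{\rm Spin}(8)} \subset {\rm Fl}(\O)^T$ is automatic from $T \subset {\rm Spin}(8)$. For the reverse, by Proposition \ref{eac} the torus $T$ acts $\bC$-linearly on each cell $C_\sigma \cong \bC^{n(\sigma)}$, so the $T$-fixed set of $C_\sigma$ is determined by the weights of $T$ acting on $V_8$, $S_8^+$, $S_8^-$; it therefore suffices to show that each of these representations has no zero weight. For $V_8$, the explicit description in the proof of Proposition \ref{eac} (four independent rotations on $\bC^4$, one per circle factor of $T$) exhibits its four weights as linearly independent characters of $T$, in particular each nonzero. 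For $S_8^{\pm}$, the same conclusion is inherited from $V_8$ via the outer automorphisms of ${\rm Spin}(8)$ that preserve $T$ and interchange the three representations, exactly as invoked in the proof of Proposition \ref{eac}. Hence the $T$-fixed set of every cell coincides with its origin, and ${\rm Fl}(\O)^T = \Sigma_3 \cdot x_0 = {\rm Fl}(\O)^{{\rm Spin}(8)}$.

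The only delicate point in this plan is the identification of the origin of $C_\sigma$ with $\sigma \cdot x_0$; this is a standard feature of the Bruhat-type decomposition recalled in Appendix \ref{lasts1}, so no new argument is required. Beyond that, the proof reduces to checking that the three eight-dimensional real ${\rm Spin}(8)$-representations involved have no zero weight, which is representation-theoretic bookkeeping.
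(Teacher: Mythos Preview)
Your argument is correct, but it takes a genuinely different route from the paper's proof. The paper does not use the cell decomposition (\ref{cwdec}) at all; instead it works in the ambient linear space $\h_3^0(\O)$ and proves the stronger claim that any $T$-fixed vector in $\h_3^0(\O)$ already lies in $\d^0$. The argument is a rank count: if some component $a_1\in\h_{\gamma_1}=V_8$ were nonzero and $T$-fixed, then the image of $T$ in ${\rm SO}(8)$ would lie in the ${\rm SO}(8)$-stabilizer of $a_1$, which is a copy of ${\rm SO}(7)$; but ${\rm SO}(7)$ has rank $3$, contradicting $\dim T=4$. Triality transports this to $S_8^\pm$. One then intersects with the orbit, invoking the fact that ${\rm F}_4.x_0\cap\d^0=\Sigma_3 x_0$, and uses (\ref{fix}) for the reverse inclusion.

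Your approach is cleaner in one respect: having already set up the ${\rm Spin}(8)$-equivariant cell structure (\ref{cwdec})--(\ref{cs}), the fixed-point computation becomes a pure weight check, and you avoid the appeal to the Hauptachsentransformation result needed to identify ${\rm F}_4.x_0\cap\d^0$. The paper's approach, on the other hand, is logically more self-contained, since it does not presuppose the Bruhat-type machinery from Appendix~\ref{lasts1}. At the level of representation theory the two arguments are equivalent: the paper's rank obstruction (a $4$-torus cannot fix a nonzero vector in $V_8$) is exactly the statement that $V_8$ has no zero $T$-weight, which is what you verify directly.
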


\begin{proof} 
We start with the following claim.

\noindent {\it Claim.} If $a\in \h_3^0(\O)$ is fixed 
 by $T$, then $a$ is in $\d^0$. 
 
 \noindent To prove this we decompose
 $$a=a_0+a_1+a_2+a_3,$$
where $a_0\in \d^0$ and $a_j\in \h_{\gamma_j}$, $j=1,2,3$.
Since $\d^0, \h_{\gamma_1},\h_{\gamma_2}$, and $\h_{\gamma_3}$ are ${\rm Spin}(8)$-invariant
(see above),
all four of $a_0,a_1,a_2,a_3$ are fixed by $T$.
Assume that $a$ is not in $\d^0$.
Then at least one of $a_1$, $a_2$, and $a_3$ is non-zero.
Say first that $a_1$ is non-zero. 
We have
\begin{equation}\label{pi}\pi(g)\cdot a_1=a_1,\end{equation}
for all $g\in T$.  Here $\pi:{\rm Spin}(8)\to {\rm SO}(8)$ is the canonical double
covering and ``$\cdot$" is the matrix multiplication. The ${\rm SO}(8)$-stabilizer
of $a_1$ is isomorphic to ${\rm SO}(7)$. Equation (\ref{pi}) says that
this stabilizer contains the four dimensional torus $\pi(T)$ as a subgroup, which
 contradicts ${\rm rank}({\rm SO}(7))=3$.  
 If $a_2$ (or $a_3$) is different from 0, the argument we use is similar:
 the representation of ${\rm Spin}(8)$ on $\h_{\gamma_2}=S_8^+$
 (respectively on $\h_{\gamma_3}=S_8^-$)
 differs from $V_8$ by an
 (outer) automorphism of ${\rm Spin}(8)$.
 
The claim implies that
 $${\rm Fl}(\O)^T\subset {\rm Fl}(\O)\cap \d^0=\Sigma_3x_0.$$
For the last equality we have used  \cite[Section 5 (Hauptachsentransformation von ${\mathfrak I}$)]{Fr} (see also
\cite[Section 5, Lemma 1]{Mu}).
 On the other hand, Equation (\ref{fix}) implies that
 $${\rm Fl}(\O)\cap \d^0 \subset {\rm Fl}(\O)^{{\rm Spin}(8)}.$$ 
 This finishes the proof.
\end{proof}

\section{Cohomology of ${\rm Fl}(\O)$}\label{three}

Let us consider again the projection maps $\pi_1,\pi_2 : {\rm Fl}(\O)\to \O {\rm P}^2$  defined by Equation (\ref{e12}). We would like to describe $\pi_1$ and $\pi_2$ by using the identification between
${\rm Fl}(\O)$ and the orbit ${\rm F}_4.x_0$ (see Proposition \ref{x0123}). 
To this end, we consider the following two elements of $\d^0$:
$$
d_1^0:=d_1-\frac{1}{3}I=
\left(
\begin{array}{ccccc}
\frac{2}{3}& 0 & 0\\
0 & -\frac{1}{3} &0\\
0&0&-\frac{1}{3}
\end{array}
\right)
\quad {\rm and} \quad
d_2^0:=d_2-\frac{1}{3}I=
\left(
\begin{array}{ccccc}
-\frac{1}{3}& 0 & 0\\
0 & \frac{2}{3} &0\\
0&0& -\frac{1}{3}
\end{array}
\right).
$$
For each of them, the ${\rm F}_4$-stabilizer is a copy of ${\rm Spin}(9)$ which
contains the ${\rm F}_4$-stabilizer of $x_0$, see Proposition \ref{x0123}. Thus, the ${\rm F}_4$-orbits
of $d_1^0$ and $d_2^0$  are both diffeomorphic 
to $\O {\rm P}^2$.
The maps
$$ p_1: {\rm F}_4.x_0 \to {\rm F}_4.d_1^0 \quad {\rm and} \quad p_2:{\rm F}_4.x_0\to {\rm F}_4.d_2^0$$
given by $p_1(g.x_0)=g.d_1^0$ and $p_2(g.x_0)=g.d_2^0$ are well defined.
Let us consider the following  diagram:
$$\xymatrix{
    {\rm Fl}(\O) \ar[rr]^{\pi_1}   \ar@{->}[dd]^{} & &
    \ar@{->}[dd]^{}  \O {\rm P}^2 \\
     \\
   \ar[rr]^{{p_1}} {\rm F}_4.x_0& & 
  {\rm F}_4.d_1^0} $$
Here, the vertical arrow in the left-hand side is the ${\rm F}_4$-equivariant diffeomorphism
which maps  $(d_1,d_2)$ to $x_0$ (see Proposition \ref{x0123}). The other vertical arrow in the diagram  is the diffeomorphism given by $$x\mapsto x-\frac{1}{3}I,$$
for all $x\in \O {\rm P}^2$: it is an ${\rm F}_4$-equivariant diffeomorphism too. The diagram is commutative. We also have a similar diagram
which involves $p_2$ and $\pi_2$.  
Thus, if  we   identify
$${\rm F}_4.x_0={\rm Fl}(\O), \ {\rm F}_4.d_1^0=\O {\rm P}^2, \ {\rm and} \  {\rm F}_4.d_2^0=\O {\rm P}^2  $$
then we have the following result:

\begin{proposition}\label{themapsp}
The maps $p_1, p_2: {\rm Fl}(\O)\to \O {\rm P}^2$ defined above are
${\rm Spin}(8)$-equivariant $\O {\rm P}^1$-bundles.
The vector bundles $\E_1$ and $\E_2$ defined by Equation (\ref{ek}) satisfy
$$\E_1|_{g.x_0}=T_{g.x_0}p_1^{-1}(p_1(g.x_0))
\ {\it and } \ 
\E_2|_{g.x_0}=T_{g.x_0}p_2^{-1}(p_2(g.x_0))
$$
for all $g\in {\rm F}_4$.
\end{proposition}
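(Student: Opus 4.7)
The plan is to observe that Proposition \ref{themapsp} is essentially a translation of Propositions \ref{flot} and \ref{foral} through the ${\rm F}_4$-equivariant diffeomorphisms appearing in the commutative diagram displayed just above the statement. The real content is verifying that this diagram does commute; once that is done, everything else follows mechanically.

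First I would check commutativity explicitly. Fix $g\in {\rm F}_4$ and trace $g.x_0$ along both paths. Going down and then right gives $p_1(g.x_0)=g.d_1^0$. Going up and then right, the left vertical arrow is ${\rm F}_4$-equivariant and sends $x_0\mapsto (d_1,d_2)$, so $g.x_0$ corresponds to $(g.d_1,g.d_2)$; applying $\pi_1$ yields $g.d_1$, and the right vertical arrow then subtracts $\frac{1}{3}I$ to give $g.d_1-\frac{1}{3}I$. By Equation (\ref{gii}), $g.I=I$, so this equals $g.(d_1-\frac{1}{3}I)=g.d_1^0$. Both paths agree, and the same argument works verbatim for $p_2$ versus $\pi_2$.

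With the diagram commuting, the first assertion is immediate: Proposition \ref{flot} says $\pi_1$ is an $\O{\rm P}^1$-bundle, and because both vertical arrows are diffeomorphisms the same is true of $p_1$, whose fiber over $g.d_1^0$ is carried diffeomorphically onto $\pi_1^{-1}(g.d_1)$. The ${\rm Spin}(8)$-equivariance (in fact the ${\rm F}_4$-equivariance) is visible directly from the defining formula $p_1(g.x_0)=g.d_1^0$. An identical argument handles $p_2$.

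For the second assertion I would invoke Proposition \ref{foral} and transport the equality $\E_1|_{g.x_0}=T_{g.x_0}\pi_1^{-1}(\pi_1(g.x_0))$ through the differential of the left vertical diffeomorphism; since this diffeomorphism maps $\pi_1^{-1}(g.d_1)$ onto $p_1^{-1}(g.d_1^0)=p_1^{-1}(p_1(g.x_0))$, its derivative identifies the corresponding tangent spaces, and the desired identity for $\E_1$ follows. The argument for $\E_2$ is the same. There is no real obstacle here: the difficulty is purely notational, namely keeping straight the two realizations ${\rm Fl}(\O)=\{(d_1,d_2)\text{-pairs}\}$ and ${\rm Fl}(\O)={\rm F}_4.x_0$ and matching the two diagrams so as to apply Proposition \ref{foral} correctly in each.
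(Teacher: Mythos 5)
Your proposal is correct and follows exactly the paper's route: the paper simply states that the proposition "is a direct consequence of Propositions \ref{flot} and \ref{foral}," relying on the commutative diagram constructed just above. You have merely made explicit the routine verification (commutativity via $g.I=I$ and linearity of $g$, plus transport of fibers and tangent spaces through the two diffeomorphisms) that the paper leaves to the reader.
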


This proposition is a direct consequence of Propositions 
\ref{flot} and \ref{foral}.

We will use the notation
$$X:={\rm Fl}(\O)={\rm F}_4.x_0.$$ 

Let us consider again
the functions  $\gamma_1,\gamma_2,\gamma_3:\d^0\to \bR$ defined in the previous section (actually the restrictions to $\d^0$ of the functions
given by Equation (\ref{gam})). Recall that
$\{\pm \gamma_1,\pm\gamma_2,\pm\gamma_3\}$
is a root system of type $A_2$. 
We choose the simple root system consisting of  $\gamma_1$ and $\gamma_2$;
then $\gamma_3=\gamma_1+\gamma_2$ is the third positive root.

In what follows we will construct an orientation on each of the bundles $\E_k,$ $k=1,2,3$.
 First, we pick an orientation on $\E_k|_{x_0}=\h_{\gamma_k}$ (see below).
Then, if $g\in {\rm F}_4$, we choose the orientation on $\E_k|_{g.x_0}=g.\h_{\gamma_k}$
in such a way  that the map $g$ is orientation preserving (note that this definition
does not depend on $g$, since  the stabilizer group
$({\rm F}_4)_{x_0}={\rm Spin}(8)$ is connected and each of its elements acts on $\h_{\gamma_k}$ as a linear orthogonal
transformation, see Section 2).   
Thus, orienting $\E_1, \E_2$, and $\E_3$ amounts to choosing orientations on
$\h_{\gamma_1}, \h_{\gamma_2}$, and $\h_{\gamma_3}$.
We proceed as follows. 
First we take into account that $\gamma_3=s_2\gamma_1$, where
$s_2$ denotes the element of the Weyl group $W$ given by
the reflection of $\d^0$ about $\ker\gamma_2$ (see (\ref{weylg}) for the definition of $W$). There exists
$n_2\in {\rm F}_4$ with   $n_2.\d^0=\d^0$ such that $s_2$ is equal to the
coset $[n_2]=n_2{\rm Spin}(8)$ in $\Sigma_3$.
Consequently, we have
$$\gamma_3=\gamma_1\circ n_2^{-1}.$$
This implies that $n_2$ maps $\h_{\gamma_1}$ to $\h_{\gamma_3}$.   
Similarly, there exists $n_1\in {\rm F}_4$ such that 
$$\gamma_3=\gamma_2\circ n_1^{-1}.$$
Thus, $n_1^{-1}$ maps $\h_{\gamma_3}$ to $\h_{\gamma_2}$.
We pick and fix an orientation on $\h_{\gamma_1}$;
 the orientations we equip $\h_{\gamma_2}$ and $\h_{\gamma_3}$ with are
such that the maps $n_1$ and $n_2$ are orientation preserving.

The main goal of this section is to prove Theorem \ref{mainth}.
We proceed as follows.
First, observe that
$X$ is a 24-dimensional manifold. 
It is known (see for instance \cite[Section 5]{Hs-Pa-Te}) that 
the group $H^*(X;\bZ)$  is a free $\bZ$-module
such that 
\begin{equation}\label{dimh}{\rm rank} \ H^k(X;\bZ)=
\begin{cases}
0, \ {\rm if \ }  k\notin \{0,8,16, 24\}\\
2, \ {\rm if \ } k\in \{8,16\}\\
1, {\rm \ if \ } k\in \{0,24\}.
\end{cases}
\end{equation}
 A basis of $H^8(X;\bZ)$ can be constructed  as follows.
By Proposition \ref{themapsp}, the subspaces $$\S_1:=p_1^{-1}(d_1^0)
 \quad {\rm and} \quad \S_2:=p_2^{-1}(d_2^0)$$ of ${\rm Fl}(\O)$ are 
diffeomorphic to $\O {\rm P}^1$, hence to the sphere $S^8$. By Proposition \ref{themapsp},
the tangent bundle of $\S_1$ is just $\E_1|_{\S_1}$: thus, the orientation of $\E_1$ chosen above induces an orientation on $\S_1$. Similarly we can also orient $\S_2$. 
  The homology classes $[\S_1]$ and $[\S_2]$ carried by $\S_1$ and $\S_2$ 
 are a basis  of $H_8(X;\bZ)$.
  Thus, the cohomology classes $\beta_1,\beta_2\in H^8(X;\bZ)$ determined by
 $$ ( \beta_i, [\S_j]) =
 \begin{cases}
 1, \ {\rm if } \ i=j\\
 0,\ {\rm otherwise}
 \end{cases}
 ,$$
$1\le i,j\le 2$, are a basis of $H^8(X;\bZ)$
(here $( \ , \ ) :H^8(X;\bZ)\otimes
H_8(X;\bZ) \to \bZ$ denotes the evaluation pairing). 

We take into account that the elements $d_1^0$ and $d_2^0$ of $\d^0$
satisfy $\gamma_1(d_1^0)=0$ and
$\gamma_2(d_2^0)=0$. The following equations can be deduced from 
\cite[Proof of Theorem 6.12]{Hs-Pa-Te}
(see also 
\cite[Proof of Lemma 3.3]{Ma1}):
\begin{align*}{}& e(\E_1)= 2\beta_1+
\frac{2\langle \gamma_1,\gamma_2\rangle}{\langle \gamma_2,\gamma_2\rangle}
\beta_2
=2\beta_1-\beta_2\\
{}&e(\E_2)= \frac{2\langle \gamma_2,\gamma_1\rangle}{\langle \gamma_1,\gamma_1\rangle}
\beta_1 +2\beta_2
=-\beta_1+2\beta_2\\
{}& e(\E_3)=e(\E_1)+e(\E_2).
\end{align*}
Thus, we have
\begin{equation}\label{beta}\beta_1
=\frac{1}{3}(2e(\E_1)+e(\E_2)) \quad  {\rm and } \quad 
\beta_2
=\frac{1}{3}(e(\E_1)+2e(\E_2)).
\end{equation}

From Equation (\ref{ek}) we deduce that
the tangent bundle $TX$ can be split as
$$TX=\E_1\oplus \E_2\oplus \E_3.$$
This implies:
$$e(TX)=e(\E_1)e(\E_2)e(\E_3)
=e(\E_1)e(\E_2)(e(\E_1)+e(\E_2)).$$
If $[X]$ is the fundamental homology class of $X$, then
$$( e(\E_1)e(\E_2)(e(\E_1)+e(\E_2)),[X])
=(e(TX),[X])
=\chi(X)=6,$$
where $\chi(X)$ is the Euler-Poincar\'e characteristic of $X$.
We know it is equal to 6 by Equation (\ref{dimh}).
Consequently, the cohomology class
\begin{equation}\label{1over6}\frac{1}{6}e(\E_{1})e(\E_{2})(e(\E_{1})+e(\E_{2}))
\end{equation}
is a basis of $H^{24}(X;\bZ)$ over $\bZ$. 

Let us now consider separately the root system $\{\pm\gamma_1,\pm\gamma_2,\pm\gamma_3\}$.
The fundamental weights corresponding to the simple roots 
$\gamma_1,\gamma_2$ are
$$\lambda_1=\frac{1}{3}(2\gamma_1+\gamma_2) \quad  {\rm and } \quad 
\lambda_2=\frac{1}{3}(\gamma_1+2\gamma_2).$$
We know that there exists a canonical isomorphism
between the ring
$$\bQ[\lambda_1,\lambda_2]/\langle 
{\rm  nonconstant \ symmetric  \ polynomials  \ in \ }
\lambda_1, \lambda_2-\lambda_1,-\lambda_2\rangle$$
and $H^*({\rm Fl}_3(\bC);\bQ)$, see \cite{Bo0}.  By a theorem of
Bernstein, I.~M.~Gelfand, and S.~I.~Gelfand, see \cite{Be-Ge-Ge},  the Schubert basis of 
$H^*({\rm Fl}_3(\bC);\bQ)$ over $\bQ$ is obtained
by considering (the coset of)
$$\frac{1}{6}\gamma_1\gamma_2(\gamma_1+\gamma_2)$$
 and applying successively  the divided difference   operators
 $\Delta_{\gamma_1}$ and $\Delta_{\gamma_2}$.
 Here, the operator $\Delta_\gamma$ corresponding to the root
 $\gamma\in \{\gamma_1,\gamma_2\}$ is defined by
 $$\Delta_{\gamma}(f)=\frac{f-f\circ s_\gamma}{\gamma}$$
 for any $f\in \bQ[\lambda_1,\lambda_2]$ (by $s_\gamma$ we denote the reflection  about the line $\ker\gamma$).
 The Bernstein-Gelfand-Gelfand basis of $H^*({\rm Fl}_3(\bC);\bQ)$ mentioned
 above consists of  the cosets of the following polynomials:
 $$\frac{1}{3}\gamma_1(\gamma_1+\gamma_2), \ 
 \frac{1}{3}\gamma_2(\gamma_1+\gamma_2)$$
 $$\lambda_1, \ \lambda_2$$
 $$1.$$
The Schubert classes corresponding to $\lambda_1$ and $\frac{1}{3}\gamma_1(\gamma_1+\gamma_2)$
are Poincar\'e dual to each other, hence we have: 
\begin{equation}\lambda_1\cdot \label{frac}\frac{1}{3}\gamma_1(\gamma_1+\gamma_2)=\frac{1}{6}\gamma_1\gamma_2(\gamma_1+\gamma_2)+f\end{equation}
 where $f\in \bQ[\lambda_1,\lambda_2]$ is in the ideal generated by the non-constant symmetric polynomials
 in $\lambda_1,\lambda_2-\lambda_1,-\lambda_2$.
 
We now return to the cohomology of $X$. By
\cite[Theorem 6.12]{Hs-Pa-Te} (see also
\cite[Section 3]{Ma1}),  the ring $H^*(X;\bQ)$ is generated by $\beta_1$ and
$\beta_2$, the ideal of relations being generated by the
symmetric polynomials in $\beta_1,\beta_2-\beta_1,-\beta_2$.
Equations (\ref{beta}) and (\ref{frac}) imply that the equality
$$\beta_1\frac{1}{3}e(\E_{1})(e(\E_{1})
+e(\E_{2}))=
\frac{1}{6}e(\E_{1})e(\E_{2})(e(\E_{1})+e(\E_{2})),$$
holds in $H^*(X;\bQ)$. The right-hand side of the equation is the fundamental cohomology class of $X$ over $\bZ$ (see 
Equation (\ref{1over6})).  Since $\beta_1$ is in $H^*(X;\bZ)$, we deduce that
 the cohomology class
 \begin{equation}\label{13}\frac{1}{3}e(\E_{1})(e(\E_{1})
+e(\E_{2}))\end{equation}
belongs to $H^*(X;\bZ)$, being  the Poincar\'e dual of $\beta_1$ in $H^*(X;\bZ)$. Similarly, the class
\begin{equation}\label{12}\frac{1}{3}e(\E_{2})(e(\E_{1})
+e(\E_{2}))\end{equation}
is in $H^{*}(X;\bZ)$, being the Poincar\'e dual of $\beta_2$.
Consequently, the classes given by (\ref{13}) and (\ref{12}) are 
a basis of $H^{16}(X;\bZ)$.
 
 To complete the proof, it only remains to show that the cohomology classes
 given by (\ref{13}) and (\ref{12}) can be expressed as polynomials with
 integer coefficients in $\beta_1$ and $\beta_2$. 
 Indeed, by using (\ref{beta}), we can see that
 $$\frac{1}{3}e(\E_{1})(e(\E_{1})
+e(\E_{2}))=\beta_1^2$$
and
$$\frac{1}{3}e(\E_{2})(e(\E_{1})
+e(\E_{2}))=\beta_2^2.$$
Here we have used the relation
$$\beta_1^2+\beta_2^2-\beta_1\beta_2=0,$$
which follows from the fact that the second symmetric polynomial in $\beta_1,\beta_2-\beta_1,-\beta_2$ is
equal to 0. 

\begin{rem}\label{tria1} Denoting by ${\mathcal E}^-_3$ the vector bundle ${\mathcal E}_3$ 
equipped with the orientation which is opposite to the one we have defined above, we can rephrase Theorem \ref{mainth} by saying that
the ring $H^*({\rm Fl}(\O);\bZ)$ is generated by 
$$x_1:=\frac{1}{3}(e(\E^-_3)-e(\E_2)), \ x_2:=\frac{1}{3}(e(\E_1)-e(\E^-_3)),    \ x_3:=\frac{1}{3}(e(\E_2)-e(\E_1)), $$ subject
 to the relations given by the vanishing of the symmetric polynomials in $x_1, x_2$, and $x_3$.
 See also Appendix \ref{lasts} and Remark \ref{gener}. 
\end{rem}

\begin{rem} The result stated in Theorem \ref{mainth} is not
entirely new: a similar description has been obtained for example 
in \cite[Theorem 2.3]{Yo} (cf.~also \cite[Lemma 20.4]{Bo}). The novelty of Theorem \ref{mainth} is that it gives geometric
descriptions of the generators of the cohomology ring.
\end{rem}

\section{Equivariant cohomology of ${\rm Fl}(\O)$: generators and relations}\label{four}

In this section we will prove Theorem \ref{main}.
As before, we denote 
$$M:={\rm Spin}(8) \quad {\rm and } \quad X:={\rm Fl}_3(\O)={\rm F}_4.x_0.$$
We first recall that the vector spaces $\h_{\gamma_1}, \h_{\gamma_2}$, and $\h_{\gamma_3}$,
as well as  the vector bundles $\E_1, \E_2$, and $\E_3$ have been endowed with orientations in Section \ref{three}.  
The following result   is an immediate consequence of Theorem \ref{mainth} (see also Section \ref{three}):

\begin{lemma}\label{allc} The ring  $H^*(X)$ is generated by $e(\E_1)$ and $e(\E_2)$, subject to  the relations
\begin{align*}
 S_i(2e(\E_1)+e(\E_2),-e(\E_1)+e(\E_2), -e(\E_1)-2e(\E_2))=0,
\end{align*}
$i=2,3$. Here $S_i$ denotes the $i$-th fundamental symmetric polynomial in three variables.
\end{lemma}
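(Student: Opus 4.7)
The plan is to derive Lemma \ref{allc} as an essentially cosmetic reformulation of Theorem \ref{mainth}(b), rescaling the generators by a factor of $3$ (which is a unit over $\bR$, the default coefficient ring of $H^*(X)$ by the convention stated in the introduction). I will not need any new geometric input, only the algebraic content of Theorem \ref{mainth}(b) and the linear relations already established in Section \ref{three}.

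First, I would tensor Theorem \ref{mainth}(b) with $\bR$. This presents $H^*(X)$ as the polynomial ring $\bR[\beta_1,\beta_2]$ in the two classes $\beta_1=\tfrac{1}{3}(2e(\E_1)+e(\E_2))$ and $\beta_2=\tfrac{1}{3}(e(\E_1)+2e(\E_2))$, modulo the ideal generated by $S_i(\beta_1,\beta_2-\beta_1,-\beta_2)$ for $i=2,3$. Next, I would invert the linear relations $e(\E_1)=2\beta_1-\beta_2$ and $e(\E_2)=-\beta_1+2\beta_2$ derived in Section \ref{three}. The matrix of this change of coordinates has determinant $3$, invertible over $\bR$, so the pair $\{e(\E_1),e(\E_2)\}$ generates the same subring of $H^*(X)$ as $\{\beta_1,\beta_2\}$. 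This already gives the generation statement of Lemma \ref{allc}.

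It then remains to rewrite the relations in the claimed form. From the identities $3\beta_1=2e(\E_1)+e(\E_2)$, $3(\beta_2-\beta_1)=-e(\E_1)+e(\E_2)$, and $-3\beta_2=-(e(\E_1)+2e(\E_2))$, together with the homogeneity $S_i(3u,3v,3w)=3^iS_i(u,v,w)$, the relation $S_i(\beta_1,\beta_2-\beta_1,-\beta_2)=0$ is equivalent (over $\bR$) to the relation $S_i(2e(\E_1)+e(\E_2),-e(\E_1)+e(\E_2),-e(\E_1)-2e(\E_2))=0$. One might briefly remark on why $S_1$ does not appear as a relation: the three arguments sum to zero identically, so $S_1$ vanishes as a polynomial identity rather than as a nontrivial cohomological relation. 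This completes the derivation, and there is no real obstacle beyond this routine bookkeeping.
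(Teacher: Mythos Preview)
Your proposal is correct and matches the paper's approach exactly: the paper states that Lemma \ref{allc} is ``an immediate consequence of Theorem \ref{mainth} (see also Section \ref{three})'' without spelling out any further details, and what you have written is precisely the routine rescaling and change of variables that makes this immediate.
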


We are actually interested here in the equivariant cohomology ring $H^*_M(X)$.
We recall that, by definition, we have $H^*_M(X)= H^*(EM \times_M X)$, where
$EM$ is the total space of the classifying principal bundle $EM\to BM$ of $M$.
As explained in the introduction, $H^*_M(X)$ has a canonical structure of $H^*(BM)$-module.
In the case at hand, this module turns out to be free, of rank equal to $\dim H^*(M)$: we say
that the $M$-action on $X$ is {\it equivariantly formal}. This follows readily from the fact that
$H^{\rm odd}(X)=\{0\}$, see Lemma \ref{allc},  and some standard results in equivariant cohomology, see for example
\cite[Lemma C.24 and Proposition C.26]{Gu-Gi-Ka}.
The result stated in the following proposition is a direct  
 consequence of equivariant
formality (see for example \cite[Proposition 4.4]{Ha-Ho}). 

\begin{proposition}\label{le}  The  graded ring homomorphism
$\jmath^*:H^*_M(X) \to H^*(X)$ induced by the canonical
inclusion $\jmath : X \to EM\times_M X$
is surjective. Its kernel is
$$\ker \jmath^* =\langle  H^+(BM).H^*_M(X)\rangle,$$
where $H^+(BM)$ denotes the space of
all elements of $H^*(BM)$ of strictly positive degree
and $\langle  H^+(BM).H^*_M(X)\rangle$ is the 
$\bR$-span of all elements of the form
$a.\alpha$, with $a\in H^+(BM)$ and
$\alpha \in H^*_M(X)$. 
\end{proposition}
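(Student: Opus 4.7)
The plan is to deduce both assertions from the equivariant formality of the $M$-action on $X$, arguing directly with the Leray--Serre spectral sequence of the Borel fibration
$$X\hookrightarrow EM\times_M X \to BM.$$
First I would verify equivariant formality by a parity argument. By Lemma \ref{allc} (whose proof rests on Theorem \ref{mainth}) the ring $H^*(X)$ is concentrated in even degrees, and by (\ref{hgb}) the generators of $H^*(BM)$ all live in even degrees, so $H^*(BM)$ is also even. Thus the $E_2$-page $E_2^{p,q}=H^p(BM)\otimes H^q(X)$ is supported on bidegrees with both $p$ and $q$ even, and every differential $d_r$ lowers total degree by $1$ while shifting bidegrees by $(r,-r+1)$; regardless of $r$, one of the source or target bidegrees is odd, forcing $d_r=0$. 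Hence the spectral sequence collapses at $E_2$ and $H^*_M(X)$ is a free $H^*(BM)$-module.

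For surjectivity of $\jmath^*$ I would fix classes $\alpha_1,\ldots,\alpha_n\in H^*_M(X)$ whose restrictions $\jmath^*(\alpha_i)$ form an $\bR$-basis of $H^*(X)$; such classes exist because $\jmath^*$ is identified with the edge homomorphism $H^*_M(X)\twoheadrightarrow E_\infty^{0,*}=E_2^{0,*}=H^*(X)$ of the collapsed spectral sequence. By collapse, $\{\alpha_i\}$ is moreover an $H^*(BM)$-basis of $H^*_M(X)$. Surjectivity is then immediate: any $c\in H^*(X)$ writes as $c=\sum_i r_i\jmath^*(\alpha_i)$ with $r_i\in\bR$, hence $c=\jmath^*(\sum_i r_i\alpha_i)$.

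For the kernel I would argue by augmentation. The restriction of $\jmath^*$ to the subring $H^*(BM)\subset H^*_M(X)$ factors through $H^*(\{x_0\})=\bR$, i.e., through the augmentation $\varepsilon:H^*(BM)\to\bR$ that kills $H^+(BM)$; consequently $\langle H^+(BM)\cdot H^*_M(X)\rangle\subset\ker\jmath^*$. Conversely, given $\beta\in\ker\jmath^*$, write uniquely $\beta=\sum_i c_i\alpha_i$ with $c_i\in H^*(BM)$. Applying $\jmath^*$ and using that it is an $\bR$-algebra map vanishing on $H^+(BM)$ yields $0=\jmath^*(\beta)=\sum_i\varepsilon(c_i)\jmath^*(\alpha_i)$; linear independence of the $\jmath^*(\alpha_i)$ forces $\varepsilon(c_i)=0$, so each $c_i\in H^+(BM)$ and $\beta\in\langle H^+(BM)\cdot H^*_M(X)\rangle$.

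There is no serious obstacle: the content of the statement is entirely the collapse of the spectral sequence, and the only thing that must be checked by hand is the parity vanishing, which follows immediately from the even-degree concentration of $H^*(X)$ and $H^*(BM)$. The rest is a standard manipulation with the augmentation of $H^*(BM)$, and could alternatively be packaged as a citation to \cite[Proposition 4.4]{Ha-Ho} once equivariant formality is in hand.
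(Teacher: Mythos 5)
Your proof is correct and takes essentially the same route as the paper, which deduces the statement from equivariant formality (itself a consequence of $H^{\rm odd}(X)=\{0\}$, citing \cite{Gu-Gi-Ka}) and then cites \cite[Proposition 4.4]{Ha-Ho}; you have simply unpacked those citations into the explicit Leray--Serre parity collapse and the Leray--Hirsch/augmentation argument. (One cosmetic slip: the differential $d_r$ \emph{raises} total degree by $1$ rather than lowering it, but your parity argument is unaffected.)
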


Our first goal is to prove that Equation  (\ref{ege1}) hold true.
The elements $b_1,b_2$ of $H^*(BM)$ involved there can actually 
be expressed as 
 $$b_k=e_M(\h_{\gamma_k}),$$
$k=1,2$ (see Proposition \ref{foral}). Let us also define
\begin{equation}\label{letusdef}b_3:=e_M(\h_{\gamma_3}).\end{equation}
The following notation is standard: if $\alpha\in H^*_M(X)$ and $x\in X^M$,
then the  restriction of $\alpha$ to $x$ is $$\alpha|_x:=i_x^*(\alpha),$$
where $i_x:\{x\}\to X$ is the inclusion map (note that 
$\alpha|_x\in H^*_M(\{x\})=H^*(BM)$). 
The following lemma will be needed later. It is worthwhile  recalling at this point  that $H^*(BM)$ is identified via
$P^*$ with a subspace of $H^*_M(X)$, see Section \ref{secfirst}. 

\begin{lemma}
We have
\begin{equation}\label{ege}
e_M(\E_1)+e_M(\E_2)-e_M(\E_3)=b_1+b_2-b_3.\end{equation}
\end{lemma}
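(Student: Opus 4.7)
The plan is to show that $\alpha := (e_M(\E_1) + e_M(\E_2) - e_M(\E_3)) - (b_1 + b_2 - b_3)$ vanishes in $H^8_M(X)$, by combining equivariant formality (Proposition \ref{le}) with restriction to the single fixed point $x_0$.

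First I would apply the forgetful map $\jmath^*$ to $\alpha$. Since each $b_i$ sits in $P^*(H^+(BM))$ and $\jmath^*\circ P^*=0$ on $H^+(BM)$, we get $\jmath^*(b_i)=0$. Meanwhile $\jmath^*(e_M(\E_k))=e(\E_k)$, and the explicit formulas for the ordinary Euler classes derived in Section \ref{three} (with the chosen orientations) yield $e(\E_3)=e(\E_1)+e(\E_2)$ in $H^*(X)$. Hence $\jmath^*(\alpha)=0$, and Proposition \ref{le} places $\alpha$ inside the ideal $\langle H^+(BM)\cdot H^*_M(X)\rangle$.

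Next I would pin down the degree-$8$ part of this ideal. Since $H^*(BM)$ has no classes in degrees $2$ or $6$, any product $a\cdot\beta$ with $a\in H^+(BM)$, $\beta\in H^*_M(X)$, and total degree $8$ must satisfy $\deg\beta\in\{0,4\}$. By equivariant formality (which applies because $H^{\rm odd}(X)=0$) together with the fact that $H^k(X)=0$ for $0<k<8$ (see (\ref{dimh})), one has $H^4_M(X)=P^*(H^4(BM))$. So $a\cdot\beta$ always lies in $P^*(H^8(BM))$, and therefore $\alpha=P^*(\xi)$ for some $\xi\in H^8(BM)$.

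Finally I would restrict to $x_0$. Because $P\circ i_{x_0}$ is the identity on $\{x_0\}$, the map $i_{x_0}^*\circ P^*$ is the identity on $H^*(BM)$, so $\alpha|_{x_0}=\xi$. On the other hand, $e_M(\E_k)|_{x_0}=e_M(\E_k|_{x_0})=e_M(\h_{\gamma_k})=b_k$ using (\ref{ek}), so both $e_M(\E_1)+e_M(\E_2)-e_M(\E_3)$ and $b_1+b_2-b_3$ restrict to $b_1+b_2-b_3$, giving $\alpha|_{x_0}=0$. Hence $\xi=0$ and $\alpha=0$. The main obstacle is the middle step: without the precise vanishing of $H^k(X)$ for $0<k<8$ and of $H^2(BM)$ and $H^6(BM)$, the degree-$8$ part of $\ker\jmath^*$ would contain extra classes not coming from $P^*(H^8(BM))$, and the restriction at a single point would not be strong enough to force $\alpha$ to be zero.
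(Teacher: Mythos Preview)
Your proof is correct and follows essentially the same route as the paper: apply $\jmath^*$ to land in $\ker\jmath^*$, use Proposition \ref{le} together with the vanishing $H^k(X)=0$ for $1\le k\le 7$ to conclude the class lies in $P^*(H^*(BM))$, and then identify it by restricting to $x_0$ via $P\circ i_{x_0}=\mathrm{id}$. Your degree-$8$ analysis of $\ker\jmath^*$ is a more explicit version of what the paper compresses into one sentence.
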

\begin{proof} For any $k\in\{1,2,3\}$ we have
$\jmath^*(e_M(\E_k))=e(\E_k)$ (since, by definition, $e_M(\E_k)$ is the Euler class
of a vector bundle over $EM\times_M X$ whose pullback via $\jmath$ is $\E_k$). 
We deduce that 
 $$\jmath^*(e_M(\E_1)+e_M(\E_2)-e_M(\E_3))=e(\E_1)+e(\E_2)-e(\E_3)=0.$$
 From Proposition \ref{le} and the fact that $H^k(X)=\{0\}$ for all
 $1\le k \le 7$ (see Equation (\ref{dimh})) we deduce that 
 $$e_M(\E_1)+e_M(\E_2)-e_M(\E_3)\in P^*(H^*(BM)).$$
The composition $P\circ i_{x_0}$ is the identity function of $\{x_0\}$.
Thus, it is now sufficient to note that
 $$(e_M(\E_1)+e_M(\E_2)-e_M(\E_3))|_{x_0}
 =e_M(\E_1|_{x_0})+e_M(\E_2|_{x_0})-e_M(\E_3|_{x_0})
 =b_1+b_2-b_3,$$
 where we have used Equation (\ref{ek}).
 \end{proof}
 
The following localization result will also be used here.
It will be proved in Subsection \ref{five}.
We recall, see Lemma \ref{fixp}, that 
the fixed points of  the $M$-action on
$X$ are given by
$$X^M=\Sigma_3x_0$$ 

\begin{lemma} 
The restriction  map 
$$H^*_M(X)\to H^*_M(X^M)$$
is injective.
\end{lemma}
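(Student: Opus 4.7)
The plan is to reduce the problem to the $T$-equivariant setting, where $T\subset M$ is the canonical maximal torus, and then exploit the cell decomposition from Proposition \ref{eac} together with the coincidence $X^M=X^T$ established in Proposition \ref{fixp}.

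First, I would show that the $T$-equivariant restriction map $H^*_T(X)\to H^*_T(X^T)$ is injective. For this, the plan is to run a Bia{\l}ynicki-Birula-type argument on the $T$-invariant paving $X=\bigsqcup_{\sigma\in\Sigma_3}C_\sigma$ from Proposition \ref{eac}. Since each $C_\sigma\cong\bC^{n(\sigma)}$ carries a $\bC$-linear action of $T$ whose only fixed point is the origin (corresponding to $\sigma x_0$), an induction on the cells ordered by dimension, combined with the Thom isomorphism on the long exact sequence of each closed pair (closure of a cell, its boundary), will show that $H^*_T(X)$ is a free $H^*(BT)$-module of rank $|\Sigma_3|=6$. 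This equivariant formality together with the Atiyah--Bott--Borel localization theorem then implies that the kernel of the restriction to $X^T$ is trivial.

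Next, I would combine this with Proposition \ref{fixp}, which identifies $X^M=X^T=\Sigma_3 x_0$, and with the standard fact that, for $M$ compact and connected with maximal torus $T$, the natural homomorphism $H^*_M(Y;\bR)\hookrightarrow H^*_T(Y;\bR)^{W_M}$ is injective for every $M$-space $Y$; this follows from the fibration $EM\times_TY\to EM\times_MY$ with fiber $M/T$, using the vanishing $H^{\mathrm{odd}}(M/T;\bR)=0$. Applied to $Y=X$ and $Y=X^M$, this produces injective vertical arrows in the commutative square
\[
\xymatrix{
H^*_M(X)\ar[r]\ar[d] & H^*_M(X^M)\ar[d]\\
H^*_T(X)\ar[r] & H^*_T(X^T),
}
\]
in which the bottom horizontal arrow is injective by the first step. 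Consequently the composition $H^*_M(X)\to H^*_T(X^T)$ is injective, which forces the top horizontal arrow $H^*_M(X)\to H^*_M(X^M)$ to be injective, as required.

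The main obstacle I anticipate is the Bia{\l}ynicki--Birula input: verifying that the cells from Proposition \ref{eac} are arranged so that the closure of each cell lies in the union of cells of no higher dimension, which is what makes the inductive Thom-isomorphism argument go through. Once that compatibility is established, the remainder is a formal diagram chase.
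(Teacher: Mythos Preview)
Your argument is correct, but it takes a different route from the paper. The paper proves the lemma as a direct consequence of the Harada--Henriques--Holm theorem (Theorem \ref{hhh}) applied to the $M$-action itself: Subsection \ref{five} verifies the cell-and-boundary hypotheses (i) and (ii), Lemma \ref{lem} verifies the relative-primality hypothesis (iii) for the $M$-equivariant Euler classes of $V_8$, $S_8^+$, $S_8^-$, and then Theorem \ref{hhh} yields injectivity of $\imath^*$ (together with the image description) in one stroke. Your approach instead reduces to the maximal torus: you establish $T$-equivariant formality from the $\bC$-linear cell structure of Proposition \ref{eac}, invoke classical torus localization to get injectivity of $H^*_T(X)\to H^*_T(X^T)$, and then transfer back to $M$ via the injection $H^*_M(\,\cdot\,)\hookrightarrow H^*_T(\,\cdot\,)$ and the equality $X^M=X^T$ from Proposition \ref{fixp}. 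Your route is more elementary in that it avoids the non-abelian Euler-class computation, but the paper's route is more efficient in context since the same application of Theorem \ref{hhh} simultaneously delivers the GKM description of the image needed for Theorem \ref{princ}(b). Regarding the obstacle you flag: the closure compatibility of the cells is already part of the CW structure asserted in Theorem \ref{dkv}(c), so no extra work is needed there.
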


The strategy we will use in order to justify  (\ref{ege1}) is by
showing for each equation that the two  sides are equal when restricted to
any point in $X^M$. We recall that $X^M$ is equal to the $W$-orbit of $x_0$,
where $W$ acts on $\d^0$ as the 
reflection group of the root system
$\{\pm \gamma_1, \pm\gamma_2, \pm\gamma_3\}.$
Since $\{\gamma_1,\gamma_2\}$ is a simple root system, the reflections
$s_1:=s_{\gamma_1}$ and $s_2:=s_{\gamma_2}$ generate $W$. Moreover,
we have \begin{equation}\label{wey=}W=\{1, s_1, s_2, s_1s_2, s_2s_1, s_1s_2s_1\},\end{equation} where $s_1s_2s_1=s_2s_1s_2$.

\begin{lemma}\label{ther} The restrictions of $e_M(\E_1)$, 
$e_M(\E_2)$, and $e_M(\E_3)$ to
$X^M$ are as follows:

\begin{tabular}{|l|rrrrrr|}
	\hline
$\sigma$& $1$ &  $s_1$    &   $s_2$  & $s_1s_2$   & $s_2s_1$& $s_1s_2s_1$  \\
	\hline
$e_M(\E_1)|_{\sigma x_0}$& $b_1$   & $-b_1$ &$b_3$ & $b_2$  &$-b_3$ & $-b_2$  \\
	\hline
$e_M(\E_2)|_{\sigma x_0}$& $b_2$   & $b_3$ &$-b_2$ & $-b_3$  &$b_1$ & $-b_1$  \\
	\hline
$e_M(\E_3)|_{\sigma x_0}$& $b_3$   & $b_2$ &$b_1$ & $-b_1$  &$-b_2$ & $-b_3$  \\
	\hline
\end{tabular}

\end{lemma}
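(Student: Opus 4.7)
The plan is to localize to each fixed point $\sigma x_0$, identify the restriction $\E_k|_{\sigma x_0}$ with a root space $\h_{\gamma_j} \subset \h_3^0(\O)$ as an oriented $M$-module, and thereby read off $e_M(\E_k)|_{\sigma x_0}$ as $\pm b_j$. To do this, I would fix representatives $n_\sigma \in {\rm F}_4$ of $\sigma \in W$ with $n_\sigma.\d^0 = \d^0$, so that $\E_k|_{\sigma x_0} = n_\sigma.\h_{\gamma_k}$ by \eqref{ek}. Since ${\rm F}_4$ acts on $\h_3^0(\O)$ through the adjoint action of ${\rm E}_{6(-26)}$, conjugation by $n_\sigma$ intertwines $\mathrm{ad}(\d^0)$-eigenspaces, and the characterization \eqref{hb} gives $n_\sigma.\h_{\gamma_k}=\h_{\sigma\gamma_k}=\h_{\gamma_j}$, where $\sigma\gamma_k=\pm\gamma_j$ (using $\h_{-\gamma}=\h_\gamma$). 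The $M$-action on this subspace coming from $M$ fixing $\sigma x_0$ unpacks to the twisted action $v \mapsto n_\sigma \bigl((n_\sigma^{-1} m n_\sigma).v\bigr)$, which agrees with the restriction to $M$ of the natural ${\rm F}_4$-action on the root space $\h_{\gamma_j}\subset \h_3^0(\O)$. Consequently $e_M(\E_k)|_{\sigma x_0} = \pm b_j$, with the sign equal to $+1$ exactly when the orthogonal map $n_\sigma:\h_{\gamma_k}\to \h_{\gamma_j}$ preserves the fixed orientations of Section \ref{three}.

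The determination of the sign is the main obstacle. I would combine three ingredients: (i) by the very choice of orientations in Section \ref{three}, $n_1:\h_{\gamma_2}\to\h_{\gamma_3}$ and $n_2:\h_{\gamma_1}\to\h_{\gamma_3}$ have determinant $+1$; (ii) $M={\rm Spin}(8)$ is connected and acts orthogonally on each $\h_{\gamma_j}$, so every element of $M$ acts there with determinant $+1$---in particular $n_\sigma^2\in M$ when $\sigma$ is an involution, giving $\det(n_\sigma|_{\h_{\gamma_k}\to\h_{\gamma_j}})\cdot\det(n_\sigma|_{\h_{\gamma_j}\to\h_{\gamma_k}})=+1$; (iii) ${\rm F}_4$ is connected and preserves the Euclidean structure on $\h_3^0(\O)$, so ${\rm F}_4\subset SO(\h_3^0(\O))$ and $\det n_\sigma|_{\h_3^0(\O)}=+1$, while $n_\sigma|_{\d^0}$ is the reflection $\sigma$ with $\det=\mathrm{sgn}(\sigma)$, so the product of the three root-space determinants equals $\mathrm{sgn}(\sigma)$.

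Combining (i)--(iii) settles each entry by a short case analysis. For the simple reflections $s_1,s_2$, two of the three root-space determinants are immediate from (i) and (ii), and the third is forced by (iii); for instance, at $\sigma=s_1$ both off-diagonal determinants equal $+1$, so (iii) forces $n_1|_{\h_{\gamma_1}}$ to have determinant $-1$, yielding $e_M(\E_1)|_{s_1 x_0}=-b_1$. For the length-two and length-three Weyl elements I would take $n_{\sigma\tau}=n_\sigma n_\tau$ (well-defined modulo $M$, so by (ii) the signed determinants multiply) and compose the already-computed signs; this fills the remaining columns of the table.
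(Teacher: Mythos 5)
Your proof is correct and, for most entries, follows the same route as the paper: identify $\E_k|_{\sigma x_0}=n_\sigma.\h_{\gamma_k}$ with a root space $\h_{\gamma_j}$ via the characterization (\ref{hb}), use the defining property of the orientations (that $n_1:\h_{\gamma_2}\to\h_{\gamma_3}$ and $n_2:\h_{\gamma_1}\to\h_{\gamma_3}$ preserve orientation), and propagate signs multiplicatively along reduced words, using that representatives of $W$ are unique modulo the connected group $M$, which acts with determinant $+1$ on each root space. The one genuinely different ingredient is how you obtain the ``diagonal'' minus signs, i.e.\ $\det(n_1|_{\h_{\gamma_1}})=\det(n_2|_{\h_{\gamma_2}})=-1$, which force $e_M(\E_1)|_{s_1x_0}=-b_1$ and $e_M(\E_2)|_{s_2x_0}=-b_2$. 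The paper gets these geometrically: $x_0$ and $s_1x_0$ are antipodal on the oriented $8$-sphere $\S_1=p_1^{-1}(d_1^0)$, whose tangent bundle is $\E_1|_{\S_1}$, and the induced orientations on the common tangent subspace $\h_{\gamma_1}$ at antipodal points of an even-dimensional sphere are opposite. You instead argue algebraically: ${\rm F}_4$ is connected and preserves the trace form, so $\det(n_\sigma|_{\h_3^0(\O)})=+1$, while $\det(n_\sigma|_{\d^0})=\mathrm{sgn}(\sigma)$, forcing the product of the three root-space determinants to be $\mathrm{sgn}(\sigma)$; combined with (i) and (ii) this pins down the remaining sign. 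Both arguments are valid; yours has the advantage of treating all of $V_8$, $S_8^+$, $S_8^-$ uniformly and of coming with the built-in consistency check that the signs in each column of the table multiply to $\mathrm{sgn}(\sigma)$, whereas the paper's is more geometric and avoids determinant bookkeeping. The only step you should make explicit is that the determinant of $n_\sigma$ on $\h_{\gamma_1}\oplus\h_{\gamma_2}\oplus\h_{\gamma_3}$ equals the product of the three block determinants even when the blocks are permuted nontrivially: the extra factor is $\mathrm{sgn}(\pi)^{\dim\h_\gamma}=\mathrm{sgn}(\pi)^8=+1$, so evenness of the root-space dimension is quietly used here.
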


\begin{proof} 
We have
$$e_M(\E_1)|_{s_1x_0}=e_M\left(\E_1|_{s_1x_0} \right).$$
By definition,  $\E_1|_{x_0}=\h_{\gamma_1}$.
The points $x_0$ and $s_1x_0$ are  antipodal points of the eight dimensional sphere $\S_1=p_1^{-1}(d_1^0)$,
which is embedded in $X$ (see Section \ref{three}).
By Proposition \ref{themapsp}, the tangent bundle of $\S_1$ is just the restriction of $\E_1$ to $\S_1$.
The orientation of $\E_1$ induces an orientation of the sphere $\S_1$.
The space $\E_1|_{s_1x_0}$
is the same as $\E_1|_{x_0}=\h_{\gamma_1}$, but with the reversed orientation. 
  Consequently,
$$e_M(\E_1)|_{s_1x_0}=-e_M(\h_{\gamma_1})=-b_1.$$

Let us now determine
$$e_M(\E_1)|_{s_2x_0}=e_M\left(\E_1|_{s_2x_0} \right).$$
Like in Section \ref{three}, we consider again $n_2\in {\rm F}_4$ such that $s_2$ is the coset
$[n_2]=n_2{\rm Spin}(8)$ in the Weyl group $W=\Sigma_3$.
By definition, since $s_2x_0=n_2.x_0$,
 we have
$$\E_1|_{s_2x_0}=n_2.\h_{\gamma_1}.$$
Moreover,  $n_2$ is an orientation preserving map from $\h_{\gamma_1}$
to $\E_1|_{s_2x_0}$ (from the way we have oriented $\E_1$ in Section \ref{three}). 
On the other hand, we saw in Section \ref{three} that $n_2$ 
maps $\h_{\gamma_1}$ to $\h_{\gamma_3}$ by preserving the orientation. We deduce that
$$e_M\left(\E_1|_{s_2x_0} \right)=e_M(\h_{\gamma_3})=b_3.$$

We determine now 
$$e_M(\E_1)|_{s_1s_2x_0}=e_M\left(\E_1|_{s_1s_2x_0} \right).$$
Take $n_1\in {\rm F}_4$ such that $s_1=[n_1]=n_1{\rm Spin}(8)$ in $\Sigma_3$.
We have
$$s_1s_2x_0=s_1^{-1}s_2x_0= n_1^{-1}.(n_2.x_0).$$
Thus,
$\E_1|_{s_1s_2x_0}$ is obtained from $\h_{\gamma_1}$ by applying first
$n_2$ (and obtaining  $\h_{\gamma_3}$), followed by $n_1^{-1}$ (which gives $\h_{\gamma_2}$).
Consequently,
$$ e_M\left(\E_1|_{s_1s_2x_0} \right)=e_M(\h_{\gamma_2})=b_2.$$

All other restriction formulae can be  proved similarly.
\end{proof}

The following lemma expresses $e_M(\E_3)$ in terms of $e_M(\E_1)$ and
$e_M(\E_2)$.

\begin{lemma}\label{wehave} We have
$$b_3=b_1+b_2$$
and
$$e_M(\E_3)=e_M(\E_1)+e_M(\E_2).$$
\end{lemma}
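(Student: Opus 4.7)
The plan is to deduce both equalities from Equation (\ref{ege}) together with the restriction table in Lemma \ref{ther}, without needing to invoke any deeper localization result.

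First I would observe that by Equation (\ref{ege}) the element
$\beta := e_M(\E_1)+e_M(\E_2)-e_M(\E_3)\in H^*_M(X)$
coincides with $P^*(b_1+b_2-b_3)$, i.e.\ it lies in the image of the canonical embedding $H^*(BM)\hookrightarrow H^*_M(X)$. The crucial point is that, since $P\circ i_{\sigma x_0}$ is a map from a point to a point, it induces the identity on equivariant cohomology; consequently, for \emph{every} fixed point $\sigma x_0\in X^M$ one has
$$\beta|_{\sigma x_0}=b_1+b_2-b_3.$$

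Next I would compute $\beta|_{\sigma x_0}$ directly from the table in Lemma \ref{ther} at a well-chosen fixed point, for instance $\sigma=s_2$. Reading off the second column (for $s_2$),
$$\beta|_{s_2 x_0}=b_3+(-b_2)-b_1=-b_1-b_2+b_3.$$
Equating the two expressions for $\beta|_{s_2 x_0}$ yields
$$b_1+b_2-b_3=-b_1-b_2+b_3,$$
so $2b_3=2(b_1+b_2)$ and therefore $b_3=b_1+b_2$, proving the first claim. Substituting this back into (\ref{ege}) immediately gives $e_M(\E_1)+e_M(\E_2)=e_M(\E_3)$, proving the second.

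There is essentially no obstacle here: the only content beyond the previous lemma is the remark that an element pulled back from $H^*(BM)$ via $P^*$ restricts to the same class at every $M$-fixed point, which is naturality of $P^*$. One could alternatively verify the equalities by checking that they vanish at all six points of $X^M$ using the table, which works uniformly once one knows $b_3=b_1+b_2$; but extracting the scalar relation from a single fixed point is the most economical route. As a sanity check, any other choice of $\sigma\ne 1$ in the table leads to the same identity $2b_3=2(b_1+b_2)$, reflecting the $\Sigma_3$-symmetry of the situation.
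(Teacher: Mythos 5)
Your argument is correct and is essentially the paper's own proof: the paper restricts Equation (\ref{ege}) to $s_1x_0$ (obtaining $-b_1+b_3-b_2=b_1+b_2-b_3$) where you restrict to $s_2x_0$, and your justification that $P^*(b_k)$ restricts back to $b_k$ at every fixed point is exactly the naturality argument the paper spells out. One small correction to your closing aside: it is not true that every $\sigma\neq 1$ yields the relation --- at $s_1s_2x_0$ and $s_2s_1x_0$ the table of Lemma \ref{ther} gives $b_2-b_3-(-b_1)=b_1+b_2-b_3$ and $-b_3+b_1-(-b_2)=b_1+b_2-b_3$ for the left-hand side, so the restriction of (\ref{ege}) is a tautology there; only $s_1$, $s_2$, and $s_1s_2s_1$ produce $2b_3=2(b_1+b_2)$.
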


\begin{proof}  We take
Equation (\ref{ege}) and restrict both sides to $s_1x_0$.
The left-hand side changes according to   Lemma \ref{ther}. The right-hand side doesn't change. Indeed, for any $k\in \{1,2,3\}$ we have
$$P^*(b_k)|_{s_1x_0}=i_{s_1x_0}^*(P^*(b_k))=
i_{s_1x_0}^*(P^*(e_M(\h_{\gamma_k})))
=(P\circ i_{s_1x_0})^*(e_M(\h_{\gamma_k})),$$
which is the same as the $M$-equivariant Euler class of the pullback of
$\h_{\gamma_k}$ via the map
 $P\circ i_{s_1x_0}:\{s_1x_0\}\to \{x_0\}$; this is equal to $b_k$.
Equation (\ref{ege}) implies
$$-b_1+b_3-b_2=b_1+b_2-b_3,$$
which, in turn, implies the desired equations.
\end{proof}

We are now ready to show that the relations given by Equation  
(\ref{ege1}) hold true. For each of  them we restrict the left-hand side to
$x_0$, $s_1x_0$, $s_2x_0, \ldots, s_1s_2s_1x_0$ and use 
Lemmata \ref{ther} and \ref{wehave}; each time we do this, we obtain
$S_2(2b_1+b_2,-b_1+b_2,-b_1-2b_2)$, respectively
$S_3(2b_1+b_2,-b_1+b_2,-b_1-2b_2)$.
Indeed, let $S$ be one of the (symmetric) polynomials $S_2$ and $S_3$. We have
as follows:
\begin{align*}{}&S(2e_M(\E_1)+e_M(\E_2),-e_M(\E_1)+e_M(\E_2),-e_M(\E_1)-
2e_M(\E_2))|_{s_1x_0}\\
   =&S(-2b_1+b_3,b_1+b_3,b_1-2b_3)\\
   =&S(-b_1+b_2,2b_1+b_2,-b_1-2b_2)\\
  =&S(2b_1+b_2,-b_1+b_2,-b_1-2b_2).
\end{align*}
\begin{align*}{}&S(2e_M(\E_1)+e_M(\E_2),-e_M(\E_1)+e_M(\E_2),-e_M(\E_1)-
2e_M(\E_2))|_{s_2x_0}\\
   =&S(2b_3-b_2,-b_3-b_2,-b_3+2b_2)\\
   =&S(2b_1+b_2,-b_1-2b_2,-b_1+b_2)\\
  =&S(2b_1+b_2,-b_1+b_2,-b_1-2b_2).
\end{align*}
\begin{align*}{}&S(2e_M(\E_1)+e_M(\E_2),-e_M(\E_1)+e_M(\E_2),-e_M(\E_1)-
2e_M(\E_2))|_{s_1s_2x_0}\\
   =&S(2b_2-b_3,-b_2-b_3,-b_2+2b_3)\\
   =&S(-b_1+b_2,-b_1-2b_2,2b_1+b_2)\\
  =&S(2b_1+b_2,-b_1+b_2,-b_1-2b_2).
\end{align*}
\begin{align*}{}&S(2e_M(\E_1)+e_M(\E_2),-e_M(\E_1)+e_M(\E_2),-e_M(\E_1)-
2e_M(\E_2))|_{s_2s_1x_0}\\
   =&S(-2b_3+b_1,b_3+b_1,b_3-2b_1)\\
   =&S(-b_1-2b_2,2b_1+b_2,-b_1+b_2)\\
  =&S(2b_1+b_2,-b_1+b_2,-b_1-2b_2).
\end{align*}
\begin{align*}{}&S(2e_M(\E_1)+e_M(\E_2),-e_M(\E_1)+e_M(\E_2),-e_M(\E_1)-
2e_M(\E_2))|_{s_1s_2s_1x_0}\\
   =&S(-2b_2-b_1,b_2-b_1,b_2+2b_1)\\
   =&S(-b_1-2b_2,-b_1+b_2,2b_1+b_2)\\
  =&S(2b_1+b_2,-b_1+b_2,-b_1-2b_2).
\end{align*}

Our second goal is to  show that $e_M(\E_1)$ and $e_M(\E_2)$  generate $H^*_M(X)$ as an 
$H^*(BM)$-algebra. To this end we first recall that the action of $M$ on $X$ is equivariantly formal. 
From Equation (\ref{dimh}) we deduce that there exists a basis 
$\bar{\alpha}_0,\ldots, \bar{\alpha}_5$
of $H^*_M(X)$ over $H^*(BM)$, such that each
$\bar{\alpha}_k$ is a homogeneous element of degree given by
$$\deg \bar\alpha_k=
\begin{cases}
0, \ {\rm if \ }  k=0\\
8, \ {\rm if \ } k\in \{1,2\}\\
16, {\rm \ if \ } k\in \{3,4\}\\
24, {\rm \ if \ } k=5.
\end{cases}
$$
We need the following lemma.

\begin{lemma}  
There exists a basis $\{\tilde{\alpha}_k \mid k=0,\ldots, 5\}$
of $H^*_M(X)$ as an $H^*(BM)$-module such that:
\begin{itemize}
\item[(i)] if $k\in \{0,\ldots, 5\}$, then both $\tilde{\alpha}_k$ and 
$${\alpha}_k:=\jmath^*(\tilde{\alpha}_k)\in H^*(X)$$
 are homogeneous
of degree given by
$$\deg\tilde\alpha_k=\deg\alpha_k=\deg\bar\alpha_k$$
\item[(ii)] the set $\{\alpha_k \mid k=0,\ldots, 5\}$ is a basis
of $H^*(X)$ over $\bR$
\item[(iii)] we have
$$\tilde{\alpha}_{1}=e_M(\E_1), \ 
\tilde{\alpha}_{2}=e_M(\E_2),$$
and 
$${\alpha}_{1}=e(\E_1), \ 
{\alpha}_{2}=e(\E_2).$$
\end{itemize} 
\end{lemma}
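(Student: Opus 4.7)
The plan is to start with any homogeneous $H^*(BM)$-basis of $H^*_M(X)$ with the prescribed degree profile, provided by equivariant formality, and to swap its two degree-$8$ elements for $e_M(\E_1)$ and $e_M(\E_2)$.

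By the discussion preceding Proposition \ref{le}, the $M$-action on $X$ is equivariantly formal, so $H^*_M(X)$ is free over $H^*(BM)$ of rank $6$. Combined with Equation (\ref{dimh}), this allows one to choose a homogeneous $H^*(BM)$-basis $\bar{\alpha}_0,\ldots,\bar{\alpha}_5$ of $H^*_M(X)$ with degrees $0,8,8,16,16,24$. By Proposition \ref{le}, the map $\jmath^*$ kills $H^+(BM)\cdot H^*_M(X)$ and induces an isomorphism onto $H^*(X)$; consequently the images $\jmath^*(\bar{\alpha}_k)$ form an $\bR$-basis of $H^*(X)$ with the same degrees.

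Next I expand $e_M(\E_i)$ in this basis. Since $e_M(\E_i)$ is homogeneous of degree $8$ and only $\bar{\alpha}_0$, $\bar{\alpha}_1$, $\bar{\alpha}_2$ have degree at most $8$, I can write
$$e_M(\E_i) \;=\; c_{i0}\bar{\alpha}_0 + c_{i1}\bar{\alpha}_1 + c_{i2}\bar{\alpha}_2, \qquad i=1,2,$$
with $c_{i0}\in H^8(BM)$ and $c_{i1},c_{i2}\in H^0(BM)=\bR$. Applying $\jmath^*$ annihilates $c_{i0}\bar{\alpha}_0$ and gives $e(\E_i) = c_{i1}\jmath^*(\bar{\alpha}_1) + c_{i2}\jmath^*(\bar{\alpha}_2)$. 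The crucial input from Theorem \ref{mainth}, specifically Equation (\ref{beta}), is that $e(\E_1)$ and $e(\E_2)$ are $\bR$-linearly independent in $H^8(X)$: the integer matrix relating $(\beta_1,\beta_2)$ and $(e(\E_1),e(\E_2))$ has determinant $3\ne 0$. Hence the real $2\times 2$ matrix $(c_{ij})_{i,j\in\{1,2\}}$ is invertible.

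Setting $\tilde{\alpha}_0:=\bar{\alpha}_0$, $\tilde{\alpha}_1:=e_M(\E_1)$, $\tilde{\alpha}_2:=e_M(\E_2)$, $\tilde{\alpha}_j:=\bar{\alpha}_j$ for $j=3,4,5$, the change-of-basis matrix over $H^*(BM)$ expressing the $\tilde{\alpha}_k$ in terms of the $\bar{\alpha}_k$ is block-triangular with identity on the part indexed by $\{3,4,5\}$, and the remaining $3\times 3$ block has determinant $c_{11}c_{22}-c_{12}c_{21}\in\bR^\times\subset H^*(BM)^\times$. Therefore $\{\tilde{\alpha}_k\}$ is again an $H^*(BM)$-basis of $H^*_M(X)$. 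Property (i) holds by construction of the degrees, (iii) is automatic, and (ii) follows by applying the equivariant formality statement (images of a basis are a basis of $H^*(X)$) to the new basis. No serious obstacle is anticipated; the whole argument is degree bookkeeping combined with the linear independence already contained in Theorem \ref{mainth}.
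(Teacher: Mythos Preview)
Your proof is correct and follows essentially the same approach as the paper's: both arguments replace $\bar\alpha_1,\bar\alpha_2$ by $e_M(\E_1),e_M(\E_2)$ and verify that the resulting change-of-basis matrix over $H^*(BM)$ is invertible by applying $\jmath^*$ and using that $e(\E_1),e(\E_2)$ span $H^8(X)$. The only cosmetic difference is that the paper expresses the $\bar\alpha_i$ in terms of the $e_M(\E_j)$ rather than the other way around, and you are a bit more explicit about the block-triangular structure of the transition matrix.
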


\begin{proof} We 
set
$$\tilde{\alpha}_k
:=
\begin{cases}
\bar{\alpha}_k, \ {\rm if} \ k \neq 1, 2\\
e_M(\E_1), \ {\rm if } \ k=1\\
e_M(\E_2), \ {\rm if } \ k=2.
\end{cases}
$$
It is sufficient to show that 
\begin{align*}
{}&\bar{\alpha}_1=r_{11}e_M(\E_1)+r_{21}e_M(\E_2)+a_{1}\\
{}&\bar{\alpha}_2=r_{12}e_M(\E_1)+r_{22}e_M(\E_2)+a_{2}
\end{align*}
where $r_{11}, r_{21}, r_{12}, r_{22}$ are real numbers such that the matrix
$(r_{ij})_{1\le i,j\le 2}$ is non-singular and
$a_{1}$, $a_{2}$ are in $H^*(BM)$. 
Indeed, we have
$$\jmath^*(e_M(\E_1))=e(\E_1) \quad {\rm and} \quad 
 \jmath^*(e_M(\E_2))=e(\E_2).
$$
The cohomology classes $e(\E_1)$ and $e(\E_2)$ are a basis of $H^8(X)$ (see Section \ref{three}). Also $\jmath^*(\bar\alpha_1)$ and $\jmath^*(\bar\alpha_2)$ are a basis
of $H^8(X)$ (because $\ker \jmath^*=\langle H^+(BM).H^*_M(X) \rangle$).
Thus, we can write
\begin{align*}
{}&\jmath^*(\bar{\alpha}_1)=r_{11}\jmath^*(e_M(\E_1))+r_{21}\jmath^*(e_M(\E_2))\\
{}&\jmath^*(\bar{\alpha}_2)=r_{12}\jmath^*(e_M(\E_1))+r_{22}\jmath^*(e_M(\E_2))
\end{align*}
for some numbers $r_{11}, r_{21}, r_{12}, r_{22}$ such that the matrix
$(r_{ij})_{1\le i,j\le 2}$ is non-singular.
Consequently, the differences $\bar{\alpha}_1-r_{11}e_M(\E_1)-r_{21}e_M(\E_2)$
and $\bar{\alpha}_2-r_{12}e_M(\E_1)-r_{22}e_M(\E_2)$
are linear combinations with coefficients in $H^+(BM)$ of
$\bar\alpha_0,\ldots,\bar\alpha_5$. By dimension reasons,  both of them must live in
$H^+(BM)$.
This finishes the proof.
  \end{proof}

Let us now consider the isomorphism of $H^*(BM)$-modules
$$\Psi: H^*_M(X)\to H^*(X)\otimes H^*(BM)$$
given by
$\Psi(\tilde\alpha_k):=\alpha_k,$
for all $k=0,\ldots, 5$.

From now on we identify 
the $H^*(BM)$-algebra $H^*_M(X)$ with
$H^*(X)\otimes H^*(BM)$ equipped with the product
$\circ$. The latter is defined by the fact that it is $H^*(BM)$-bilinear and it satisfies 
the condition
$$\alpha_k\circ\alpha_\ell:=\Psi^{}(\tilde{\alpha}_k \tilde{\alpha}_\ell),$$
for all $k,\ell \in \{0,\ldots,5\}$.
We stress that 
\begin{equation}\label{iden}H^*_M(X)=(H^*(X)\otimes \bR[a_1,a_2,a_3,a_4], \circ)\end{equation}
as $\bR[a_1,a_2,a_3,a_4]$-algebras, see Equation (\ref{hgb}).
The usual grading of $H^*(X)$ together with
$$\deg a_1=4, \ \deg a_2=\deg a_3=8, \ \deg a_4=12,$$
induces a grading on $H^*(X)\otimes H^*(BM)$.
The following two properties of the product $\circ$ will be used 
later. If $\alpha,\beta \in H^*(X)$ are homogeneous elements, then we have: 
\begin{itemize}
\item[(i)] $\alpha \circ \beta $ is a homogeneous element of
$H^*(X)\otimes H^*(BM)$ 
 of degree given by $\deg ( \alpha \circ \beta) =\deg \alpha +\deg \beta$, 
\item[(ii)]  $\alpha \circ \beta= \alpha\beta +$(a linear combination of multiples of $H^+(BM)$).
\end{itemize}
Point (i) follows from the fact that the map  $\Psi$ is degree preserving. 
To justify point (ii) it is sufficient to take
$\alpha=\alpha_k$ and $\beta=\alpha_\ell$, where $k,\ell \in \{0,\ldots, 5\}$; we use the fact that the following diagram is commutative: 
$$H^*_M(X)\stackrel{\Psi}{\longrightarrow} H^*(X)\otimes H^*(BM)$$
$$\label{pstar} \jmath^* \searrow \ \ \ \ \ \ \  \ \ \ \  \swarrow \ \ \ \ \ \ \ \ \ \ \ \ \ \ \  
$$
$$H^*(X) \ \ \ \ \ \ \ \ \ \ \ \  $$
Here the arrow in the right-hand side is the canonical projection.

\begin{lemma}\label{be} The classes 
$$\ep_1:=e_M(\E_1) \quad {\it and} \quad   
\ep_2:=e_M(\E_2)$$
generate $H^*_M(X)$ as  an $H^*(BM)$-algebra.
Equivalently, in terms of the identification
(\ref{iden}), the classes
$$\ep_1=e(\E_1) \quad {\it and} \quad   
\ep_2 =e(\E_2)$$
generate $(H^*(X)\otimes H^*(BM),\circ)$ as  an $H^*(BM)$-algebra.
\end{lemma}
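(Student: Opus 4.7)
\medskip

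\noindent\textbf{Proof proposal.} The plan is to use the identification (\ref{iden}), which realizes $H^*_M(X)$ as $H^*(X)\otimes H^*(BM)$ equipped with the product $\circ$, and to combine it with Lemma \ref{allc}, which asserts that $H^*(X)$ is generated as a ring by $\alpha_1=e(\E_1)$ and $\alpha_2=e(\E_2)$. The key technical input will be property (ii) of $\circ$ stated just above: for homogeneous $\alpha,\beta\in H^*(X)$ one has $\alpha\circ\beta=\alpha\beta+r$ with $r\in H^*(X)\otimes H^+(BM)$. Iterating this identity, for any $q\in\mathbb{R}[x,y]$ the difference $q^\circ(\ep_1,\ep_2)-q(e(\E_1),e(\E_2))$ lies in $H^*(X)\otimes H^+(BM)$, where the superscript ${}^\circ$ means that the polynomial $q$ is evaluated at $\ep_1,\ep_2$ using the product $\circ$ (and powers taken in the $\circ$-sense).

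I would then induct on the total degree $d$ to show that every homogeneous $\gamma\in H^d_M(X)$ is of the form $F^\circ(\ep_1,\ep_2)$ for some $F\in H^*(BM)[x,y]$. The base case $d=0$ is trivial. For the inductive step, decompose
$$\gamma=\gamma_0+\gamma_+,\qquad \gamma_0\in H^d(X)\otimes 1,\ \ \gamma_+\in H^*(X)\otimes H^+(BM),$$
using the direct sum $H^d_M(X)=\bigoplus_{i+j=d}H^i(X)\otimes H^j(BM)$. By Lemma \ref{allc}, write $\gamma_0=q(e(\E_1),e(\E_2))$ for some homogeneous $q$ of degree $d$. Then $\gamma-q^\circ(\ep_1,\ep_2)$ equals $\gamma_+$ minus the correction term $q^\circ(\ep_1,\ep_2)-\gamma_0$, and by the discussion above both lie in $H^*(X)\otimes H^+(BM)$.

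Therefore $\gamma-q^\circ(\ep_1,\ep_2)$ can be written as a finite sum $\sum_j\delta_j\cdot \tilde a_j$ with $\tilde a_j\in H^+(BM)$ and $\delta_j\in H^{d_j}(X)$, where $d_j=d-\deg\tilde a_j<d$. Viewing $\delta_j\otimes 1$ as a homogeneous class of degree $d_j$ in $H^*_M(X)$, the inductive hypothesis gives $Q_j\in H^*(BM)[x,y]$ with $\delta_j\otimes 1=Q_j^\circ(\ep_1,\ep_2)$; multiplying by $\tilde a_j\in H^*(BM)$ yields $\delta_j\cdot\tilde a_j=(\tilde a_jQ_j)^\circ(\ep_1,\ep_2)$. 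Summing, $\gamma=\bigl(q+\sum_j\tilde a_jQ_j\bigr)^{\!\circ}(\ep_1,\ep_2)$, closing the induction.

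The main obstacle is not conceptual but bookkeeping: one must carefully distinguish the ordinary cup product on $H^*(X)$ from the product $\circ$ on $H^*(X)\otimes H^*(BM)$, and verify that property (ii) persists under iteration so that the discrepancy between $q^\circ(\ep_1,\ep_2)$ and $q(e(\E_1),e(\E_2))$ always lies in $H^*(X)\otimes H^+(BM)$. Once this is in place, the induction on degree is essentially automatic and produces the desired polynomial expression.
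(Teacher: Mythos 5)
Your argument is correct and follows essentially the same route as the paper: both rest on Lemma \ref{allc}, the identification (\ref{iden}), and property (ii) of $\circ$, using an induction on degree in which the discrepancy between the $\circ$-evaluation and the cup-product evaluation of a polynomial is absorbed into $H^*(X)\otimes H^+(BM)$ and handled by the inductive hypothesis. The only (immaterial) difference is that you induct on the cohomological degree of an arbitrary homogeneous class, whereas the paper inducts over the fixed basis $\alpha_0,\dots,\alpha_5$.
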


\begin{proof} 
It is sufficient to prove that for any $k\in \{0,\ldots, 5\}$,
$\alpha_k$   can be written as a polynomial expression in $\ep_1$ and $\ep_2$ with coefficients
in $H^*(BM)$, the product being $\circ$.  
We prove this by induction on $k$. 
The claim is obvious for $k=0$,
as $\alpha_0$ is just a number (element of $H^0(X)$).
  Let us now make the induction step:
  take $k\in\{0,\ldots,5\}$, $k\ge 1$. 
We know that $\ep_1$ and $\ep_2$  generate
  $H^*(X)$ (see Lemma \ref{allc}).
 Thus, we have
  $$\alpha_k = f(\ep_1,\ep_2),$$
  where $f$ is a polynomial in two variables and the product in 
  the right hand side is the usual (cup) product.
  Let $ f^{\circ}(\ep_1,\ep_2)$ be the element of $H^*(X)\otimes H^*(BM)$ 
  obtained by evaluating $f$ in terms of the product $\circ$.
  By property (ii) of $\circ$, $\alpha_k-f^{\circ}(\ep_1,\ep_2)$  is a linear combination of
  terms of the form $a. \alpha_\ell$, where $a\in H^+(BM)$ and
  $\ell \in \{0,\ldots, 5\}$ with $\deg \alpha_\ell < \deg \alpha_k$.
  The last condition implies $\ell <k$:
  we only need to use the induction hypothesis.
\end{proof}


The following lemma will finish the proof of Theorem \ref{main}.
\begin{lemma}\label{ideal} The ideal of relations in 
$H^*_M(X)$ with respect to  
$\ep_1$ and $\ep_2$ is generated by  
 (\ref{ege1}).
\end{lemma}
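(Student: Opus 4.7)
My plan is to present $H^*_M(X)$ as a quotient of a polynomial ring over $H^*(BM)$ and then reduce the question to the non-equivariant version already established in Lemma \ref{allc}, via a graded Nakayama argument. First I would set up the surjection: let $A := H^*(BM)[y_1, y_2]$ with $\deg y_i = 8$, and let $J \subset A$ be the ideal generated by
$$R_i(y_1,y_2) := S_i(2y_1+y_2,\, -y_1+y_2,\, -y_1-2y_2) - S_i(2b_1+b_2,\, -b_1+b_2,\, -b_1-2b_2),$$
for $i=2,3$. By the explicit restriction computations preceding this lemma, the $R_i$ are carried to zero by the assignment $y_i \mapsto \ep_i$, and Lemma \ref{be} says that this assignment extends to a surjective homomorphism of graded $H^*(BM)$-algebras. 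Hence we obtain a graded surjection $\Phi : A/J \twoheadrightarrow H^*_M(X)$, and the lemma is equivalent to the statement $\ker\Phi = 0$.

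Next I would reduce modulo $H^+(BM)$. Because $b_1, b_2 \in H^+(BM)$, the images of $R_2$ and $R_3$ in the $\bR$-algebra $(A/J)/\langle H^+(BM)\rangle \cong \bR[y_1,y_2]/\bar J$ are precisely the relations $S_i(2y_1+y_2,\, -y_1+y_2,\, -y_1-2y_2)$ appearing in Lemma \ref{allc}. Consequently $\bR[y_1,y_2]/\bar J \cong H^*(X)$ via $y_i \mapsto e(\E_i)$. Combining this with the isomorphism $H^*_M(X)/\langle H^+(BM)\cdot H^*_M(X)\rangle \cong H^*(X)$ provided by Proposition \ref{le}, the induced map
$$\bar\Phi : (A/J)/\langle H^+(BM)\rangle \longrightarrow H^*_M(X)/\langle H^+(BM)\cdot H^*_M(X)\rangle$$
is an isomorphism of graded $\bR$-algebras.

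Finally I would apply graded Nakayama. Set $K := \ker\Phi$. Right-exactness of tensoring with $H^*(BM)/H^+(BM) = \bR$ applied to the short exact sequence $0 \to K \to A/J \to H^*_M(X) \to 0$ of graded $H^*(BM)$-modules gives $K/\langle H^+(BM)\cdot K\rangle = 0$, i.e.\ $K = H^+(BM)\cdot K$. Since $K$ is a non-negatively graded $H^*(BM)$-module with $K_0 = 0$ (as $\Phi$ is an isomorphism in degree $0$), a minimal-degree nonzero element of $K$ would admit no representation as an $H^+(BM)$-linear combination of strictly lower-degree elements of $K$, so graded Nakayama forces $K = 0$.

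The main thing to check carefully is the reduction step: one must verify that when $b_1, b_2$ are killed, the ideal $\bar J$ agrees on the nose with the relation ideal of Lemma \ref{allc}, so that the non-equivariant ring $H^*(X)$ is recovered exactly. This is immediate from $b_1, b_2 \in H^+(BM)$, but it is the point where the theorem genuinely depends on Lemma \ref{allc}; everything else in the argument is formal, and I do not anticipate any serious obstacle beyond bookkeeping.
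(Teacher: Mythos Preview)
Your overall strategy is sound and differs from the paper's: the paper argues by explicit induction on the $(y_1,y_2)$-degree of a relation $f$, peeling off the top-degree part using Lemma \ref{allc} and then rewriting each occurrence of $g_i$ as $f_i$ plus a correction in $H^+(BM)$. Your Nakayama argument packages the same idea more structurally, and both approaches rest on Lemma \ref{allc} as the essential input.

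There is, however, a genuine gap in your final step. From the short exact sequence $0\to K\to A/J\to H^*_M(X)\to 0$, right-exactness of $-\otimes_{H^*(BM)}\bR$ only yields exactness of
\[
K\otimes\bR \longrightarrow (A/J)\otimes\bR \xrightarrow{\;\bar\Phi\;} H^*_M(X)\otimes\bR \longrightarrow 0,
\]
so the isomorphism $\bar\Phi$ tells you that the \emph{image} of $K\otimes\bR$ in $(A/J)\otimes\bR$ vanishes, not that $K\otimes\bR$ itself vanishes. You need one more ingredient: $H^*_M(X)$ is a \emph{free} $H^*(BM)$-module (equivariant formality, stated just before Proposition \ref{le}), hence $\mathrm{Tor}_1^{H^*(BM)}(H^*_M(X),\bR)=0$, and the tensored sequence is exact on the left as well. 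With that in place your conclusion $K/H^+(BM)K=0$ follows, and the graded Nakayama step finishes the proof as you wrote it. Equivalently: lift a basis of $H^*(X)$ to six homogeneous elements of $A/J$; their images under $\Phi$ form an $H^*(BM)$-basis of the free module $H^*_M(X)$, so $\Phi$ admits an $H^*(BM)$-linear section hitting a generating set of $A/J$, forcing $\Phi$ to be injective.
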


\begin{proof} 
Let us consider the polynomials $g_2, g_3\in \bR[x_1,x_2]$ given by
\begin{equation}g_i=S_i(2x_1+x_2,-x_1+x_2,-(x_1+2x_2))\end{equation}
$i=2,3$.
We prove that if $f(x_1,x_2)\in H^*(BM)\otimes \bR[x_1,x_2]$ such that
\begin{equation}\label{f}f^{\circ}(\ep_1,\ep_2)=0,\end{equation}
then $f$ is in the ideal generated by the  polynomials
\begin{align*}f_i(x_1,x_2):=g_i(x_1,x_2)-g_i(b_1,b_2),\end{align*}
$i=2,3$ (here  $f^\circ(\ep_1,\ep_2)$ is the element of $H^*(X)\otimes H^*(BM)$ obtained by evaluating  $f(x_1,x_2)$ on $\ep_1$, $\ep_2$ in the ring
$(H^*(X)\otimes H^*(BM),\circ)$).
We prove this claim by induction on
$\deg f$: throughout this proof, {\it degree} will always be considered
only with respect to $x_1$ and $x_2$.  If $\deg f=0$ then the claim is obvious.
Let us now perform the induction step.
We consider a non-constant polynomial $f(x_1,x_2)$ as above,
satisfying Equation (\ref{f}).
Let $h(x_1,x_2)$ be the component of $f(x_1,x_2)$ of highest degree
(with respect to $x_1,x_2$).
From the fact that $f^{\circ}(\ep_1,\ep_2)=0$ 
 and property (ii) of $\circ$  we deduce that
 $$h(\ep_1,\ep_2)=0,$$
 the product involved in the left-hand side being the usual (cup) product. 
 By Lemma \ref{allc},  $h(x_1,x_2)$ is a combination with
 coefficients in $H^*(BM)\otimes \bR[x_1,x_2]$ 
 of $g_2(x_1,x_2)$ and
 $g_3(x_1,x_2)$.
 We come back to Equation (\ref{f}) and
 replace $h$ by the expression mentioned in the previous sentence,
 where we complete each occurence of $g_i$ 
 to $f_i$ (by adding and subtracting the necessary quantity). 
 The cancellations which we obtain
 allow us to obtain another condition of type
 (\ref{f}), this time with a polynomial $f$ of
 degree strictly smaller than the previous one.  
Finally, we use the induction hypothesis.
\end{proof}

\begin{rem}\label{tria2} Like in Remark \ref{tria1}, we denote by ${\mathcal E}^-_3$ the vector
bundle $\E_3$ with the reversed orientation. Set 
\begin{align*}{}&\tilde{x}_1:=\frac{1}{3}(e_M(\E^-_3)-e_M(\E_2)), \ \tilde{x}_2:=\frac{1}{3}(e_M(\E_1)-e_M(\E^-_3)),  \ \tilde{x}_3:=\frac{1}{3}(e_M(\E_2)-e_M(\E_1)),   \\
{}& u_1:=\frac{1}{3}(-b_3-b_2), \  u_2:=\frac{1}{3}(b_1+b_3), \ u_3:=\frac{1}{3}(b_2-b_1).
\end{align*}
Theorem \ref{main} can be  rephrased by saying that $H^*_M({\rm Fl}(\O))$ is generated as an
$H^*(BM)$-algebra by $\tilde{x}_1, \tilde{x}_2, \tilde{x}_3$, subject to the following relations:
$$S_i(\tilde{x}_1, \tilde{x}_2, \tilde{x}_3)= S_i(u_1, u_2, u_3), \ i=2,3.$$
A similar description holds for $H^*_T({\rm Fl}_3(\bC))$, see Appendix \ref{lasts}. 
\end{rem}

\begin{rem}  Another presentation of the ring $H^*_M(X)=H^*_{{\rm Spin}(8)}({\rm F}_4/{\rm Spin}(8))$  can  be
deduced from \cite[Corollary 5.10]{Ho-Sj}, since ${\rm F}_4$ and ${\rm Spin}(8)$ have the same rank.   
\end{rem}

\section{Presentations of Goresky-Kottwitz-MacPherson type}

\subsection{A theorem of Harada, Henriques, and Holm}\label{sec:hhh}

Motivated by the well-known result of Goresky, Kottwitz, and MacPherson, see \cite{Go-Ko-Ma}, concerning
the  equivariant cohomology of complex projective varieties that are acted on by tori, Harada, Henriques, and Holm
considered in \cite{Ha-He-Ho} actions of arbitrary topological groups along with equivariant cohomology theories
associated to them. They obtained descriptions of the corresponding (cohomology) rings for spaces 
equipped with a certain stratification. We will confine ourselves here to state a weaker version 
of their main result, which is strictly what we need  
in order to prove Theorems \ref{princ} and \ref{kth}. 
If $M$ is an arbitrary  compact connected Lie group and $X$ a space acted on by $M$, we denote by $E^*_M(X)$ the 
$M$-equivariant cohomology ring with real coefficients or the $M$-equivariant complex topological $K$-theory ring of $X$:
 the result is valid for both $H^*_M(\cdot)$ and $K^*_M(\cdot)$.
 
  \begin{theo}\label{hhh} {\rm (\cite{Ha-He-Ho})} Let $$X=\bigsqcup_{k=1}^sC_k$$ be a finite CW complex
 whose open cells $C_k$, $1\le k\le s$, satisfy the following properties:
 \begin{itemize}
 \item[(i)] $C_k$ is an even dimensional real vector space
 equipped with an $M$-linear action with  a unique fixed point, say $p_k$,
 which is identified with $0$.
 \item[(ii)] We can decompose 
 \begin{equation}\label{ck} C_k=\bigoplus_{1\le \ell \le k}C_{k\ell},\end{equation}
 where $C_{k\ell}$ are vector subspaces (possibly equal to $\{0\}$) of $C_k$; the boundary
 $\partial_X (C_{k\ell})$ of $C_{k\ell}$ in $X$ consists of only one point, which is  fixed
by the $M$-action (in the case where $C_{k\ell}=\{0\}$, the fixed point is $p_k$).
\item[(iii)]  For any $k\in \{1,\ldots, s\}$, 
the equivariant Euler classes $e_M(C_{k\ell})$, where $1\le \ell \le k$ such that $C_{k\ell}\neq \{0\}$, 
are relatively prime elements of $E^*_M({\rm pt.})$ (here we regard $C_{k\ell}$ as a vector bundle
over a point).
   \end{itemize}
   Then the map $\imath^*:E_M^*(X)\to E^*_M(X^M)$ induced by the 
   inclusion of the $M$-fixed point set $X^M$ into $X$ is injective. Moreover, 
   the image of $\imath^*$ consists of all 
   $$(f_k)\in E^*_M(X^M)=\prod_{k=1}^sE^*_M({\rm pt.})$$ 
   such that
   $f_k-f_\ell$ is divisible by $e_M(C_{k\ell})$ for all $1\le \ell< k\le s$ with
   $C_{k\ell}\neq \{0\}$.  
  \end{theo}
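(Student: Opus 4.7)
The natural strategy is induction on the number of cells via the filtration $X_k := \bigsqcup_{j \le k} C_j$ by closed subsets (obtained by ordering the cells so that boundaries lie in lower-indexed strata; condition (ii) guarantees that this is possible). The base case $X_1 = \{p_1\}$ is trivial. All the work is in the inductive step from $X_{k-1}$ to $X_k$, where we need to compare $E^*_M(X_k)$, $E^*_M(X_{k-1})$, and the fixed-point tuples.

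The key computational input is the long exact sequence of the pair $(X_k, X_{k-1})$ in $E^*_M$. Excision identifies $E^*_M(X_k, X_{k-1}) \cong E^*_M(C_k, C_k \setminus \{p_k\})$, and the equivariant Thom isomorphism (applicable because $C_k$ is an $M$-representation with unique fixed point) identifies this with $E^*_M(\mathrm{pt})$, the generator being the equivariant Thom class of $C_k$. Its image in $E^*_M(X_k)$ restricts to $e_M(C_k) = \prod_\ell e_M(C_{k\ell})$ at $p_k$ and to zero at every other fixed point. Since $e_M(C_k)$ is a non-zero-divisor in $E^*_M(\mathrm{pt})$, the long exact sequence collapses to
$$0 \to E^*_M(\mathrm{pt}) \xrightarrow{\,\cdot\, e_M(C_k)\,} E^*_M(X_k) \to E^*_M(X_{k-1}) \to 0.$$
Injectivity of $\imath^*$ for $X_k$ then follows by a standard five-lemma chase, using $X_k^M = X_{k-1}^M \sqcup \{p_k\}$, the injectivity of $\imath^*$ for $X_{k-1}$ (inductive hypothesis), and the fact that the left-hand term is detected by restriction to $p_k$.

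The necessary direction of the image description is local: for each $\ell$ with $C_{k\ell}\neq\{0\}$, restrict a class in $E^*_M(X_k)$ to the closure of $C_{k\ell}$, an $M$-invariant sphere in $X_k$ whose only fixed points are $p_k$ and $p_\ell$, and apply the standard one-dimensional GKM argument to conclude $e_M(C_{k\ell}) \mid (f_k - f_\ell)$. The main obstacle is sufficiency. Given a tuple $(f_j)_{j\le k}$ satisfying the divisibility conditions, the sub-tuple $(f_j)_{j<k}$ lifts to some $\alpha \in E^*_M(X_{k-1})$ by induction, which in turn lifts to some $\tilde\alpha \in E^*_M(X_k)$; the short exact sequence shows that the possible values of $\tilde\alpha|_{p_k}$ form a single coset of the principal ideal $(e_M(C_k))$ in $E^*_M(\mathrm{pt})$. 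Restriction of $\tilde\alpha$ to each closed sub-sphere through $p_k$ and $p_\ell$ already forces $\tilde\alpha|_{p_k}\equiv f_\ell \pmod{e_M(C_{k\ell})}$, hence $\tilde\alpha|_{p_k}\equiv f_k \pmod{e_M(C_{k\ell})}$ for each $\ell$, and the task reduces to promoting these individual congruences to a single congruence modulo the product $\prod_\ell e_M(C_{k\ell}) = e_M(C_k)$. This is precisely where hypothesis (iii) on pairwise relative primality of the $e_M(C_{k\ell})$ enters, via a Chinese Remainder Theorem argument in $E^*_M(\mathrm{pt})$; this step is the genuine technical heart of the proof.
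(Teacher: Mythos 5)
The paper does not actually prove this theorem: it is stated as a quotation (in weakened form) of the main result of Harada--Henriques--Holm \cite{Ha-He-Ho}, so there is no in-paper proof to compare against. Your sketch is a faithful reconstruction of the argument in that reference: the induction over the stratification $X_k=\bigsqcup_{j\le k}C_j$, the collapse of the long exact sequence of $(X_k,X_{k-1})$ to a short exact sequence via the Thom class and the non-zero-divisor property of $e_M(C_k)=\prod_\ell e_M(C_{k\ell})$, the one-point-compactification (sphere) argument for the necessity of the divisibility conditions, and the coprimality step promoting the congruences modulo each $e_M(C_{k\ell})$ to one modulo their product are exactly the ingredients of their proof. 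The only points worth making explicit are that the final step requires $E^*_M({\rm pt.})$ to be a ring in which pairwise coprime non-zero-divisors satisfy the ``divisible by each implies divisible by the product'' property (true in the cases used here, since $H^*(BM;\mathbb{R})$ is a polynomial ring and $R[T]$ a Laurent polynomial ring, both UFDs), and that each $e_M(C_{k\ell})$ must indeed be a non-zero-divisor, which is implicit in hypothesis (iii) and verified in the paper by the explicit Euler class computations.
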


\subsection{The CW complex structure of ${\rm Fl}(\O)$}\label{five}

We aim to apply Theorem \ref{hhh} to the special case of 
$X:={\rm Fl}(\O)$ and $M:={\rm Spin}(8)$. In this section we are concerned with 
assumptions (i) and (ii) in that theorem. More precisely, recall that $$ 
X:={\rm Fl}(\O)
={\rm F}_4.x_0,$$
where $x_0={\rm Diag}(x_1^0,x_2^0, x_3^0)$,
with $x_1^0,x_2^0,x_3^0\in \bR$, any two distinct, 
such that $x_1^0+x_2^0+x_3^0=0$;
this time we also assume that 
$x_2^0<x_3^0<x_1^0$, i.e., $\gamma_1(x^0)>0$ and $\gamma_2(x_0)>0$. 
 We use the CW decomposition of ${\rm Fl}(\O)$
 described by Equation (\ref{cwdec}).  That is, we choose $C_k:=C_\sigma$, for $\sigma\in \Sigma_3$.
The splitting (\ref{ck}) is the one described by Equation (\ref{cs}). Assumption (i) in Theorem
\ref{hhh} follows from Proposition \ref{fixp} and the fact that
 $C_\sigma\cap \Sigma_3x_0=\{\sigma x_0\}$. 
 For assumption (ii), we will need the explicit embedding of $C_\sigma$ in
 $X$, as given in Theorem \ref{dkv} (b). That is, we consider the root spaces
 $\g_{\gamma} \subset \eg_{6(-26)}$, where $\gamma\in \Phi$, as well as the diffeomorphism
 $\sum\g_\gamma \to X$, $x\mapsto \exp(x)(\sigma x_0)$, where the sum in the domain runs over all $\gamma\in \Phi^+$ such that $\sigma^{-1}\gamma \in \Phi^-$. 
 Assumption (ii) follows readily from the following lemma.
 
 \begin{lem} If $\sigma$ and $\gamma$ are as above, then the boundary  of 
 $\exp(\g_\gamma)(\sigma x_0)$ in $X$ is $\{s_\gamma \sigma x_0\}$.
 \end{lem}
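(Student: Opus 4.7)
The plan is to realize the closure of $\exp(\g_\gamma)(\sigma x_0)$ in $X$ as an embedded $\O{\rm P}^1\cong S^8$, inside which $\exp(\g_\gamma)(\sigma x_0)$ is the complement of a single point, and then to identify that point as $s_\gamma\sigma x_0$.

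First, I use ${\rm F}_4$-equivariance to reduce to the basepoint $x_0$. Pick $n\in{\rm F}_4$ representing $\sigma^{-1}\in W$, so that $n\cdot\sigma x_0=x_0$ and ${\rm Ad}(n)\g_\gamma=\g_{\sigma^{-1}\gamma}$; setting $\delta:=-\sigma^{-1}\gamma\in\Phi^+$, the homeomorphism $n$ of $X$ transports the lemma to the following special case: for every $\delta\in\Phi^+$, the boundary of $\exp(\g_{-\delta})(x_0)$ in $X$ is $\{s_\delta x_0\}$.

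To exhibit the ambient $S^8$, let $d_\delta\in\O{\rm P}^2$ denote the appropriate ``vertex'' ($d_\delta=d_1$ for $\delta=\gamma_1$, $d_\delta=d_2$ for $\delta=\gamma_2$, and $d_\delta:={\rm Diag}(0,0,1)$ for $\delta=\gamma_3$). The stabilizer ${\rm Spin}(9)_\delta:=({\rm F}_4)_{d_\delta}$ contains ${\rm Spin}(8)=({\rm F}_4)_{x_0}$ (since ${\rm Spin}(8)$ fixes $\d$ pointwise by Equation (\ref{fix})), so the orbit $F_\delta:={\rm Spin}(9)_\delta\cdot x_0\cong{\rm Spin}(9)/{\rm Spin}(8)\cong\O{\rm P}^1\cong S^8$ is a compact submanifold of $X$ with $T_{x_0}F_\delta=\h_\delta$. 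An infinitesimal check using the root space decomposition of $\eg_{6(-26)}$ shows that the $\g_{-\delta}$-vector fields on $X$ are tangent to $F_\delta$ along $F_\delta$, so $\exp(\g_{-\delta})(x_0)\subset F_\delta$. Since the map $\g_{-\delta}\cong\bR^8\to F_\delta\cong S^8$, $v\mapsto\exp(v)x_0$, is a smooth embedding by the cell structure (\ref{cs}), its image is open in $F_\delta$ and diffeomorphic to $\bR^8$, and therefore its complement in $F_\delta$ is a single point $p_\infty$.

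To identify $p_\infty$, consider the one-parameter group $A_\delta:=\exp(\bR h_\delta)$, where $h_\delta\in\d^0$ is the coroot of $\delta$. Since $h_\delta$ normalizes $\g_{-\delta}$, $A_\delta$ preserves $\exp(\g_{-\delta})(x_0)$ and hence fixes $p_\infty$. As $h_\delta$ lies in the Lie algebra of the maximal torus $T\subset{\rm Spin}(8)$, it follows that $p_\infty\in X^T=\Sigma_3 x_0$ by Proposition \ref{fixp}. Of the two $A_\delta$-fixed points on $F_\delta\cong S^8$, namely $x_0$ and its antipode, the antipode is precisely $s_\delta x_0$: indeed, the little Weyl group of the rank-one symmetric pair $(\mathfrak{spin}(9),\mathfrak{spin}(8))$ underlying $F_\delta$ is generated by $s_\delta$, which exchanges the two $A_\delta$-poles of the sphere. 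Hence $p_\infty=s_\delta x_0$, completing the proof.

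The main obstacle will be the two claims that $\exp(\g_{-\delta})(x_0)\subset F_\delta$ and that $F_\delta\cong\O{\rm P}^1$. For $\delta=\gamma_1,\gamma_2$ both are immediate from Proposition \ref{themapsp}, where $F_\delta$ is realized as the $p_i$-fiber through $x_0$ and tangent directions are computed via Proposition \ref{foral}. For $\delta=\gamma_3$ one needs the triality-rotated ${\rm Spin}(9)$-subgroup of ${\rm F}_4$ stabilizing $d_3$ together with a corresponding $p_3$-type projection, which exists by the $\Sigma_3$-symmetry of ${\rm F}_4/{\rm Spin}(8)$ emphasized throughout the paper.
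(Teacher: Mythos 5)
Your overall picture --- realize the closure of the cell as an embedded $S^8$ and then locate the one missing point --- is essentially the picture behind the paper's own proof (its Case~2 identifies the closure with a rank-one flag manifold $X^\gamma\cong S^8$), but three steps do not hold as written, and the first of them is the crux of the whole lemma. The central claim $\exp(\g_{-\delta})(x_0)\subset F_\delta$ is exactly the hard point, and your ``infinitesimal check'' is not supplied. For $\delta$ simple it is genuine: $\g_{-\delta}$ lies in the standard parabolic $P_\delta\supset MAN$ with $P_\delta\cap{\rm F}_4\cong{\rm Spin}(9)$, so $\exp(\g_{-\delta})$ preserves the fibers of the ${\rm E}_{6(-26)}$-equivariant map $G/MAN\to G/P_\delta$. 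But for $\delta=\gamma_3=\gamma_1+\gamma_2$ there is no parabolic containing $MAN$ whose Levi factor contains $\g_{\pm\gamma_3}$; the triality-rotated projection $p_3$ is only ${\rm F}_4$-equivariant, not ${\rm E}_{6(-26)}$-equivariant, so $\exp(\g_{-\gamma_3})$ does not act on its base, and the tangency of the $\g_{-\gamma_3}$-vector fields to its fibers does not follow from the $\Sigma_3$-symmetry you invoke. This is precisely why the paper resorts in its Case~2 to the rank-one reduction (the subgroup $G^\gamma$ generated by $\exp(\g_{\pm\gamma})$, its Iwasawa decomposition, and the identification of the closure with the orbit $K^\gamma.h_\gamma$). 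Your argument can be repaired without that machinery by conjugating the root instead of the base point: a representative of $s_2$ carries $\exp(\g_{-\gamma_3})(x_0)$ to $\exp(\g_{-\gamma_1})(s_2x_0)$, which lies in the $p_1$-fiber through $s_2x_0$ because $\gamma_1$ is simple.

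Two further steps need attention. First, ``open in $F_\delta\cong S^8$ and diffeomorphic to $\bR^8$, therefore the complement is a single point'' is false (an open hemisphere is a counterexample); you need density of the cell in $F_\delta$, or better the CW structure of Theorem \ref{dkv}~(c): since there are no cells in dimensions $1$ through $7$, the boundary of the $8$-cell lies in the $0$-skeleton $\Sigma_3x_0$, and $F_\delta\cap\Sigma_3x_0=\{x_0,s_\delta x_0\}$ with $x_0$ interior to the cell, which forces $p_\infty=s_\delta x_0$. Second, your identification of $p_\infty$ rests on the assertion that $h_\delta$ lies in the Lie algebra of the maximal torus $T\subset{\rm Spin}(8)$; it does not: $h_\delta\in\a\subset\s=\h_3^0(\O)$, whereas ${\rm Lie}(T)\subset\k=\fg_4$. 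So the appeal to Proposition \ref{fixp} and to $X^T=\Sigma_3x_0$ is unjustified as written; the skeleton argument above makes this step unnecessary.
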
 
 
\begin{proof} Let us consider  again $\Phi=\{\pm \gamma_1,\pm \gamma_2,\pm\gamma_3\}$, which is a root system of type
$A_2$. The corresponding Weyl group $W$, see  (\ref{weylg}),  is isomorphic to $\Sigma_3$.
It contains the reflections $s_{\gamma_i}$ about $\ker \gamma_i$, $i=1,2,3$.
Each of those is a transformation of $\d^0$ which permutes the three coordinates of any vector
in the following  way:
 \begin{equation}\label{sgama} s_{\gamma_1}=(2,3), \ 
s_{\gamma_2}=(1,3), \ s_{\gamma_3}=(1,2).\end{equation} 
Here, as usual, $(i,j)$ denotes the $i,j$
transposition in $\Sigma_3$. In fact, $W$  is generated by 
$s_1:=s_{\gamma_1}$ and $s_2:=s_{\gamma_2}$.
The precise description of $W$ is given by Equation (\ref{wey=}).
We will need the following table, which gives for every $\sigma \in \Sigma_3$ the set of all 
$\gamma \in \Phi^+=\{\gamma_1,\gamma_2,\gamma_3\}$ such that $\sigma^{-1}\gamma\in \Phi^-$.  

\begin{tabular}{|l|l|}
	\hline
$\sigma$& $\gamma$   \\
	\hline
$s_1$& $\gamma_1$    \\

$s_2$ & $\gamma_2$     \\

$s_2s_1$ & $\gamma_2$, $\gamma_3$     \\

$s_1s_2$& $\gamma_1$, $\gamma_3$     \\	

$s_1s_2s_1$& $\gamma_1$, $\gamma_2$, $\gamma_3$ \\
\hline
\end{tabular}

\noindent Here we have used the formulae:
$s_k(\gamma_k)=-\gamma_k$ for $k=1,2$,
$s_1(\gamma_2)=s_2(\gamma_1)=\gamma_3$,
$s_1(\gamma_3)=\gamma_2$, and $s_2(\gamma_3)=\gamma_1$.

 \begin{figure}[h]
\begin{center}
\epsfig{figure=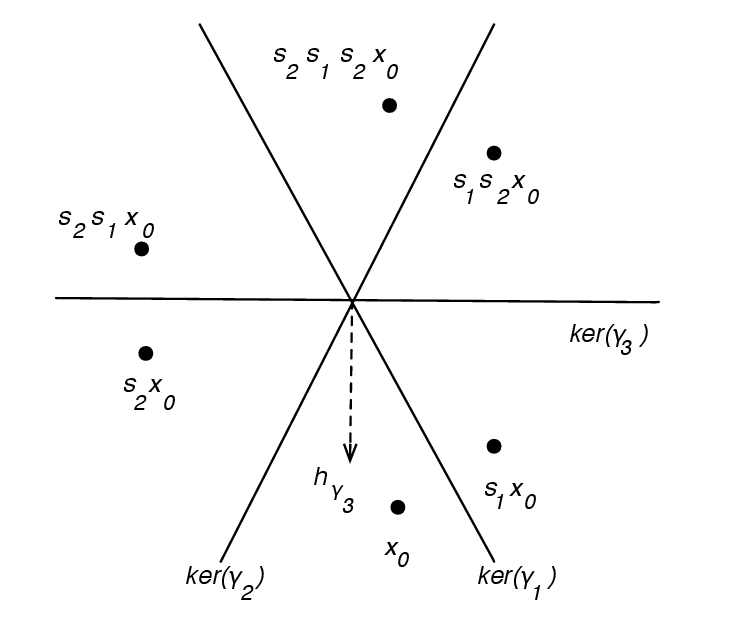, width=3.5in}
\end{center}
\begin{center}
{\rm Figure 1.}
\end{center}
\end{figure}

Let us discuss in detail the following two situations. 

\noindent {\it Case 1.} $(\sigma,\gamma)=(s_1,\gamma_1)$. 
We need to  show that the boundary of $\exp(\g_{\gamma_1})(s_1 x_0)$ is $x_0$.
To this end, we note that $\exp(\g_{\gamma_1})(s_1 x_0)$ is a Schubert cell
(see Appendix \ref{lasts1}).
Thus, by \cite[Section 4, especially Equation (4.10)]{Du-Ko-Va}, 
its closure 
consists of the cell itself together with the 0 dimensional cell $\{x_0\}$.

\noindent {\it Case 2.} $(\sigma,\gamma)=(s_1s_2s_1, \gamma_3)=(s_{\gamma_3},\gamma_3)$. We now show that the boundary of $\exp(\g_{\gamma_3})(s_3 x_0)$ is $\{x_0\}$. To simplify notations, we set  
$$G:={\rm E}_{6(-26)}, \ K:={\rm F}_4,  \ \g:=\eg_{6(-26)}, \ 
\k:=\fg_4,  \ \s := \h_3^0(\O),  \ {\rm and} \  \gamma_3:=\gamma.$$
As usual, we denote $M={\rm Spin}(8)$. We also denote by $N$ and $A$  the connected Lie
subgroups of $G$ of Lie algebras $\g_{\gamma_1}+\g_{\gamma_2}+ \g_{\gamma_3}$,
respectively $\a$ 
(the notations above have been used in the general case in Appendix \ref{lasts1}).
We will use the rank-one reduction procedure, as described in \cite[Ch.~IX, Section 2]{He}. Let us denote by $\g^{\ga}$ the Lie subalgebra of $\g$ generated by 
$\g_{\ga}$ and $\g_{-\ga}$. 
 Take $h_{\ga} \in\a$ determined by
$\langle h_{\ga} , h \rangle=\gamma(h)$, for all $h\in\a$
(here $\langle \ , \ \rangle$ is the Killing form of $\g$). We have the Cartan decomposition
$$\g^{\ga} = \k^{\ga} \oplus \s^{\ga},$$
where $\k^{\ga} = \k \cap \g^{\ga}$ and $\s^{\ga} = \s \cap \g^{\ga}$
(see also Equation (\ref{card})).
The space $\bR h_{\ga}$ is maximal abelian in $\s^{\ga}$. Let $G^{\ga}$,
$K^{\ga}$, and $A^{\ga}$ denote the   connected Lie subgroups of 
$G$ of Lie algebras $\g^{\ga}, \k^{\ga}$, respectively $\bR h_{\ga}$. 
Then we have $K^{\ga} = K\cap G^{\ga}$ and $A^\ga =A\cap G^\ga$. 
Moreover, if $M^{\ga}$ denotes the centralizer
 of $h_{\ga}$ in $K^{\ga}$, then we have
 $M^{\ga}=M\cap G^{\ga}$.  The connected Lie subgroup  of
 $G^\ga$ of Lie algebra $\g_\ga$ is $N^\ga = G^\ga \cap N$. The Iwasawa
 decomposition of $G^\ga$ is
 $$G^\ga = K^\ga A^\ga N^\ga.$$
 Without loss of generality we may assume that 
 $x_0=h_{\ga}$: since the last two vectors are in the same  Weyl chamber
 (see Figure 1), their $K$-orbits  are $G$-equivariantly diffeomorphic.  
 Consequently, we have $s_{\gamma_3}x_0=-h_\ga$. 
The orbit $X^\ga:=K^{\ga}.h_{\ga}$ is contained in $X=K.h_{\ga}$ (for both orbits, the group
action is the Adjoint one). 
In fact, the inclusion is $G^{\ga}$-equivariant. Indeed, the action of $G$ on $X$ is induced by the
identification $X=G/MAN$. Consequently, the subgroup  $G^{\ga}$ of $G$
acts on $X$ and the orbit of the coset of $e$  is $G^\ga/(MAN\cap G^\ga)=G^\ga/(M^\ga A^\ga N^\ga)=X^\gamma$
(here we have used that the map $K\times A \times N \to G$,
$(k,a,n)\mapsto kan$, for all $(k,a,n)\in K\times A \times N$ is a diffeomorphism).
The Schubert cell decomposition of $X^\ga$ described in Theorem \ref{dkv} (b) is
$$X^\ga=\exp (\g_\ga)(-h_\ga) \bigsqcup \{ h_\ga\}.$$
Thus, the  cell $\exp (\g_\ga)(-h_\ga)$  is dense in $X^\ga$. We deduce that the closure of
$\exp (\g_\ga)(-h_\ga)$ in $X$ is equal to $X^\ga$. This finishes the proof.

The other cases follow immediately from the two above. For instance, to show that the boundary of 
$\exp(\g_{\gamma_1})(s_1s_2x_0)$ is $s_1(s_1s_2x_0)=s_2x_0$ we use Case 2. 
Indeed, we replace  $x_0$ by $s_2x_0$ and $s_1,s_2$ by $s_2,s_3$ respectively
 (reflections about the walls of the Weyl chamber which contains $s_2x_0$). 
\end{proof}

\subsection{The root structure of ${\rm Spin}(8)$}\label{rootspi}

It remains to verify assumption (iii) in Theorem \ref{hhh} for the cell decomposition (\ref{cwdec})
and the splittings (\ref{cs}) in two situations: equivariant cohomology and equivariant $K$-theory. 
This will be done by calculating explicitly the corresponding Euler classes. 
We  need the following description of the roots,
weights, and of the representation ring of ${\rm Spin}(8)$. The details can be found for instance in \cite[Ch.~V, Section 6 and Ch. ~VI, Section 6]{Br-tD}
or \cite[Ch.~4]{Ad}. 
The Lie algebra of ${\rm Spin}(8)$ is the space $\mathfrak{so}(8)$ of all
skew-symmetric $8\times 8$ matrices  whose entries are real numbers.
Recall that  by $T$ we have denoted the canonical maximal torus of ${\rm Spin}(8)$ (see Proposition \ref{eac}). 
Its Lie algebra, call it $\t$,  consists of all matrices 
of the form
 $$
\left(
\begin{array}{ccccccccccccc}
0 & \theta_1 & & & \\
-\theta_1 & 0 &  & \ \  0   & \\
 &  & \ddots &   \\
 & 0 \ \ \ \ &  &  0 & \theta_4\\
 & &  &  -\theta_4 &0
\end{array}
\right),
$$ 
where $\theta_1,\theta_2,\theta_3,\theta_4\in \bR$. 
For any $j\in \{1,2,3,4\}$ we denote by $L^j$  the linear function on $\t$ which assigns to
each matrix of the form above the number $\theta_j$. The set $\{L^1, L^2, L^3, L^4\}$ is a basis of the dual space $\t^*$.

\begin{itemize}
\item The set of roots of ${\rm Spin}(8)$ with respect to
$\t$ is $$\Phi_{{\rm Spin}(8)} = \{ \pm L^{i}\pm L^{j}  \mid  1 \leq i < j \leq 4 \} .$$
\item A simple root system is 
$$\Pi = \{  L^1 - L^2 ,    L^2 - L^3,  L^{3} - L^{4},  L^3 +L^4\}.$$
The corresponding set of positive roots is 
$$\Phi_{{\rm Spin}(8)}^+ = \{ L^{i}\pm L^{j}  \mid 1 \leq i < j \leq 4 \} .$$
\item 
The corresponding fundamental weights are:   
\begin{equation}\label{weight}\rho_1 = L^1, \ \rho_2 = L^1 +L^2, \ \rho_3 = \frac{L^1+L^2+L^3-L^4}{2}, \   \rho_4 = \frac{L^1+L^2+L^3+L^4}{2}.\end{equation}
Since ${\rm Spin}(8)$ is simply connected, these weights are a basis of the  lattice $\mathfrak{t}_{\mathbb{Z}}^*$ of integral forms.
We will also use the presentation 
$$\mathfrak{t}_{\mathbb{Z}}^*=\oplus_{1 \leq i \leq 5 }\mathbb{Z}\omega^i /(2 \omega^5- \omega^1 -  \omega^2 - \omega^3 -  \omega^4),$$ where we have denoted as follows:
$$\omega^1 := L^1, \ \omega^2 := L^2, \ \omega^3 := L^3, \  \omega^4 := L^4, \ \omega^5:= \frac{L^1+L^2+L^3+L^4}{2}.$$
As usual, to any integral form $ \lambda \in \mathfrak{t}_{\mathbb{Z}}^*$ corresponds the
character $e^{\lambda} \in R[T]$.
In this way, if we denote  $y_j:=e^{\omega^{j}}$,  $1 \leq j \leq 5$, we obtain the following presentation: $$R[T] \cong\mathbb{Z}[y_1^{\pm1}, y_2^{\pm1}, y_3^{\pm1}, y_4^{\pm1}, y_5^{\pm1}]/(y_5^2 - y_1y_2y_3y_4).$$

\item The canonical action of the Weyl group $W_{{\rm Spin}(8)}=N_{{\rm Spin}(8)}(T)/T $
on $\t^*$ is faithful. The linear automorphisms  of $\mathfrak{t}^*$ induced in this way 
are those   $\eta$ with the property that for any $1\leq i \leq 4$, there exists $1\leq j \leq 4$ 
such that $\eta(L^{i})=\pm L^{j}$, the number of ``$-$" signs being even.

\item The  representation ring of ${\rm Spin}(8)$ is $R[{\rm Spin}(8)]=\mathbb{Z}[X_1,X_2,X_3,X_4]$
where
$$X_1=V_8\otimes \bC,  \ X_2=S_8^+\otimes \bC, \ X_3=S_8^-\otimes \bC,$$
and $X_4$ is the complexified adjoint representation of ${\rm Spin}(8)$
(recall that $V_8$ is induced by the standard representation of ${\rm SO}(8)$ on $\bR^8$
via the covering map ${\rm Spin}(8)\to {\rm SO}(8)$ and $S_8^{\pm}$ are the real half-{spin} representations of
${\rm Spin}(8)$). 
Their weights are as follows  (see \cite[Proposition 4.2]{Ad}):
\begin{itemize}
\item[(i)] For $X_1$:
\begin{equation}\label{x1} \pm L^1, \pm L^2, \pm L^3, \ {\rm and } \pm L_4. \end{equation} 
\item[(ii)] For $X_2$:
\begin{equation}\label{x3}  \frac{\pm L^1\pm L^2\pm L^3\pm L^4}{2},
\end{equation}
where
the number of ``$-$" signs is even.
\item[(iii)] For $X_3$:
\begin{equation}\label{x2}\frac{\pm L^1\pm L^2\pm L^3\pm L^4}{2},
\end{equation} where
the number of ``$-$" signs is odd.
\item[(iv)] For $X_4$:
all  roots of ${\rm Spin}(8)$ relative to $T$.
\end{itemize}
The (complex) dimension of each weight space is equal to 1.
Consequently, the restriction/inclusion map $ R[{\rm Spin}(8)] =R[T]^{W_{{\rm Spin}(8)}}
\rightarrow R[T] $ is given by 
\begin{align*}X_1 =& y_1+ y_1^{-1}  +  y_2 + y_2^{-1} +  y_3 + y_3^{-1}+  y_4 + y_4^{-1} \\ 
 X_2 =& y_5 +  y_5y_1^{-1}y_2^{-1} +  y_5y_1^{-1}y_3^{-1} + y_5y_1^{-1}y_4^{-1} +
 y_5y_2^{-1}y_3^{-1} + y_5y_2^{-1}y_4^{-1}+ y_5y_3^{-1}y_4^{-1} \\ 
{}&+ y_5y_1^{-1}y_2^{-1}y_3^{-1}y_4^{-1}\\
 X_3 =& y_5y_1^{-1} +  y_5y_2^{-1} +  y_5y_3^{-1} + y_5y_4^{-1} +
 y_5y_1^{-1}y_2^{-1}y_3^{-1} + y_5y_1^{-1}y_2^{-1}y_4^{-1} + y_5y_1^{-1}y_3^{-1}y_4^{-1} \\{}&+ y_5y_2^{-1}y_3^{-1}y_4^{-1}\\
 X_4 =& \sum_{1 \leq i < j \leq 4} y_i^{\pm 1} y_j^{\pm 1}.
 \end{align*}
 
\end{itemize}

 Recall that the $T$-actions on $V_8$ and $S_8^{\pm}$ are $\bC$-linear relative
 to certain complex linear structures on these spaces, see Proposition \ref{eac}.
 We would like now to calculate the weights of each of these three $T$-modules.
 For $V_8$ they are
$L_1$, $L_2$, $L_3$, and $L_4$. 
The ${\rm Spin}(8)$-module  $S_8^+$ differs from $V_8$ by  a group automorphism of ${\rm Spin}(8)$,
see \cite[Ch.~14]{Ad}. 
This is just one of the outer automorphisms that arise from the many symmetries of the
Dynkin diagram of ${\rm Spin}(8)$. It leaves $T$ invariant and the induced automorphism 
of $\t$ is the reflection $s_{\omega_5-\omega_4}$ through $\ker (\omega_5-\omega_4)$ (equip $\t$ with the inner product
which makes $(\theta_1, \theta_2, \theta_3, \theta_4)$ into an orthonormal coordinate system).
Thus, the weights  of $S_8^+$  are: 
\begin{align*}
{}& s_{\omega_5-\omega_4}(L_1)=\omega_1+\omega_4-\omega_5= \rho_1-\rho_3 \\
{}& s_{\omega_5-\omega_4}(L_2)=\omega_2+\omega_4-\omega_5= -\rho_1+\rho_2-\rho_3\\ 
{}& s_{\omega_5-\omega_4}(L_3)=\omega_3+\omega_4-\omega_5=-\rho_2+\rho_4 \\
{}& s_{\omega_5-\omega_4}(L_4)=\omega_5= \rho_4.
\end{align*} 
A similar reasoning holds for $S_8^-$, the automorphism of $\t$ being this time 
$s_{\omega_5}$. The resulting weights are:
\begin{align*}{}&s_{\omega_5}(L_1) =\omega_1-\omega_5=\rho_1-\rho_4  \\
{}& s_{\omega_5}(L_2)=\omega_2-\omega_5=-\rho_1+\rho_2-\rho_4 \\
{}& s_{\omega_5}(L_3)=   \omega_3-\omega_5=\rho_3-\rho_2 \\
{}& s_{\omega_5}(L_4)=   \omega_4-\omega_5=-\rho_3.
\end{align*}

\subsection{Equivariant cohomology of ${\rm Fl}(\O)$} 
We can now calculate the Euler classes $e_M(C_{k\ell})$ mentioned in Theorem \ref{hhh}
for $M={\rm Spin}(8)$ and $X={\rm Fl}(\O)$.  They are the ${\rm Spin}(8)$-equivariant Euler classes of
$V_8$ and $S_8^\pm$. If $V$ is any of these representations, then we can split
$V=\bigoplus_{i=1}^4 \ell_i$, where $\ell_i$ are 1-dimensional  $T$-invariant complex vector subspaces,
see Proposition \ref{eac}. 
Consequently,
$$e_T(V)=c_4^T(\bigoplus_{i=1}^4\ell_i)=c_1^T(\ell_1)\cdots c_1^T(\ell_4),$$
where $c_4^T$ and $c_1^T$ denote the $T$-equivariant Chern classes.
We know that the 1-dimensional complex representations
of $T$ are labeled by the character group ${\rm Hom}(T,S^1)$, and
the map ${\rm Hom}(T,S^1)\to H^2(BT;\bZ)$ given by $L\mapsto c_1^T(L)$ is  
a group isomorphism (see for example \cite[Ch.~20, Section 11]{Hu}). 
In turn, ${\rm Hom}(T,S^1)$ is isomorphic to the  lattice of integral forms on $\t$.
The formulae obtained at the end of Subsection \ref{rootspi} thus give readily descriptions of $e_T(V_8)$ and $e_T(S_8^\pm)$
as elements of $H^*_T({\rm pt.}) = H^*(BT)= S(\t^*)$. 
On the other hand, there is a  canonical inclusion 
$H^*(B{\rm Spin}(8)))\hookrightarrow H^*(BT)$, which maps ${\rm Spin}(8)$-equivariant to $T$-equivariant  Euler classes.
We deduce:
\begin{align}
\label{ermsp1}{}&e_{{\rm Spin}(8)}(V_8)=  
\rho_1(-\rho_1+\rho_2)(-\rho_2+\rho_3+\rho_4)(-\rho_3+\rho_4)
\\
\label{ermsp2}{}&e_{{\rm Spin}(8)}(S_8^+)=(\rho_1-\rho_3)(-\rho_1+\rho_2-\rho_3)(-\rho_2+\rho_4)\rho_4
\\
{}&e_{{\rm Spin}(8)}(S_8^-)=
-(\rho_1-\rho_4)(-\rho_1+\rho_2-\rho_4)(\rho_3-\rho_2)\rho_3.
\end{align}

Since $H^*_{{\rm Spin}(8)}({\rm pt.})=H^*(B{{\rm Spin}(8)})=\bR[\rho_1, \rho_2,\rho_3, \rho_4]^{W_{{\rm Spin}(8)}}$, we have shown:

\begin{lem}\label{lem}   The equivariant
Euler classes $e_{{\rm Spin}(8)}(V_8)$, $e_{{\rm Spin}(8)}(S_8^-)$, and $e_{{\rm Spin}(8)}(S_8^+)$  
are pairwise relatively prime elements
 of $H^*_{{\rm Spin}(8)}({\rm pt.})$.
 \end{lem}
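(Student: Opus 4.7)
The plan is to work in the larger polynomial ring $H^*(BT) = \bR[\rho_1,\rho_2,\rho_3,\rho_4]$, into which $H^*_{{\rm Spin}(8)}({\rm pt.})$ is embedded as the $W_{{\rm Spin}(8)}$-invariant subring. Since both rings have the same units (the nonzero real constants), any common factor of two elements of $H^*_{{\rm Spin}(8)}({\rm pt.})$ computed inside the invariant subring is also a common factor computed inside $H^*(BT)$. So it suffices to establish pairwise coprimality in $H^*(BT)$, which is a unique factorization domain in which every nonzero linear form is prime.

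Next I would use the fact that each of the three Euler classes is already displayed as a product of four linear forms: this is exactly the content of $(\ref{ermsp1})$, $(\ref{ermsp2})$, and the display immediately following, which record the images of $e_{{\rm Spin}(8)}(V_8)$, $e_{{\rm Spin}(8)}(S_8^+)$, and $e_{{\rm Spin}(8)}(S_8^-)$ under the inclusion $H^*(B{\rm Spin}(8)) \hookrightarrow H^*(BT)$. Because nonzero linear forms are prime in $H^*(BT)$, two such products share a nontrivial common factor if and only if some linear form appearing in the first is a scalar multiple of some linear form appearing in the second. The whole problem therefore reduces to a finite check on the $4+4+4=12$ linear forms, and it is cleanest to perform it after rewriting these as the $T$-weights of the chosen complex structures on $V_8$ and $S_8^\pm$ in the basis $L^1, L^2, L^3, L^4$ of $\t^*$, as computed at the end of Subsection \ref{rootspi}.

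The weights of $V_8$ are $L^1, L^2, L^3, L^4$, each with exactly one nonzero coefficient, while every weight of $S_8^+$ or $S_8^-$ has all four $L^i$-coefficients equal to $\pm \tfrac{1}{2}$. A trivial shape comparison therefore rules out any proportionality between a weight of $V_8$ and a weight of either spin representation, settling two of the three pairs at once.

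The remaining case, $S_8^+$ versus $S_8^-$, is the only delicate one and is what I expect to be the main obstacle, since here the weights have identical shapes. The crux is the parity rule from Subsection \ref{rootspi}: weights of $S_8^+$ have an even number of minus signs among the four coefficients, while weights of $S_8^-$ have an odd number. Any proportionality $w^+ = c\, w^-$ must have $c = \pm 1$ because all coefficients on both sides have absolute value $\tfrac{1}{2}$. The case $c=+1$ is immediately impossible by the different parities, and the case $c=-1$ is impossible because negating all four coordinates sends ``even number of minus signs'' to $4 - \text{odd} = \text{odd}$, again contradicting the source parity. This parity mismatch, which is a manifestation of the triality symmetry permuting $V_8, S_8^+, S_8^-$, is precisely what forces the three Euler classes to be pairwise coprime, and finishes the argument.
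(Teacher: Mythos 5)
Your argument is correct and takes essentially the same route as the paper: both write the three Euler classes as products of the (linear) weights inside $H^*(BT)=\bR[\rho_1,\rho_2,\rho_3,\rho_4]$ and conclude coprimality from the fact that no weight of one representation is proportional to a weight of another, the paper leaving that last non-proportionality check implicit while you carry it out via the shape and sign-parity of the weights. One small slip worth fixing: in excluding $w^+=-w^-$ you say negation sends ``even number of minus signs'' to $4-\text{odd}=\text{odd}$; the correct statement is that negation sends $k$ minus signs to $4-k$, hence preserves parity (as $4$ is even), which still gives the desired contradiction between the even parity of $S_8^+$-weights and the odd parity of $S_8^-$-weights.
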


 We are now in a position to prove Theorem \ref{princ}.

 \noindent{\it Proof of Theorem \ref{princ}.} From Theorem \ref{hhh}
 we deduce that the map $\imath^*:H^*_M({\rm Fl}(\O))\to H^*_M(\Sigma_3x_0)=
 \prod_{\sigma\in\Sigma_3}H^*(BM)$ is injective.
 Moreover, its image consists of those $(f_\sigma)_{\sigma\in\Sigma_3}$ with the following property:
 
 \noindent {\it (P1) }
 $f_\sigma-f_{s_\gamma\sigma}$ is divisible by
$e_M(\h_\gamma)$ for any  $\sigma \in \Sigma_3$ and any $\gamma \in\Phi^+$ such that $\sigma^{-1}\gamma \in\Phi^-$. 

\noindent Condition (P1) is equivalent to:  

\noindent {\it (P2)} $f_\sigma-f_{s_\gamma\sigma}$ is divisible by
$e_M(\h_\gamma)$ for any  $\sigma \in \Sigma_3$ and any  $\gamma \in\Phi^+$. 

\noindent Indeed, (P2) implies (P1). Also (P1) implies (P2):
assume that (P1) holds true and take $\sigma\in\Sigma_3$ and $\gamma\in\Phi^+$ such that $\sigma^{-1}\gamma \in\Phi^+$;
then we have $s_\gamma(s_\gamma\sigma) =\sigma$ and also 
$(s_\gamma \sigma)^{-1}\gamma=-\sigma^{-1}\gamma,$
which is in $\Phi^-$; thus, by (P1), the difference $f_{s_\gamma\sigma}-f_\sigma$ is divisible by 
$e_M(\h_\gamma)$. 

Finally, recall that $\tb_{i}-\tb_j=e_M(\h_{\gamma_k})$, where
$1\le i<j\le 3$ and $\{k\}=\{1,2,3\}\setminus \{i, j\}$ (see Equations (\ref{eulercl}),  (\ref{conse}), (\ref{ek}), and
(\ref{letusdef}), as well as  Proposition \ref{foral} and Lemma 
\ref{wehave}). 
We also take  into account Equation (\ref{sgama}).
\hfill $\square$ 

The next lemma is relevant for the observation made right after Theorem \ref{princ}.
First recall from the introduction that $H^*(BM)=\bR[a_1,a_2,a_3,a_4]$, where  $a_1\in H^4(BM)$, $a_2,a_3\in H^8(BM)$,
 and $a_4\in H^{12}(BM)$.

\begin{lem} \label{lasr} The elements $a_1^2$, $\tb_1$, and $\tb_2$ of $H^8(BM)$ are linearly
independent. Consequently, we have
$$H^*(BM)=\bR[a_1, \tb_1, \tb_2, a_4].$$
\end{lem}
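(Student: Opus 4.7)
The plan is to work inside $H^*(BT)=\bR[L^1,L^2,L^3,L^4]$ via the standard embedding $H^*(BM)=H^*(BT)^{W_M}\hookrightarrow H^*(BT)$, and to reduce the statement to a routine linear algebra check in the $3$-dimensional $\bR$-vector space $H^8(BM)$. From the description of $H^*(BM)$ recalled in Subsection \ref{rootspi}, the $D_4$-invariants of polynomial degree $4$ (cohomological degree $8$) form a $3$-dimensional space with the natural basis
$$q_1:=\Bigl(\sum_{i=1}^4(L^i)^2\Bigr)^{\!2},\qquad q_2:=\sum_{1\le i<j\le 4}(L^i)^2(L^j)^2,\qquad q_3:=L^1L^2L^3L^4,$$
and $a_1^2$ is a nonzero scalar multiple of $q_1$. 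Thus it suffices to show that the coordinate matrix of $(a_1^2,\tb_1,\tb_2)$ relative to the basis $(q_1,q_2,q_3)$ is invertible.

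First I would compute $\tb_1$ and $\tb_2$ explicitly as polynomials in the $L^i$. By Proposition \ref{eac} and the list at the end of Subsection \ref{rootspi}, the weights of the complex $T$-module $V_8$ are $L^1,L^2,L^3,L^4$, so
$$\tb_1=b_1=e_M(V_8)=L^1L^2L^3L^4=q_3.$$
Using the four weights of $S_8^+$ computed in the same subsection, I get
$$-16\,\tb_2=16\,e_M(S_8^+)=(L^1-L^2-L^3+L^4)(-L^1+L^2-L^3+L^4)(-L^1-L^2+L^3+L^4)(L^1+L^2+L^3+L^4).$$

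The next step is to expand this four-fold product. Grouping the first factor with the last and the second with the third and applying $(u-v)(u+v)=u^2-v^2$ twice reduces the product to a polynomial in the $(L^i)^2$ together with a cross term proportional to $q_3$; carrying this out yields an expression of the form
$$\tb_2=\alpha\,q_1+\beta\,q_2+\gamma\,q_3$$
with $\beta\ne 0$ (a short bookkeeping check gives $\beta=1/4$). Combined with $\tb_1=q_3$ and $a_1^2\in\bR^{\times}\cdot q_1$, the change-of-basis matrix from $(a_1^2,\tb_1,\tb_2)$ to $(q_1,q_2,q_3)$ is, up to rescaling rows, triangular with nonzero diagonal entries, hence invertible. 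This establishes the linear independence. The polynomial presentation $H^*(BM)=\bR[a_1,\tb_1,\tb_2,a_4]$ follows formally: since $H^*(BM)=\bR[a_1,a_2,a_3,a_4]$ with the stated degrees and $\{a_1^2,\tb_1,\tb_2\}$ spans the degree-$8$ piece, the elements $\tb_1,\tb_2$ may replace $a_2,a_3$ as polynomial generators.

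The only real obstacle is the combinatorial expansion of the four-fold product above, but this is mechanical; the essential point to extract is simply that the coefficient of $q_2$ in $\tb_2$ is nonzero, which can be verified by reading off the coefficient of a single monomial such as $(L^1)^2(L^2)^2$ in the expanded product.
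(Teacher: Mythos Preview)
Your argument is correct. The coordinate computation goes through: writing everything in the basis $(q_1,q_2,q_3)$ of degree-$4$ $W_M$-invariants, one has $a_1^2\in\bR^\times q_1$, $\tb_1=q_3$, and (after the expansion you outline) $\tb_2=-\tfrac{1}{16}q_1+\tfrac{1}{4}q_2-\tfrac{1}{2}q_3$, so the coordinate matrix has nonzero determinant. One small phrasing issue: ``up to rescaling rows, triangular'' is not quite right---what you actually have, after reordering $(a_1^2,\tb_2,\tb_1)$ against $(q_1,q_2,q_3)$, is a lower-triangular matrix with nonzero diagonal. The logic is sound regardless, since the essential point is that $\tb_2$ is the only one of the three with a nonzero $q_2$-component.

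The paper's own proof takes a different, less computational route: rather than expanding the product defining $e_M(S_8^+)$, it observes that both $e_M(V_8)$ and $e_M(S_8^+)$ (written as polynomials in the fundamental weights $\rho_i$) vanish along the line $\rho_1=\rho_2=\rho_3=0$, whereas $a_1$, being a nonzero multiple of the quadratic form $\sum (L^i)^2$, vanishes only at the origin. Hence $a_1^2$ cannot be a linear combination of $\tb_1$ and $\tb_2$, and the independence of $\tb_1,\tb_2$ themselves is immediate from their irreducible factorizations. Your approach has the virtue of being completely explicit and self-contained; the paper's vanishing argument is shorter and avoids the four-fold expansion, at the cost of invoking the geometric interpretation of $a_1$.
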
 

\begin{proof} As before, we regard $H^*(BM)$ as a subspace of $H^*(BT) = \bR[\rho_1,\rho_2,\rho_3,\rho_4]$. 
One can see that $\tb_1$ and $\tb_2$ are, up to a possible negative sign, just the Euler classes
$e_{{\rm Spin}(8)}(V_8)$ and  $e_{{\rm Spin}(8)}(S_8^+)$, respectively. Concretely, these are given by
Equations (\ref{ermsp1}) and (\ref{ermsp2}), respectively: observe that those two polynomials are linearly independent.
Assume now that there exists a linear combination of them which is equal to $a_1^2$.  
Recall that $a_1$, regarded as a (polynomial) function on $\t$, is nothing but the norm squared of a vector.
In particular, the only zero of $a_1$ is for $\rho_1=\rho_2=\rho_3=\rho_4=0$. 
On the other hand, the polynomials given by (\ref{ermsp1}) and (\ref{ermsp2}) vanish 
whenever $\rho_1=\rho_2=\rho_3=0$ and $\rho_4$ is arbitrary. This is a contradiction.\end{proof} 

\subsection{Equivariant $K$-theory of ${\rm Fl}(\O)$}\label{lastsec}

Our aim here is to prove Theorem \ref{kth}. Unlike in the previous section, we will apply 
the Harada-Henriques-Holm theorem for the $T$-action, rather than the ${\rm Spin}(8)$-action.
We then take into account that $K_{{\rm Spin}(8)}({\rm Fl}(\O))=K_T({\rm Fl}(\O))^{W_{{\rm Spin}(8)}}$.

The first step in the proof is made by 
calculating the $K$-theoretical  $T$-equivariant Euler classes of $C_\sigma$, see Equation (\ref{cwdec}).
By (\ref{cs}), this amounts to calculating  $e^K_T(V_8), e^K_T(S_8^-)$, and $e^K_T(S_8^+)$, 
where the superscript $K$ stands for $K$-theory.
Recall that the Euler class of a direct sum is the product of the
Euler classes of the summands; also, if a torus acts on $\bC$ with weight $\lambda$, then
the resulting equivariant $K$-theoretical Euler class is $1-e^{-\lambda}$ (see e.g.~\cite[Note, p.~35]{Bott}).
From the expressions of the weights which we have obtained at the end of Subsection \ref{rootspi} we
obtain:
\begin{align*}
{}& e_{T}^K(V_8) = (1-y_1^{-1}) (1-y_2^{-1}) (1-y_3^{-1})(1-y_4^{-1})\\
{}& e_{T}^K(S_8^+) =(1-y_1^{-1}y_4^{-1}y_5)(1-y_2^{-1}y_4^{-1}y_5)(1-y_3^{-1}y_4^{-1}y_5)
(1-y_5^{-1})\\
{}& e_{T}^K(S_8^-) = (1-y_1^{-1}y_5)(1-y_2^{-1}y_5)(1-y_3^{-1}y_5)(1-y_4^{-1}y_5).
\end{align*}
We immediately deduce:

\begin{lem} The $K$-theoretical equivariant Euler classes $e^K_T(V_8), 
e^K_T(S_8^+)$, and  $e^K_T(S_8^-)$ are pairwise relatively prime elements
of $R[T]$.
\end{lem}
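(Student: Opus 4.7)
The plan is to work entirely inside the ring $R[T]$, exploiting the fact that it is a unique factorization domain. Since ${\rm Spin}(8)$ is simply connected, the lattice $\t^*_{\bZ}$ is free of rank $4$ with basis $\rho_1,\rho_2,\rho_3,\rho_4$, so $R[T]$ is identified with the Laurent polynomial ring $\bZ[z_1^{\pm 1}, z_2^{\pm 1}, z_3^{\pm 1}, z_4^{\pm 1}]$, where $z_i:=e^{\rho_i}$; this ring is visibly a UFD, and this will carry the rest of the argument.

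The next step is to establish a general principle in this UFD: for any primitive weight $\lambda\in\t^*_\bZ$ (one which is not a nontrivial integer multiple of another lattice element), the element $1 - e^{-\lambda}\in R[T]$ is prime. This reduces to a short computation: extend $\lambda$ to a $\bZ$-basis of $\t^*_\bZ$; under the corresponding change of variables $e^{-\lambda}$ becomes a single coordinate, and the quotient by $1 - e^{-\lambda}$ becomes a Laurent polynomial ring in fewer variables, which is visibly an integral domain. In the same vein, two such primes $1 - e^{-\lambda}$ and $1 - e^{-\mu}$ (with primitive $\lambda$ and $\mu$) are associate in $R[T]$ if and only if $\lambda = \pm\mu$; the ``only if'' direction comes from the change-of-basis normal form, and the ``if'' direction from the identity $1 - e^{-\lambda} = -e^{-\lambda}(1 - e^{\lambda})$.

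The final step is a direct inspection. Each of the three Euler classes is a product of four factors $1 - e^{-\lambda}$ whose weights $\lambda$ are listed explicitly via the weight computations of Subsection \ref{rootspi}; every such weight is a Weyl-conjugate of a fundamental weight, hence primitive. So the task reduces to verifying that no weight occurring in the factorization of one Euler class equals $\pm$ a weight occurring in the factorization of another. The weights of $V_8$ lie in the integer sublattice $\bZ\{L^1,L^2,L^3,L^4\}$, while those of $S_8^\pm$ lie in the nontrivial coset $\omega^5 + \bZ\{L^1,L^2,L^3,L^4\}$; this distinction is preserved under negation, so the factors of $e^K_T(V_8)$ are already coprime to the factors of $e^K_T(S_8^\pm)$. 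To separate $S_8^+$ from $S_8^-$, I would invoke the description recalled in Subsection \ref{rootspi}: their weights have the form $(\pm L^1\pm L^2\pm L^3\pm L^4)/2$ with an even (respectively odd) number of minus signs, and this parity is preserved by the simultaneous sign flip of negation, so the two weight sets remain disjoint up to sign.

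The main subtlety --- more delicate than difficult --- is to handle the irreducibility of $1 - e^{-\lambda}$ correctly: it is easy to confuse oneself by working in a non-UFD or in a quotient that is not an integral domain, so the key is to make the identification of $R[T]$ with the Laurent polynomial ring in the fundamental weight basis explicit before invoking unique factorization.
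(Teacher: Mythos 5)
Your proof is correct and follows essentially the same route as the paper, which simply records the explicit factorizations of $e^K_T(V_8)$, $e^K_T(S_8^+)$, and $e^K_T(S_8^-)$ into linear factors $1-e^{-\lambda}$ and declares the coprimality immediate. You supply the ring-theoretic details the paper omits (identification of $R[T]$ with a Laurent polynomial UFD, primality of $1-e^{-\lambda}$ for primitive $\lambda$, and the classification of associates), and your lattice-coset and sign-parity argument correctly verifies that no two factors from different Euler classes are associate.
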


Thus, the $T$-action on ${\rm Fl}(\O)$ satisfies the  hypotheses of Theorem \ref{hhh}
for $K^*_T(\cdot)$  (also recall that, by Proposition \ref{fixp}, the $T$-fixed point set is $\Sigma_3x_0$).
We deduce that $K_T^1({\rm Fl}(\O))=\{0\}$, as well as the following result, which concerns 
$K_T^0({\rm Fl}(\O))$. 

 \begin{propo}\label{canhom}  The ring homomorphism
 $\imath_T^*:K_T({\rm Fl}(\O)))\to K_T(\Sigma_3 x_0)=\prod_{\sigma\in \Sigma_3}R[T]$ 
 induced by the inclusion map $\imath : \Sigma_3x_0 \to {\rm Fl}(\O)$ is injective. Its image  consists of
 all $(f_\sigma) \in \prod_{\sigma \in \Sigma_3}   \mathbb{Z}[y_1^{\pm1}, y_2^{\pm1}, y_3^{\pm1}, y_4^{\pm1}, y_5^{\pm1}]/(y_5^2 - y_1y_2y_3y_4)$ with the  property that for any $\sigma \in \Sigma_3$ we have: 
\begin{itemize}
\item[(i)] $ f_{(2,3)\sigma}-f_\sigma
 {\it  \ is  \ divisible \ by \ } 
 (1-y_1^{-1}) (1-y_2^{-1}) (1-y_3^{-1})(1-y_4^{-1}) $
\item[(ii)] $f_{(1,3)\sigma}-f_\sigma {\it  \ is  \ divisible \ by \ } 
(1-y_1^{-1}y_4^{-1}y_5)(1-y_2^{-1}y_4^{-1}y_5)(1-y_3^{-1}y_4^{-1}y_5)(1-y_5^{-1})
$
\item[(iii)] $ f_{(1,2)\sigma}-f_\sigma {\it  \ is  \ divisible \ by \ } (1-y_1^{-1}y_5)(1-y_2^{-1}y_5)(1-y_3^{-1}y_5)(1-y_4^{-1}y_5)$.
\end{itemize}
The divisibility referred to above is in the ring $R[T]=\mathbb{Z}[y_1^{\pm1}, y_2^{\pm1}, y_3^{\pm1}, y_4^{\pm1}, y_5^{\pm1}]/(y_5^2 - y_1y_2y_3y_4)$.
\end{propo}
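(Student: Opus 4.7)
The plan is to apply Theorem \ref{hhh} directly to the $T$-action on ${\rm Fl}(\O)$ in the setting of equivariant $K$-theory, since Subsection \ref{sec:hhh} states that the Harada-Henriques-Holm theorem works verbatim for both $H^*_M(\cdot)$ and $K^*_M(\cdot)$. So the task reduces to checking the three hypotheses (i), (ii), (iii) of that theorem for the cell decomposition \eqref{cwdec} with splitting \eqref{cs}, but now regarded as a $T$-equivariant decomposition rather than a ${\rm Spin}(8)$-equivariant one.

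For (i) and (ii), the key observation is that the arguments of Subsection \ref{five} go through without change for the $T$-action: each cell $C_\sigma \cong \bigoplus \h_\gamma$ is a real vector space of even dimension (each $\h_\gamma$ has real dimension $8$), and by Proposition \ref{eac} the torus $T$ acts $\bC$-linearly on it. By Proposition \ref{fixp}, the $T$-fixed set in ${\rm Fl}(\O)$ coincides with the ${\rm Spin}(8)$-fixed set $\Sigma_3 x_0$, so the unique $T$-fixed point in $C_\sigma$ is $\sigma x_0$, and the boundary statement for each summand $\h_\gamma \subset C_\sigma$ was proved in the lemma of Subsection \ref{five}. Hypothesis (iii) is precisely the content of the lemma immediately preceding Proposition \ref{canhom}: the $K$-theoretic $T$-equivariant Euler classes $e_T^K(V_8)$, $e_T^K(S_8^+)$, $e_T^K(S_8^-)$ are pairwise relatively prime in $R[T]$.

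Once Theorem \ref{hhh} is applied, it delivers the injectivity of $\imath^*_T$ and characterizes the image as those $(f_\sigma)$ satisfying $f_\sigma - f_{s_\gamma \sigma}$ divisible by $e_T^K(\h_\gamma)$ for each pair $(\sigma,\gamma)$ with $\gamma \in \Phi^+$ and $\sigma^{-1}\gamma \in \Phi^-$. To obtain the unrestricted form stated in the proposition, I would repeat the swap argument from the proof of Theorem \ref{princ}: if $\sigma^{-1}\gamma \in \Phi^+$, then $(s_\gamma\sigma)^{-1}\gamma = -\sigma^{-1}\gamma \in \Phi^-$, so replacing the pair $(\sigma,\gamma)$ by $(s_\gamma\sigma,\gamma)$ brings us back to the previous case, yielding divisibility of $f_{s_\gamma \sigma} - f_\sigma$ (equivalently $f_\sigma - f_{s_\gamma\sigma}$) by $e_T^K(\h_\gamma)$ for \emph{all} $\gamma \in \Phi^+$.

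The final step is identification: using \eqref{sgama} we have $s_{\gamma_1} = (2,3)$, $s_{\gamma_2} = (1,3)$, $s_{\gamma_3} = (1,2)$, and we have already identified $\h_{\gamma_1} = V_8$, $\h_{\gamma_2} = S_8^+$, $\h_{\gamma_3} = S_8^-$. Substituting the explicit formulas for $e_T^K(V_8)$, $e_T^K(S_8^+)$, $e_T^K(S_8^-)$ computed just before the lemma yields precisely divisibility conditions (i), (ii), (iii) as stated. I do not expect a serious obstacle here: all the nontrivial work (the rank-one reduction for boundaries, the weight computation, the pairwise coprimality) is already in place, and the proof is essentially an assembly of those pieces together with one invocation of Theorem \ref{hhh}. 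The one point requiring a moment of care is checking that the image condition coming from Theorem \ref{hhh}, which is phrased with respect to the indexing of cells and their boundary fixed points, translates correctly into the symmetric form indexed by transpositions $(i,j) \in \Sigma_3$; this is exactly where the swap argument above is needed.
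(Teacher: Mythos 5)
Your proposal is correct and follows essentially the same route as the paper: the paper likewise verifies hypotheses (i)--(iii) of Theorem \ref{hhh} for the $T$-action (reusing the cell decomposition and boundary lemma from Subsection \ref{five}, plus the pairwise-coprimality of $e_T^K(V_8)$, $e_T^K(S_8^+)$, $e_T^K(S_8^-)$) and then invokes that theorem, with the passage from the restricted divisibility conditions to the symmetric form indexed by transpositions handled exactly by the swap argument you describe from the proof of Theorem \ref{princ}.
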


We are now ready to accomplish the main goal of the subsection:

\noindent {\it Proof of Theorem \ref{kth}.}  Let $\imath^*_M : K_{{\rm Spin}(8)}({\rm Fl}(\O)) \to
\prod_{\sigma \in \Sigma_3}R[{\rm Spin}(8)]$ be the ring homomorphism  induced by the
inclusion $\imath: \Sigma_3x_0 \to {\rm Fl}(\O)$. The map $\imath^*_M$  is obviously 
$W_{{\rm Spin}(8)}$-equivariant, where the action of $W_{{\rm Spin}(8)}$ on 
$\prod_{\sigma \in \Sigma_3}R[{\rm Spin}(8)]$ is the diagonal one. 
 We have the ring isomorphisms 
 \begin{equation}\label{laspin}R[{\rm Spin}(8)] \cong R[T]^{W_{{\rm Spin}(8)}} \quad {\rm and} \quad K_{{\rm Spin}(8)}({\rm Fl}(\O)) \cong K_T({\rm Fl}(\O))^{W_{{\rm Spin}(8)}},\end{equation} where for the last one we 
invoke \cite[Corollary 4.10 (ii)]{Ha-Sj}.  
Since $\imath_T^*$ is $W_{{\rm Spin}(8)}$-equivariant, we may 
identify the two pairs of spaces related by the isomorphisms (\ref{laspin}) and assume that $\imath^*_M$ is just the restriction of $\imath^*_T$
to $K_T({\rm Fl}(\O))^{W_{{\rm Spin}(8)}}$. Consequently, the image of $\imath^*_M$ is the intersection
of the image of $\imath^*_T$ with $\prod_{\sigma \in \Sigma_3}R[{\rm Spin}(8)]$.
It thus consists of all $(f_\sigma)$ in the latter direct product which satisfy the
divisibility properties (i), (ii), and (iii) in Proposition \ref{canhom}.   
Recall now that $R[{\rm Spin}(8)] =\bZ[X_1, X_2, X_3, X_4]$. From the explicit formulae for
$X_1, X_2,$ and $X_3$ given in Subsection \ref{rootspi} we deduce by direct calculation:
\begin{align*}
{}& X_2-X_3 = y_5(1- y^{-1}_1 )  (1-y^{-1}_2)  (1-y^{-1}_3 )  (1-y^{-1}_4)\\
{}& X_1-X_3=y_4(1-y_1^{-1}y_4^{-1}y_5)(1-y_2^{-1}y_4^{-1}y_5)(1-y_3^{-1}y_4^{-1}y_5)(1-y_5^{-1})\\
{}& X_1-X_2 = -y_5^{-1}(1-y_1^{-1}y_5)(1-y_2^{-1}y_5)(1-y_3^{-1}y_5)(1-y_4^{-1}y_5).
\end{align*}
Consequently, for $(f_\sigma) \in \prod_{\sigma \in \Sigma_3} \bZ[X_1, X_2, X_3, X_4]$,
conditions (i), (ii), and (iii) in Proposition \ref{canhom} are equivalent to:
\begin{itemize}
\item[(i')] $ f_{(2,3)\sigma}-f_\sigma
 {\rm \ is  \ divisible \ by \ } 
 X_2-X_3 $
\item[(ii')] $f_{(1,3)\sigma}-f_\sigma {\rm  \ is  \ divisible \ by \ } 
X_1-X_3$
\item[(iii')] $ f_{(1,2)\sigma}-f_\sigma {\rm  \ is  \ divisible \ by \ } X_1-X_2$.
\end{itemize}
This finishes the proof.
\hfill $\square$

\appendix

\section{The complex flag manifold
${\rm Fl}_3(\bC)$}\label{lasts}

This section is a recollection of  well-known facts
concerning the complex flag manifold ${\rm Fl}_3(\bC)$.
The focus is of course on those aspects  whose 
 counterparts in the realm of octonions we  deal with in this paper. 
 Our aim here is to smooth the passage from complex numbers to octonions.

Originally,  ${\rm Fl}_3(\bC)$ is the set of all nested sequences
$$V_1\subset V_2 \subset \bC^3,$$
where $V_1$ and $V_2$ are complex vector subspaces of $\bC^3$ such that
$\dim V_1=1$ and $\dim V_2=2$. 
Alternatively, let us equip $\bC^3$ with the standard Hermitian inner product:
then ${\rm Fl}_3(\bC)$  is the set of pairs $(L_1,L_2)$, where
$L_1$ and $L_2$ are 1-dimensional complex vector subspaces
of $\bC^3$ with $L_1$ orthogonal to $L_2$. 

Let us now consider the space 
$$\h_3(\bC)
=\{a\in {\rm Mat}^{3\times 3} (\bC) 
 \mid a=a^*\}.$$
We have the following presentations.
\begin{proposition}
a) There is a natural identification between the complex projective plane
$\bC {\rm P}^2$ and the set of all matrices
$a\in \h_3(\bC)$ with
$$a^2=a \ {\it and } \ {\rm tr}(a)=1.$$

b) There is a natural identification between the flag manifold ${\rm Fl}_3(\bC)$ and the set of
all pairs $(a_1,a_2)\in \bC {\rm P}^2\times \bC {\rm P}^2$ with the property that
$${\rm Re}({\rm tr}(a_1a_2))=0.$$
\end{proposition}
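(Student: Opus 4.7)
\medskip

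\noindent \textbf{Proof plan.} My plan is to identify both spaces with spaces of orthogonal projections on $\bC^3$, using the standard Hermitian inner product. For part (a), I would define the map $\bC{\rm P}^2 \to \{a\in \h_3(\bC) \mid a^2=a, \ {\rm tr}(a)=1\}$ by sending a line $L\subset \bC^3$ to the orthogonal projection $a_L$ onto $L$: if $v$ is a unit vector spanning $L$, this is the matrix $vv^*$. This matrix is visibly Hermitian, it satisfies $a_L^2=v(v^*v)v^*=vv^*=a_L$ since $v^*v=|v|^2=1$, and ${\rm tr}(a_L)={\rm tr}(v^*v)=1$. The map does not depend on the choice of unit vector (two choices differ by a unit complex scalar $\lambda$, and $(\lambda v)(\lambda v)^*=|\lambda|^2 vv^* = vv^*$). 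Conversely, any Hermitian idempotent $a$ is diagonalizable with eigenvalues in $\{0,1\}$, and the condition ${\rm tr}(a)=1$ forces exactly one eigenvalue to be $1$; hence $a$ is the orthogonal projection onto a well-defined complex line, which provides the inverse.

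For part (b), given the identification in (a), the data of a pair $(a_1,a_2)$ as in the statement corresponds to a pair of lines $(L_1,L_2)$, and I would  show that the incidence condition ${\rm Re}({\rm tr}(a_1a_2))=0$ is equivalent to $L_1 \perp L_2$. Writing $a_1=v_1v_1^*$ and $a_2=v_2v_2^*$ for unit vectors $v_i$ spanning $L_i$, a direct trace computation gives
\begin{equation*}
{\rm tr}(a_1a_2)={\rm tr}(v_1v_1^*v_2v_2^*)=v_2^*v_1v_1^*v_2=|\langle v_1,v_2\rangle|^2,
\end{equation*}
which is already a nonnegative real number. Hence ${\rm Re}({\rm tr}(a_1a_2))={\rm tr}(a_1a_2)$ vanishes if and only if $\langle v_1,v_2\rangle=0$, that is, if and only if $L_1\perp L_2$. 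This matches the reformulation of ${\rm Fl}_3(\bC)$ as pairs of orthogonal complex lines given right before the proposition.

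I do not anticipate any real obstacle: the argument is a direct verification, and the only point worth emphasizing is why the ``Re'' in the incidence condition is present at all. In the complex case it is redundant, since ${\rm tr}(a_1a_2)$ is automatically real for Hermitian $a_1,a_2$ (from $(a_1a_2)^*=a_2a_1$ combined with ${\rm tr}(a_2a_1)={\rm tr}(a_1a_2)$). The ``Re'' is included only so that the formula has the same shape as Definition \ref{defin}(b), where the noncommutativity and nonassociativity of $\O$ make it genuinely necessary. Once this is observed, the proof reduces to the two bijection arguments above.
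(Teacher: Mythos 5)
Your proof is correct and follows essentially the same route as the paper: both identify a line $L$ with the Hermitian rank-one orthogonal projection onto it (the paper phrases this as ``the element of $\h_3(\bC)$ with eigenvalues $(1,0,0)$ and $1$-eigenspace $L$,'' you write it explicitly as $vv^*$), and both verify the incidence condition by computing ${\rm tr}(a_1a_2)$ to be a nonnegative real number (the paper gets $|a_1(v_1)|^2$, you get $|\langle v_1,v_2\rangle|^2$, which is the same quantity) that vanishes precisely when $L_1\perp L_2$. Your closing remark on why the ``${\rm Re}$'' is redundant over $\bC$ but retained for parallelism with the octonionic Definition \ref{defin}(b) is a correct and worthwhile observation not spelled out in the paper.
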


The identifications are as follows.

\noindent {\it For $\bC {\rm P}^2$.}
 A 1-dimensional complex vector subspace $V$ of $\bC^3$ 
 is identified with the element of $\h_3(\bC)$ which has eigenvalues 
 $1,0,0$ and 1-eigenspace equal to $V$ (the $0$-eigenspace is implicitly
 $V^{\perp}$). 
 Moreover, an element $a$ of $\h_3(\bC)$ has eigenvalues
 $(1,0,0)$ if and only if
 $a^2=a$ and ${\rm tr}(a)=1$. 
 
 \noindent {\it For ${\rm Fl}_3(\bC)$.} Take $L_1, L_2$ two 1-dimensional
 complex vector subspaces of $\bC^3$ and
 $a_1,a_2$ the Hermitian matrices with eigenvalues
 $(1,0,0)$ and $1$-eigenspaces $L_1$, respectively $L_2$.
 The main point is that $L_1$ is perpendicular to  $L_2$ if and only if
 ${\rm Re (tr}(a_1a_2))=0.$
   Indeed, let us  choose an orthonormal basis
   $v_1,v_2,v_3$ of $\bC^3$, where $L_2=\bC v_1$. Then we have:
\begin{align*}{\rm tr}(a_1a_2)
&= \langle a_1a_2(v_1), v_1\rangle
+ \langle a_1a_2(v_2), v_2\rangle 
+ \langle a_1a_2(v_3), v_3\rangle\\
{}&=  \langle a_1(v_1), v_1\rangle= \langle a_1^2(v_1), v_1\rangle
= \langle a_1(v_1), a_1^*(v_1)\rangle
= \langle a_1(v_1), a_1(v_1)\rangle.
\end{align*}
Thus, ${\rm Re}({\rm tr}(a_1a_2))=0$ if and
and only if $a_1(v_1)=0$. 
On the other hand,  $L_1$ is perpendicular to $L_2$ if and only if
$L_2$ is contained in the $0$-eigenspace of $a_1$, that is,
$a_1(v_1)=0.$ 

There are three natural projections ${\rm Fl}_3(\bC) \to \bC {\rm P}^2$: the first maps
an arbitrary pair $(L_1, L_2)$ to $L_1$, the second to $L_2$, and the third to the orthogonal
complement of $L_1\oplus L_2$ in $\bC^3$. These are all three 
$\bC {\rm P}^1$-bundles. By taking the tangent space  to the fiber at any point, one obtains
three subbundles of the tangent bundle of ${\rm Fl}_3(\bC)$, which we denote by
$\E_1^c, \E_2^c,$ and $\E_3^c$. These are complex line bundles over
${\rm Fl}_3(\bC)$. On the other hand, ${\rm Fl}_3(\bC)$ has also three tautological
bundles, call them $\L_1, \L_2, \L_3$. The following identifications are natural:
$$\E_1^c = \L_2\otimes \L_3^*, \quad  \E_2^c = \L_3\otimes \L_1^*,
\quad \E_3^c = \L_2\otimes \L_1^*.$$They induce the following 
relationship concerning the first Chern classes:
$$c_1(\E_1^c) = c_1(\L_2)-c_1(\L_3), \quad  c_1(\E_2^c) = c_1(\L_3)-c_1(\L_1),
\quad  c_1(\E_3^c) = c_1(\L_2)-c_1(\L_1).$$
Consequently, we have
$$c_1(\E^c_3)=c_1(\E^c_1)+ c_1(\E^c_2).$$
But $\L_1\oplus \L_2\oplus \L_3$ is a trivial  vector bundle over ${\rm Fl}_3(\bC)$, hence
$$c_1(\L_1) +c_1(\L_2) + c_1(\L_3)=0.$$
We are led to:
\begin{align}
{}&c_1(\L_1) = -\frac{1}{3}(c_1(\E^c_1) + 2c_1(\E^c_2)) \nonumber\\
{}& c_1(\L_2) = \frac{1}{3}(2c_1(\E_1^c)+c_1(\E_2^c)) \label{chern} \\ 
{}& c_1(\L_3)= \frac{1}{3}(-c_1(\E_1^c)+c_1(\E_2^c)).\nonumber \end{align}
A theorem of Borel, see \cite{Bo}, says that the ring $H^*({\rm Fl}_3(\bC);\bZ)$ is generated by
$x_1:=c_1(\L_1), x_2:=c_1(\L_2),$ and $x_3:=c_1(\L_3)$, subject to the relations given by the vanishing
of all symmetric polynomials in $x_1, x_2, $ and $x_3$. We deduce:
\begin{proposition} The ring  $H^*({\rm Fl}_3(\bC);\bZ)$ is generated by
$\frac{1}{3}(2c_1(\E_1^c)+c_1(\E_2^c))$ and $\frac{1}{3}(c_1(\E^c_1)+2c_1(\E^c_2))$,
subject to the relations
$$S_i\left(\frac{1}{3}(2c_1(\E^c_1)+c_1(\E^c_2)),\frac{1}{3}(-c_1(\E^c_1)+c_1(\E^c_2)), 
-\frac{1}{3}(c_1(\E^c_1)+2c_1(\E^c_2))\right)=0$$
$i=2,3$. 
\end{proposition}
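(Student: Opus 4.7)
The plan is to reduce the statement to Borel's theorem via the explicit Chern class identifications~(\ref{chern}) that were just derived. Set
$$u := \tfrac{1}{3}(2c_1(\E_1^c)+c_1(\E_2^c)), \quad v := \tfrac{1}{3}(c_1(\E^c_1)+2c_1(\E^c_2)), \quad w := \tfrac{1}{3}(-c_1(\E^c_1)+c_1(\E^c_2)),$$
so that the three arguments of $S_i$ appearing in the statement are $u,\,w,\,-v$. First I would note the polynomial identity $u + w + (-v) \equiv 0$, which is immediate by direct computation in $\bZ[c_1(\E_1^c),c_1(\E_2^c)]$. Next, the formulas (\ref{chern}) read
$$c_1(\L_1) = -v, \qquad c_1(\L_2) = u, \qquad c_1(\L_3) = w,$$
so via Borel's theorem I can rewrite $H^*({\rm Fl}_3(\bC);\bZ)$ as the quotient of $\bZ[u,v,w]$ by the ideal generated by $S_1(u,w,-v)$, $S_2(u,w,-v)$, and $S_3(u,w,-v)$ (using the symmetry of the $S_i$).

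Now I would use the relation $S_1(u,w,-v)=u+w-v=0$ to eliminate the generator $w$, writing $w = v - u$. This shows that $u$ and $v$ alone generate the ring, proving the first half of the proposition. The remaining relations $S_2(u,w,-v)=0$ and $S_3(u,w,-v)=0$, after this substitution, become the two stated relations among $u$ and $v$. The only thing to check is that no further relation appears when we present $H^*({\rm Fl}_3(\bC);\bZ)$ in terms of $u$ and $v$ only; but this is automatic because the substitution $w\mapsto v-u$ defines an isomorphism from $\bZ[u,v]$ onto $\bZ[u,v,w]/(S_1(u,w,-v))$, and so the image of $(S_2,S_3)$ in $\bZ[u,v]$ is exactly the ideal of relations.

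There is no genuine obstacle here: the heavy lifting has already been done, namely Borel's presentation and the computation of $c_1(\L_i)$ in terms of $c_1(\E_j^c)$ via the identification of the $\E_j^c$ with tensor products of tautological line bundles. The only care required is to observe that the automatic identity $u+w-v=0$ accounts for the missing $S_1$ relation in the stated presentation, so that the list $(S_2,S_3)$ is complete and not redundant.
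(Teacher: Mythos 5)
Your proposal is correct and follows essentially the same route as the paper: Borel's presentation in terms of $c_1(\L_1),c_1(\L_2),c_1(\L_3)$ combined with the formulas (\ref{chern}), which the paper leaves implicit behind the phrase ``We deduce.'' Your explicit bookkeeping --- identifying the three arguments of $S_i$ with $c_1(\L_2)$, $c_1(\L_3)$, $-c_1(\L_1)$ up to the symmetry of $S_i$, and using the relation $S_1=0$ to eliminate the third generator via the isomorphism $\bZ[u,v,w]/(u+w-v)\cong\bZ[u,v]$ --- is exactly the omitted step, carried out correctly.
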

Let us now denote by
$T$ the 3-torus which consists of all diagonal matrices of the form ${\rm Diag}(z_1, z_2, z_3)$, 
$z_i\in \bC$, $|z_i|=1$. The natural splitting $T=S^1\times S^1\times S^1$
induces $BT=BS^1\times BS^1\times BS^1$, hence 
$H^*(BT)=\bR[u_1]\otimes \bR[u_2]\otimes \bR[u_3]=\bR[u_1, u_2, u_3]$.
There is a canonical action of $T$ on ${\rm Fl}_3(\bC)$. The corresponding $T$-equivariant
cohomology ring $H^*_T({\rm Fl}_3(\bC))$ can also be described by a Borel type
formula. Concretely, as an $H^*(BT)$-algebra, $H^*_T({\rm Fl}_3(\bC))$ is generated by
the $T$-equivariant first Chern classes of $\L_1$, $\L_2$, and $\L_3$, which are:
$$\tilde{x}_1:=c_1^T(\L_1), \quad  \tilde{x}_2:=c_1^T(\L_2), \quad \tilde{x}_3:=c_1^T(\L_3).$$ The ideal of relations is generated by:
$$S_j(\tilde{x}_1, \tilde{x}_2, \tilde{x}_3)=S_j(u_1, u_2, u_3),$$
$j=1,2,3$ (the details are spelled out for instance in \cite[Proof of Theorem 1.1]{Ma2}). 
This time, instead of (\ref{chern}) we have:
\begin{align}
{}&c^T_1(\L_1) = -\frac{1}{3}(c^T_1(\E^c_1) + 2c^T_1(\E^c_2))+\frac{1}{3}(u_1+u_2+u_3) \nonumber\\
{}& c^T_1(\L_2) = \frac{1}{3}(2c^T_1(\E_1^c)+c^T_1(\E_2^c))+\frac{1}{3}(u_1+u_2+u_3) \nonumber \\ 
{}& c^T_1(\L_3)= \frac{1}{3}(-c^T_1(\E_1^c)+c^T_1(\E_2^c))+\frac{1}{3}(u_1+u_2+u_3).\nonumber
 \end{align}
Set
$$b_1:=u_2-u_3 \ {\rm and} \ b_2:= u_3-u_1.$$
We have proved:
\begin{proposition} As an $H^*(BT)$-algebra,  $H^*_T({\rm Fl}_3(\bC))$ is generated
by $c_1^T(\E^c_1)$ and $c_1^T(\E^c_2)$,
subject to the relations:
\begin{align*}
{}&S_i(2c_1^T(\E^c_1)+c_1^T(\E^c_2),-c_1^T(\E^c_1)+c_1^T(\E^c_2), -(c_1^T(\E^c_1)+2c_1^T(\E^c_2))) \\
 {}&  \ \ \ \ \  \ \ \ \ \ \ \ \ \ \  \ \ \ \ \ \ \ \ \ \  \ \ \ \ \ \ \ \ \ \  \ \ \ \ \ \ \ \ \ \  \ \ \ \ \ \ \ \ \ \ =S_i(2b_1+b_2,-b_1+b_2,-(b_1+2b_2)),
\end{align*}
$i=2,3$.
\end{proposition}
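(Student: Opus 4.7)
The strategy is to bootstrap from the Borel type description stated just above the proposition: $H^*_T({\rm Fl}_3(\bC))$ is generated as an $H^*(BT)$-algebra by $\tilde{x}_1,\tilde{x}_2,\tilde{x}_3$ subject to the relations $S_j(\tilde{x}_1,\tilde{x}_2,\tilde{x}_3)=S_j(u_1,u_2,u_3)$ for $j=1,2,3$. The whole proposition will then follow from an affine change of variables driven by the three displayed formulas expressing $\tilde{x}_i=c_1^T(\L_i)$ in terms of $c_1^T(\E^c_1),c_1^T(\E^c_2)$, and $t:=\frac{1}{3}(u_1+u_2+u_3)$.

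First I would center the variables. Set $y_i:=\tilde{x}_i-t$ and $v_i:=u_i-t$, so that $y_1+y_2+y_3=0=v_1+v_2+v_3$; the $j=1$ relation becomes automatic. From the three displayed formulas for $c_1^T(\L_i)$ one reads off
$$3y_2=2c_1^T(\E^c_1)+c_1^T(\E^c_2),\quad 3y_3=-c_1^T(\E^c_1)+c_1^T(\E^c_2),\quad 3y_1=-(c_1^T(\E^c_1)+2c_1^T(\E^c_2)),$$
and a direct computation gives $3v_2=2b_1+b_2$, $3v_3=-b_1+b_2$, $3v_1=-(b_1+2b_2)$. In particular $c_1^T(\E^c_1)=y_2-y_3=\tilde{x}_2-\tilde{x}_3$ and $c_1^T(\E^c_2)=y_3-y_1=\tilde{x}_3-\tilde{x}_1$, so once the $S_1$ relation is used to solve for $\tilde{x}_1+\tilde{x}_2+\tilde{x}_3$, the pair $c_1^T(\E^c_1),c_1^T(\E^c_2)$ together with $H^*(BT)$ recovers all three $\tilde{x}_i$. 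Hence $c_1^T(\E^c_1),c_1^T(\E^c_2)$ generate $H^*_T({\rm Fl}_3(\bC))$ as an $H^*(BT)$-algebra.

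Next I would transform the remaining relations. Using $\sum y_i=0$, a short expansion gives
$$S_2(\tilde{x}_1,\tilde{x}_2,\tilde{x}_3)=S_2(y_1,y_2,y_3)+3t^2,\qquad S_3(\tilde{x}_1,\tilde{x}_2,\tilde{x}_3)=S_3(y_1,y_2,y_3)+t\,S_2(y_1,y_2,y_3)+t^3,$$
and the analogous identities hold for the $v_i$. Subtracting, the $j=2$ relation collapses to $S_2(y_1,y_2,y_3)=S_2(v_1,v_2,v_3)$ and, after substituting this back, the $j=3$ relation collapses to $S_3(y_1,y_2,y_3)=S_3(v_1,v_2,v_3)$. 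Since $S_i(3y_1,3y_2,3y_3)=3^i S_i(y_1,y_2,y_3)$, multiplying through by $3^i$ and substituting the expressions from the previous paragraph yields precisely the two displayed relations of the proposition.

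The change of variables $(\tilde{x}_1,\tilde{x}_2,\tilde{x}_3)\leftrightarrow(c_1^T(\E^c_1),c_1^T(\E^c_2),t)$ is an invertible affine transformation over $H^*(BT)$ once the $S_1$ relation is quotiented out, so no further relations can arise: the ideal of relations for the new generators is exactly what is obtained by rewriting the old ideal. There is no conceptual obstacle here; the only care needed is the bookkeeping of the centering by $t$, so that the $S_1$ relation is consumed exactly in expressing the $y_i$ as differences of $\tilde{x}_i$'s, leaving the two stated identities as the full list of relations.
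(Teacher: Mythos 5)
Your proposal is correct and follows essentially the same route as the paper: the paper derives this proposition directly from the Borel presentation in the generators $\tilde{x}_i=c_1^T(\L_i)$ together with the displayed change-of-variables formulas (it simply writes ``We have proved:'' after recording them), and you are carrying out exactly that affine change of variables, with the centering by $t=\frac13(u_1+u_2+u_3)$ absorbing the $S_1$ relation and the determinant-$3$ linear substitution (invertible over $\bR$) transporting the remaining two relations. The bookkeeping identities you state ($3y_2=2c_1^T(\E_1^c)+c_1^T(\E_2^c)$, etc., and the expansions of $S_2,S_3$ under the shift) all check out.
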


The $T$-fixed points in ${\rm Fl}_3(\bC)$ are all pairs of type $(\bC e_i, \bC e_j)$,
$1\le i, j\le 3$, $i\neq j$. Here $e_i$ is the $i$-th coordinate vector in $\bC^3$.
Thus, we have a natural identification ${\rm Fl}_3(\bC)^T\cong \Sigma_3$.
One can show that the restriction map $H^*_T({\rm Fl}_3(\bC))\to H^*_T(\Sigma_3)$ is injective and
its image consists of all $(f_\sigma)\in \prod_{\sigma\in \Sigma_3}\bR[u_1, u_2, u_3]$
such that
$f_\sigma -f_{(i,j)\sigma}$ is divisible by $u_i-u_j$, for all $\sigma \in \Sigma_3$ and $1\le i<j\le 3$:
this is the standard Goresky-Kottwitz-MacPherson type description of $H^*_T({\rm Fl}_3(\bC))$,
which can be immediately deduced from the original work \cite{Go-Ko-Ma}. 

A similar description holds for the $T$-equivariant $K$-theory ring of ${\rm Fl}(\O)$.
Namely, let us first identify $R[T]=\bZ[t_1^{\pm 1}, t_2^{\pm 1}, t_3^{\pm 1}]$. 
Then the restriction map 
$K_T({\rm Fl}_3(\bC))\to K_T(\Sigma_3)$ is injective and its image consists of
all $(f_\sigma)\in \prod_{\sigma\in \Sigma_3}\bZ[t_1^{\pm 1}, t_2^{\pm 1}, t_3^{\pm 1}]$ 
with the property that $f_\sigma -f_{(i,j)\sigma}$ is divisible by $1-t_it_j^{-1}$,
 for all $\sigma \in \Sigma_3$ and $1\le i<j\le 3$. This is a direct application of \cite[Theorem 1.7]{Mc}
 (cf.~also the appendix of \cite{Kn-Ro}).

\section{Real flag manifolds and their cell decomposition}\label{lasts1}

In this section we will present some general notions and results
concerning real flag manifolds. The main reference is  \cite{Du-Ko-Va} (the
background material  can be found for instance in \cite[Ch.~IX]{He}).

Let $G$ be a real connected semisimple Lie group and denote by $\g$ its Lie algebra.
Let $$\g=\k\oplus \s$$ be a Cartan decomposition: this means that the Killing form of
$\g$ is strictly negative definite on $\k$ and strictly positive definite on $\s$.
The corresponding Cartan involution is $\theta: \g\to\g$,
$$\theta(x+y)=x-y,$$
for all $x\in \k$ and $y\in \s$.
We pick a maximal abelian subspace $\a$ of $\s$ and consider the following root space decomposition:
\begin{equation}\label{rootdec}
\g=\m+\a+\sum_{\gamma\in \Phi}\g_{\gamma}.
\end{equation}
Here $\m$ is the centralizer of $\a$ in $\k$ and $\Phi$ the set of roots, which are 
functions $\gamma :\a\to \bR$ such that the root space
$$\g_\gamma:=\{x\in \g \mid [h,x]=\gamma(h)x \ {\rm for  \ all \ } h\in \a\}$$
is non-zero.
The set $\Phi$ is a root system in   the dual space $\a^*$. Let us pick a 
system of simple roots and denote by $\Phi^+$ the corresponding set of
positive roots. We set
$$\n:=\sum_{\gamma\in \Phi^+}\g_\gamma$$ and
obtain the Iwasawa decomposition
$$\g=\k\oplus \a \oplus \n.$$
If $K$, $A$, $N$ are the connected Lie subgroups of $G$ of Lie algebras
$\k$, $\a$, and $\n$ respectively, then we have the following Iwasawa decomposition of $G$:
$$G=KAN.$$
 Let us also denote by $M$ the centralizer of $\a$ in $K$ and by $W$ the Weyl group,
which is
$$W=\{k\in K \mid {\rm Ad}_Gk(\a)\subset \a\}/M.$$

It turns out that, via the adjoint representation of $G$, the group $K$ leaves $\s$ invariant. The orbits of the resulting representation are called {\it real flag manifolds}.
We  need the following result:

\begin{proposition}\label{x00} Take $x_0\in \a$ such that $\gamma(x_0)\neq 0$ for all
$\gamma\in \Phi$. Then the stabilizer of $x_0$ in $K$ is equal to $M$.
\end{proposition}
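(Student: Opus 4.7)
The plan is to prove the two inclusions separately. The inclusion $M \subset K_{x_0}$ is immediate from the definition of $M$: since $M$ centralizes all of $\a$ and $x_0 \in \a$, every element of $M$ fixes $x_0$ under the adjoint action.

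For the reverse inclusion, the first step is to compute the centralizer of $x_0$ inside $\g$ using the root space decomposition (\ref{rootdec}). Writing an arbitrary $y\in\g$ as $y = m + h + \sum_{\gamma\in\Phi} y_\gamma$ with $m\in\m$, $h\in\a$, and $y_\gamma\in\g_\gamma$, one computes
\begin{equation*}
[x_0,y] = \sum_{\gamma\in\Phi}\gamma(x_0)\, y_\gamma,
\end{equation*}
because $[x_0,m]=0$ (by definition of $\m$) and $[x_0,h]=0$ (since $\a$ is abelian). The regularity hypothesis $\gamma(x_0)\ne 0$ then forces $y_\gamma=0$ for every $\gamma\in\Phi$. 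Hence the centralizer of $x_0$ in $\g$ is exactly $\m+\a$, and intersecting with $\s$ (and using $\m\subset\k$, so $\m\cap\s=0$) gives that the centralizer of $x_0$ in $\s$ equals $\a$.

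The second step translates this to the group. Take $k\in K_{x_0}$. Since $k\in K$, the operator $\mathrm{Ad}(k)$ preserves the decomposition $\g=\k\oplus\s$; in particular it preserves $\s$. Because $\mathrm{Ad}(k)(x_0)=x_0$, it also preserves the centralizer of $x_0$ in $\s$. By the computation above, this centralizer is $\a$, so $\mathrm{Ad}(k)(\a)\subset\a$. Thus $k\in N_K(\a)$, and we can consider the coset $w:=kM\in W=N_K(\a)/M$.

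Finally, the Weyl group $W$ acts faithfully on $\a$ as a finite reflection group generated by the reflections in the hyperplanes $\ker\gamma$, $\gamma\in\Phi$, and the Weyl chambers are fundamental domains for this action. The element $w$ fixes $x_0$; but by regularity, $x_0$ lies in the open interior of some Weyl chamber, and the only element of $W$ fixing an interior point of a chamber is the identity. Therefore $w=1$, i.e.\ $k\in M$, completing the proof. There is no serious obstacle here: the whole argument is structural, relying only on the root space decomposition, the invariance of $\s$ under $\mathrm{Ad}(K)$, and the standard fact that Weyl chambers are fundamental domains.
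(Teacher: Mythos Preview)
Your proof is correct and takes a genuinely different route from the paper's. The paper quotes \cite[Proposition 1.2]{Du-Ko-Va} to get the decomposition $K_{x_0}=M\,K_{x_0}^0$, and then observes from the root space decomposition that the Lie algebra of $K_{x_0}$ (the centralizer of $x_0$ in $\k$) is $\m$, so $K_{x_0}^0\subset M$ and the result follows. Your argument instead avoids any external citation: you show directly that any $k\in K_{x_0}$ normalizes $\a$ (because $\mathrm{Ad}(k)$ preserves $\s$ and the centralizer of $x_0$ in $\s$, which you identify as $\a$), and then use the standard fact that Weyl chambers are fundamental domains for $W$ to conclude $kM$ is trivial in $W$. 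The paper's approach is shorter once one grants the cited proposition, while yours is more self-contained and makes the role of regularity (placing $x_0$ in an open chamber) explicit.
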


\begin{proof} By \cite[Proposition 1.2]{Du-Ko-Va}  the stabilizer
$K_{x_0}$ of $x_0$ satisfies
$$K_{x_0}=MK_{x_0}^0,$$
where $K_{x_0}^0$ denotes the identity component of $K_{x_0}$.
The Lie algebra of $K_{x_0}$ is the commutator of $x_0$ in $\k$.  
From the root decomposition (\ref{rootdec}), this is the same as $\m$.
Thus, we have $K_{x_0}^0\subset M$ and consequently 
$K_{x_0}=M$.
\end{proof}
 
Consequently, we can identify
$$X:={\rm Ad}_G(K)x_0=K/M.$$ 
From this we can see that there is a canonical embedding of the Weyl group $W$ in
$X$.

The natural action of $K$ on $X$ extends to an action of $G$. This arises from the identification
$$X=K/M=KAN/MAN = G/MAN,$$
where we take into account that $MAN$ is a subgroup of $G$.
The cell decomposition of $X$ we will describe in the following theorem   uses
 the embedding $W\subset X$ and also  the action of $G$ on $X$. The  proof can be found in \cite[Section 3]{Du-Ko-Va}.

\begin{theorem}{\rm (\cite{Du-Ko-Va})}\label{dkv}
(a) We have
\begin{equation}\label{cwde}X=\bigsqcup_{w\in W} Nw.\end{equation}

(b) Fix $w\in W$. The map $\sum \g_\gamma \to Nw$,
$x\mapsto \exp (x)w $ is a diffeomorphism. The sum in the domain
runs over all $\gamma\in \Phi^+$ such that $w^{-1}\gamma \in \Phi^-$.

(c) The decomposition (\ref{cwde}) makes $X$ into a  CW complex.
\end{theorem}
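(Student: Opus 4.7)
The plan is to identify $X$ with the homogeneous space $G/P$ for $P := MAN$, and then reduce the statement to the classical Bruhat decomposition of $G$. This identification comes from the Iwasawa decomposition $G = KAN$: since $MAN$ is a subgroup of $G$, one has $G/MAN = KAN/MAN = K/M = X$, using that $K \cap MAN = M$ (which follows from uniqueness in the Iwasawa decomposition). Under this identification the basepoint corresponds to $x_0$ and the $G$-action on $X$ extends the $K$-action.

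For part (a) I would invoke the Bruhat decomposition $G = \bigsqcup_{w \in W} NwP$, with $w$ ranging over a set of representatives in $N_K(\a)$; passing to $G/P$ yields $X = \bigsqcup_{w \in W} Nw \cdot o$, with disjointness coming from the $BN$-pair axioms. For part (b), set $N^- := \theta(N)$ and define
$$N^w := N \cap wN^-w^{-1}, \qquad N_w := N \cap wNw^{-1}.$$
A root-system computation based on the partition $\Phi^+ = (\Phi^+ \cap w\Phi^-)\sqcup (\Phi^+ \cap w\Phi^+)$ identifies the Lie algebra of $N^w$ as $\sum_{\gamma\in\Phi^+,\, w^{-1}\gamma\in\Phi^-}\g_\gamma$ and of $N_w$ as the complementary sum. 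Since those subalgebras sum to $\n$ and meet in $0$, the group $N$ factorizes uniquely as $N^w N_w$, both factors being simply connected nilpotent. Then $N_w$ stabilizes $w\cdot o$ because $w^{-1}N_w w \subset N \subset P$, while $N^w$ acts freely on the orbit; hence $x\mapsto \exp(x)\cdot w$ is the composite of the exponential diffeomorphism $\n^w \to N^w$ with a free orbit map, and is therefore a diffeomorphism.

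For part (c) the standard route is via a Bott--Samelson resolution. Given a reduced expression $w = s_{i_1}\cdots s_{i_k}$, I would form the iterated fibre product $P_{i_1}\times_P P_{i_2}\times_P \cdots \times_P P_{i_k}/P$, where $P_{i_j}$ is the minimal parabolic generated by $P$ and $s_{i_j}$. This is a compact smooth manifold built as an iterated $\mathbb{R}\mathrm{P}^1$-bundle, its image in $X$ is the closure $\overline{Nw}$, and composing with a closed-ball compactification of $\n^w$ supplies a continuous characteristic map whose restriction to the open cell recovers part (b). One then verifies the closure relation $\partial\overline{Nw} \subset \bigsqcup_{v<w} Nv$ by reducing to rank one along each Bott--Samelson factor.

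The main obstacle will be part (c): parts (a) and (b) are essentially algebraic consequences of Iwasawa and Bruhat, while (c) requires genuine topological control of the closures and of continuous extensions of the exponential-orbit parametrizations to the bounding spheres. The rank-one reduction already used in Case 2 of Subsection \ref{five} is the right local model: inside the $\mathrm{SL}_2$-type subsystem attached to a simple root $\gamma$, the closure $\overline{N^\gamma\cdot(-h_\gamma)} = X^\gamma$ is a projective line, providing the compactification needed to attach each cell.
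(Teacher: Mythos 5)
The paper does not actually prove this theorem: it is imported verbatim from Duistermaat--Kolk--Varadarajan, with the text pointing to \cite[Section 3]{Du-Ko-Va} for the proof (and to their Section 4, Equation (4.10), for the closure relations used later). So the only meaningful comparison is with the cited source. Your parts (a) and (b) follow exactly the standard route taken there: identify $X=K/M=G/MAN$ via the Iwasawa decomposition, invoke the Bruhat decomposition $G=\bigsqcup_w NwP$ for the minimal parabolic $P=MAN$, and parametrize each cell by $N^w=N\cap wN^-w^{-1}$ using the factorization $N=N^wN_w$ together with the fact that $w^{-1}N_ww\subset P$ and that $wN^-w^{-1}\cap wPw^{-1}=\{e\}$ forces injectivity. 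That part of your argument is sound.

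Part (c) contains a concrete error and a gap. The fibre $P_{i_j}/P$ of the Bott--Samelson tower is \emph{not} a real projective line in general: it is the full flag manifold $K^{\gamma}/M^{\gamma}$ of the rank-one subgroup attached to the simple root $\gamma=\gamma_{i_j}$, which is a sphere of dimension equal to the root multiplicity $\dim\g_{\gamma}$ (plus $\dim\g_{2\gamma}$ when $2\gamma$ is a root). The real-projective-line picture is correct only for split groups, where all multiplicities are $1$; in the case this paper cares about the multiplicities are $8$ and the fibres are $S^8$ -- indeed your own rank-one model in Case 2 of Subsection \ref{five} exhibits $X^\gamma$ as an $8$-sphere. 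Taken literally, an iterated $\mathbb{R}{\rm P}^1$-bundle would have dimension $\ell(w)$ rather than the correct $\sum\dim\g_\gamma$, so the cells would come out with the wrong dimension. The fix is routine (replace each fibre by the appropriate sphere and each closed interval by a closed ball of the multiplicity dimension), but the construction of the characteristic maps $D^{n_w}\to X$ and the verification that their boundaries land in lower-dimensional cells is exactly the content you are leaving to ``one then verifies''; this is the part of the theorem that is not a formal consequence of Bruhat, and it is what \cite{Du-Ko-Va} actually supply (they do so with explicit parametrizations and limit arguments rather than via Bott--Samelson resolutions, though the Bott--Samelson route can be made to work).
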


The cells $Nw$, $w\in W$, are usually referred to as {\it Schubert cells}. 

Let us now consider the following root space decomposition of $\s$:
\begin{equation}\label{rooa}\s=\a+\sum_{\gamma\in \Phi^+}\s_\gamma,\end{equation}
where $$\s_{\gamma}=(\g_\gamma+ \g_{-\gamma})\cap \s=
\{x\in \s \mid [h,[h,x]]=\gamma(h)^2x \ {\rm for  \ all \ } h\in\a\}.$$
We can easily see that both $\g_\gamma$ and $\s_\gamma$ are 
$M$-invariant, where $M$ acts via the Adjoint representation. The following result seems to be known.
Since we didn't find it clearly stated and proved in the literature, we 
included a proof of it.

\begin{proposition}\label{isomor} If $\gamma \in \Phi^+$, then the map $\Theta: \g_\gamma\to \s_\gamma$, given by $\Theta(x)=x-\theta x,$
for all $x\in \g_\gamma$, is an $M$-equivariant linear isomorphism.
\end{proposition}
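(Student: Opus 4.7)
The plan rests on three simple observations. First, the Cartan involution $\theta$ swaps the root spaces $\g_\gamma$ and $\g_{-\gamma}$: since $\theta$ is a Lie algebra automorphism and restricts to $-\mathrm{id}$ on $\a$, for $x\in\g_\gamma$ and $h\in\a$ we have $[h,\theta x]=\theta[-h,x]=-\gamma(h)\theta x$, so $\theta x\in\g_{-\gamma}$. Second, the image of $\Theta$ lands in $\s_\gamma$: for $x\in\g_\gamma$, we have $\Theta(x)=x-\theta x\in \g_\gamma+\g_{-\gamma}$, and the identity $\theta(\Theta(x))=\theta x-x=-\Theta(x)$ shows $\Theta(x)\in\s$, so $\Theta(x)\in(\g_\gamma+\g_{-\gamma})\cap\s=\s_\gamma$. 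Linearity is immediate from the linearity of $\theta$.

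For injectivity, if $\Theta(x)=0$ then $x=\theta x$, but $x\in\g_\gamma$ while $\theta x\in\g_{-\gamma}$, and these spaces intersect trivially since $\gamma\neq 0$ (as $\gamma\in\Phi^+$); hence $x=0$. For surjectivity, take $y\in\s_\gamma$ and write uniquely $y=y_++y_-$ with $y_\pm\in\g_{\pm\gamma}$. Applying $\theta$ gives $\theta y=\theta y_++\theta y_-\in\g_{-\gamma}+\g_\gamma$, and comparing with $\theta y=-y=-y_+-y_-$ in the direct sum $\g_\gamma\oplus\g_{-\gamma}$ forces $\theta y_+=-y_-$. Therefore $y=y_+-\theta y_+=\Theta(y_+)$, so $y\in\mathrm{Im}(\Theta)$.

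Finally, for $M$-equivariance, I use two facts: $M$ is the centralizer of $\a$ in $K$, so $\mathrm{Ad}(m)$ fixes $\a$ pointwise and consequently preserves each root space $\g_\gamma$ (from $[h,\mathrm{Ad}(m)x]=\mathrm{Ad}(m)[h,x]=\gamma(h)\mathrm{Ad}(m)x$); and $\mathrm{Ad}(m)$ commutes with $\theta$, because $M\subset K$ and $K$ is precisely the fixed-point subgroup of the global Cartan involution on $G$ (equivalently, $\theta$ is the differential at the identity of that involution, which is an automorphism of $G$ centralizing $K$). These two commutation properties combine to give $\Theta(\mathrm{Ad}(m)x)=\mathrm{Ad}(m)x-\theta\,\mathrm{Ad}(m)x=\mathrm{Ad}(m)\Theta(x)$, completing the proof.

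There is no real obstacle here; the only point worth being careful about is the verification that $\mathrm{Ad}(m)$ commutes with $\theta$, which I would justify by noting that $\theta$ integrates to an involutive automorphism $\Theta_G$ of $G$ whose fixed point set is $K$, so for $m\in K$ and $x\in\g$, $\theta(\mathrm{Ad}(m)x)=d\Theta_G(\mathrm{Ad}(m)x)=\mathrm{Ad}(\Theta_G(m))\theta(x)=\mathrm{Ad}(m)\theta(x)$.
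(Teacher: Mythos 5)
Your proof is correct and follows essentially the same route as the paper's: well-definedness from $\theta(\g_\gamma)=\g_{-\gamma}$, injectivity from $\g_\gamma\cap\g_{-\gamma}=\{0\}$, surjectivity by decomposing $y\in\s_\gamma$ as $y_++y_-$ and using $\theta(y)=-y$, and $M$-equivariance from the fact that ${\rm Ad}(m)$ commutes with $\theta$. You simply spell out details (notably the commutation of ${\rm Ad}(m)$ with $\theta$) that the paper leaves implicit.
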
  

\begin{proof} First, since $\theta(\g_\gamma)=\g_{-\gamma}$, 
$\Theta$ is well defined, in the sense that it maps $\g_\gamma$ to $\s_\gamma$.
The map is also injective:  $x-\theta x=0$ and $x\in\g_\gamma$ implies $x=0$.
The map is also surjective: if $y\in \s_\gamma$, then we can write it as
$y=y_1+y_2,$ with $y_1\in\g_\gamma$
and $y_2\in\g_{-\gamma}$; since $y\in \s$, we have $\theta(y)=-y$, which implies
$y_2=-\theta(y_1)$, thus
$y=y_1-\theta(y_1)=\Theta(y_1)$.
The $M$-equivariance of $\Theta$ follows from the $M$-equivariance of $\theta$.
\end{proof}

We now take into account  that the map described in Theorem \ref{dkv} (b) is
$M$-equivariant, where $M$ acts on the domain by the Adjoint representation and
on the codomain via the $G$-action on $X$. We deduce:

\begin{corollary}\label{cwd} Fix $w\in W$. We have an $M$-equivariant diffeomorphism between the
Schubert
cell $Nw$ and the space $\sum \s_{\gamma},$
where the sum runs over all $\gamma\in \Phi^+$ such that
$w^{-1}\gamma\in \Phi^-$.
\end{corollary}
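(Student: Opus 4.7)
The plan is to obtain the desired diffeomorphism as a composition of two $M$-equivariant maps already established in the excerpt. First, I would invoke Proposition \ref{isomor}, which supplies, for each $\gamma \in \Phi^+$, an $M$-equivariant linear isomorphism $\Theta_\gamma : \g_\gamma \to \s_\gamma$. Taking the direct sum of these $\Theta_\gamma$ over those $\gamma \in \Phi^+$ which satisfy $w^{-1}\gamma \in \Phi^-$ yields an $M$-equivariant linear isomorphism
\[
\bigoplus \g_\gamma \;\longrightarrow\; \bigoplus \s_\gamma,
\]
whose inverse is again $M$-equivariant and, being linear, a diffeomorphism.

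Next, I would use Theorem \ref{dkv}(b), which provides an $M$-equivariant diffeomorphism $\bigoplus \g_\gamma \to Nw$, $x \mapsto \exp(x)w$, with the same index set of positive roots. The $M$-equivariance here is exactly the content of the paragraph just preceding the statement of the corollary: $M$ acts on the domain via the Adjoint representation and on the codomain via the restriction of the $G$-action on $X$. Composing this diffeomorphism with the inverse of the direct sum of the $\Theta_\gamma$'s produces the desired $M$-equivariant diffeomorphism $\bigoplus \s_\gamma \to Nw$.

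I do not expect any real obstacle in this argument. The only things to verify are that the direct sum of the $\Theta_\gamma$'s is well defined and $M$-equivariant, and that the $M$-actions respect the decompositions on both sides; these follow immediately from the fact that each root space $\g_\gamma$ and each $\s_\gamma$ is $M$-invariant, as noted explicitly just before Proposition \ref{isomor}. The only point requiring mild care is bookkeeping of the direction of the composition: Theorem \ref{dkv}(b) has $\bigoplus \g_\gamma$ as its source, so the linear isomorphism above must be inverted before composing with $x \mapsto \exp(x)w$.
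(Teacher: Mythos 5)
Your proposal is correct and follows exactly the route the paper takes: it combines the $M$-equivariance of the map $x\mapsto \exp(x)w$ from Theorem \ref{dkv}(b) with the $M$-equivariant linear isomorphisms $\g_\gamma\to\s_\gamma$ of Proposition \ref{isomor}, summed over the relevant positive roots. The paper states this deduction in a single sentence, so your version is simply a slightly more explicit write-up of the same argument.
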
 


\section{The symmetric space ${\rm E}_{6(-26)}/{\rm F}_4$}\label{lasts2}

In this section we will outline the construction of the (non-compact) symmetric space
mentioned in the title. We will try to make more clear several aspects mentioned in Subsection \ref{rfm}.
For instance, we will prove that the root spaces $\s_\gamma$ in the 
 decomposition described by Equation (\ref{rooa}) 
are the $\h_\gamma$ described in Subsection \ref{rfm}. This is an important fact,
because it allows us to deduce the presentation of $C_\sigma$ given by Equation (\ref{cs})
from Theorem \ref{dkv} (b) and Proposition \ref{isomor}.
The main reference of this section is the article \cite{Fr} by Freudenthal.

Recall that by Definition \ref{gpf}, the group of all linear transformations 
of $\h_3(\O)$ which preserve the product $\circ$  is ${\rm F}_4$. We   define the determinant function on $\h_3(\O)$ as follows:
$$\det (a)= \frac{1}{3}{\rm tr} (a\circ a \circ a)-\frac{1}{2}{\rm tr}  (a\circ a) {\rm tr}  a +\frac{1}{6}({\rm tr}  a)^3,$$ 
for all $a\in\h_3(\O)$. Let us consider the group of all linear transformations  of 
$\h_3(\O)$ which leave the determinant invariant. It turns out that this group is
just ${\rm E}_{6(-26)}$ (see Subsection \ref{rfm} for the definition of this group).
From the formula of the determinant above we deduce easily that
${\rm E}_{6(-26)}$ contains ${\rm F}_4$. Less obvious is that the latter group is a
maximal compact subgroup of the former.
The Lie algebra $\fg_4$ consists of all linear transformations of $\h_3(\O)$ of
the form $$\tilde{b} : \h_3(\O)\to\h_3(\O), \ 
\tilde{b}(y)=[b,y],$$
where $b$ is a $3\times 3$ matrix with entries in $\O$ such that $b=-b^*$
(that is, $b$ is skew-Hermitian).
Here and everywhere else in this section $[ \ , \ ]$ denotes the usual matrix commutator.
To any $a\in \h_3^0(\O)$ we attach the $\bR$-linear transformation
$\hat{a}$ of $\h_3(\O)$ given by
$$\hat{a}: \h_3(\O)\to \h_3(\O), \ \hat{a}(y)= a\circ y, \ {\rm for \ all}  \ y \in \h_3(\O).$$
The  Cartan decomposition of $\eg_{6(-26)}$ corresponding to $\fg_4$ is described in the following proposition (see \cite[end of Section 8.1.1]{Fr}).  

\begin{proposition}
If $c$ is in the Lie algebra $\eg_{6(-26)}$, then there 
exists $a\in \h_3^0(\O)$ and $b$ a $3\times 3$ skew-Hermitian matrix with entries in
$\O$ such that  $c=\tilde{b}+\hat{a}$. The matrices $a$ and $b$ are uniquely determined by $c$.
\end{proposition}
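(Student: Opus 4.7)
My plan rests on the Cartan decomposition $\eg_{6(-26)} = \fg_4 \oplus \h_3^0(\O)$ recorded in Equation~(\ref{card}) above, realized concretely by operators on $\h_3(\O)$: the $\fg_4$ factor by the operators $\tilde{b}$ described just before the proposition, and the $\h_3^0(\O)$ factor by $a \mapsto \hat{a}$. Granting these two identifications (the first of which is stated in the excerpt), the proposition amounts to checking that $\hat{a}$ genuinely lands in $\eg_{6(-26)}$ and that the resulting sum decomposition is unique.

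For the first point, I would verify that $\hat{a} \in \eg_{6(-26)}$ when $\mathrm{tr}(a) = 0$ by differentiating the determinant. Using the polynomial expression $\det(y) = \tfrac{1}{3}\mathrm{tr}(y\circ y\circ y) - \tfrac{1}{2}\mathrm{tr}(y\circ y)\mathrm{tr}(y) + \tfrac{1}{6}\mathrm{tr}(y)^3$ together with the standard associativity of the Jordan trace form, namely $\mathrm{tr}(x\circ (y\circ z)) = \mathrm{tr}((x\circ y)\circ z)$, the directional derivative $\tfrac{d}{dt}\big|_{t=0}\det(y + t(a\circ y))$ collapses to a multiple of $\mathrm{tr}(a)\cdot \det(y)$ and hence vanishes identically.

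For uniqueness, I would suppose $\tilde{b} + \hat{a} = 0$ and apply both sides to $I = \mathrm{Diag}(1,1,1)$. Since $[b,I] = 0$ and $a\circ I = a$, this already gives $a = 0$, whence $\tilde{b} \equiv 0$, i.e.\ $[b,y] = 0$ for all $y\in\h_3(\O)$. Plugging in $y = d_1$ and $y = d_2$ forces all off-diagonal entries of $b$ to vanish, and plugging in a Hermitian matrix whose only nonzero entries are $y_{ij} = p$, $y_{ji} = \bar{p}$ with $p\in\O$ yields the relation $b_{ii}\,p = p\,b_{jj}$ for all $p$; setting $p = 1$ gives $b_{ii} = b_{jj}$, and then the requirement that this common purely-imaginary octonion commute with every $p\in\O$ (whose center is $\bR$) forces $b = 0$. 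Combined with the dimension count $\dim\fg_4 + \dim\h_3^0(\O) = 52 + 26 = 78 = \dim\eg_{6(-26)}$, this injectivity simultaneously delivers existence of the decomposition and uniqueness.

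The main obstacle is the determinant-invariance calculation, where the non-associativity of the Jordan product makes direct manipulation of the polynomial expression for $\det$ delicate. The cleanest route is to exploit $\mathrm{F}_4$-equivariance: every $a\in \h_3^0(\O)$ is $\mathrm{F}_4$-conjugate to a real trace-zero diagonal matrix by the normal form recalled in Section~\ref{two}, and since $\mathrm{F}_4 \subset \mathrm{E}_{6(-26)}$ already preserves $\det$, it suffices to check the infinitesimal invariance for $a = \mathrm{Diag}(x_1, x_2, x_3)$ with $x_1 + x_2 + x_3 = 0$; in this diagonal case, $a\circ y$ reduces to an essentially coordinate-wise rescaling of $y$, and the calculation is elementary.
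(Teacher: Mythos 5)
The paper does not actually prove this proposition --- it is quoted from Freudenthal \cite[end of Section 8.1.1]{Fr} without argument --- so your attempt has to stand entirely on its own. Two of your three steps are sound. The infinitesimal invariance of $\det$ under $\hat{a}$ for traceless $a$ is fine in outline (either via the associativity of the Jordan trace form together with the adjugate identity, or via the ${\rm F}_4$-reduction to diagonal $a$). The uniqueness argument is also correct and clean: evaluating at $I$ kills the $\tilde{b}$ term and returns $a$, and your computation of the kernel of $b\mapsto\tilde{b}$ (reduce to diagonal $b$ using $d_1,d_2$, then use that the centre of $\O$ is $\bR$) is right.

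The existence half, however, has a genuine gap, and your own injectivity lemma exposes it. The space of skew-Hermitian $3\times 3$ octonionic matrices has real dimension $3\cdot 7+3\cdot 8=45$ (purely imaginary diagonal entries plus three free off-diagonal entries). Since you prove that $b\mapsto\tilde{b}$ is injective, the space $\{\tilde{b}\}$ is $45$-dimensional, so $\{\tilde{b}\}\oplus\{\hat{a}\}$ has dimension at most $45+26=71<78=\dim\eg_{6(-26)}$, and the count $52+26=78$ you invoke does not apply to the operators you are actually using. In other words, the identification ``$\fg_4=\{\tilde{b}\}$'' that you grant at the outset is dimensionally impossible ($45\neq 52$): the derivation algebra of $(\h_3(\O),\circ)$ is spanned by the $[b,\cdot]$ with $b$ skew-Hermitian of octonionic trace zero ($38$ dimensions) \emph{together with} a copy of $\mathrm{Der}(\O)\cong\mathfrak{g}_2$ ($14$ dimensions) acting entrywise, and the latter piece is invisible in your setup. (This is a defect of the paper's informal recollection of Freudenthal's notation as much as of your proof, but a careful reading of your own argument --- injectivity on a $45$-dimensional domain versus $\dim\fg_4=52$ --- should have flagged the inconsistency.) As written, your argument establishes that the sum $\{\tilde{b}\}+\{\hat{a}\}$ is direct, but not that it exhausts $\eg_{6(-26)}$; existence cannot be recovered without first repairing the description of $\fg_4$.
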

We see from here that $\eg_{6(-26)}=\fg_4\oplus \h_3^0(\O)$ is a Cartan decomposition, as already mentioned  in Subsection \ref{rfm}.

Note that the elements of $\eg_{6(-26)}$ are linear endomorphisms of $\h_3(\O)$.
We denote the Lie bracket by $[ \ , \ ]_*$: it is given by the commutator  
of the endomorphisms. We need the following lemma:

\begin{lemma} If $a,x \in \h_3^0(\O)$, then:
\begin{align*}
{}&  (i) \  [\hat{x},\hat{a}]_*=\frac{1}{4}\widetilde{[x,a]}\\
{}& (ii)  \ [\hat{x},[\hat{x},\hat{a}]_*]_* =\frac{1}{4}\widehat{[x,[x,a]]}.
\end{align*}
\end{lemma}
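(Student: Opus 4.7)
The plan is to prove (i) by direct expansion and then deduce (ii) from (i) together with one companion identity of the same type.

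For (i), using $u\circ v = \tfrac12(uv+vu)$ one expands
\[
4[\hat x,\hat a]_*(y) = x(ay)+x(ya)+(ay)x+(ya)x - a(xy) - a(yx) - (xy)a - (yx)a,
\]
while
\[
\widetilde{[x,a]}(y) = [[x,a],y] = (xa)y - (ax)y - y(xa) + y(ax).
\]
Were the matrix multiplication associative, the equality of these two expressions (after dividing the first by $4$) would be the elementary identity $[L_x,L_a] = \tfrac14\,\mathrm{ad}_{[x,a]}$ that holds in the special Jordan algebra attached to any associative algebra. In our non-associative setting, the difference between the two sides is a signed sum of associators $\{p,q,r\} := (pq)r - p(qr)$ with entries in $\h_3(\O)$. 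Since $\O$ is alternative, its associator alternates in its three arguments; a routine book-keeping argument, using this alternation together with the Hermiticity of $x$ and $a$, pairs each associator in the difference with its negative, so the difference vanishes.

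For (ii), we rely on the companion identity
\[
[\hat x, \tilde b]_* = \widehat{[x,b]},
\]
valid for every $x \in \h_3^0(\O)$ and every $3\times 3$ skew-Hermitian matrix $b$ over $\O$; note that $[x,b]=xb-bx$ is then Hermitian and traceless, so $\widehat{[x,b]}$ is defined. Granting this, (i) yields
\[
[\hat x,[\hat x,\hat a]_*]_* = \tfrac14\,[\hat x,\widetilde{[x,a]}]_* = \tfrac14\,\widehat{[x,[x,a]]},
\]
which is precisely (ii). The companion identity is proved in the same manner as (i): one expands both sides, and the obstruction to equality is again a sum of associators that cancel pairwise by alternativity of $\O$ combined with the Hermitian/skew-Hermitian symmetries of $x$ and $b$.

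The main obstacle is the non-associativity of $\O$. In the associative case both formulas reduce to one-line computations with the Jordan product. For the Albert algebra $\h_3(\O)$, naive expansion produces extra associator terms that do not vanish individually; the proof therefore rests on tracking these associators and verifying their pairwise cancellation. This is a standard technicality underlying the description of $\fg_4$ as the derivation algebra of $\h_3(\O)$, and equivalently the Cartan decomposition $\eg_{6(-26)} = \fg_4 \oplus \h_3^0(\O)$ recalled earlier.
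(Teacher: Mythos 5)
Your instinct that the entire content of (i) is an associator computation is correct — the paper's own proof simply writes $x\circ(a\circ y)-a\circ(x\circ y)=\frac{1}{4}[[x,a],y]$ as if the matrix entries associated — but the cancellation you assert does not occur. Collecting the twelve terms, the difference $4[\hat x,\hat a]_*(y)-[[x,a],y]$ equals
$$-\{x,a,y\}+\{a,x,y\}+\{y,a,x\}-\{y,x,a\}-\{x,y,a\}+\{a,y,x\},$$
a signed sum over all six orderings. Two problems. First, the associator of $3\times 3$ matrices over $\O$ is not alternating: alternativity of $\O$ governs associators of individual entries, not of matrices. Second, even if one pretends it alternates, the six signs do not pair off — the sum collapses to $2\{a,x,y\}$, not to $0$. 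And this is not an artifact of the bookkeeping: identity (i) is actually false for general $x,a\in\h_3^0(\O)$. Take $x=pE_{12}+\bar pE_{21}$, $a=qE_{12}+\bar qE_{21}$, $y=rE_{23}+\bar rE_{32}$ with $p,q$ purely imaginary. The $(2,3)$-entry of $[\hat x,\hat a]_*(y)$ is $\frac14(\bar p(qr)-\bar q(pr))$, while that of $\frac14[[x,a],y]$ is $\frac14((\bar pq)r-(\bar qp)r)$; their difference is $\frac12\{p,q,r\}$, which is nonzero whenever $p,q,r$ generate all of $\O$. The true statement is that $[\hat x,\hat a]_*-\frac14\widetilde{[x,a]}$ is the entrywise action of a derivation of $\O$; this is the ${\rm Der}(\O)$-summand in the Chevalley--Schafer description of ${\rm Der}(\h_3(\O))$, which the surrounding presentation of $\fg_4$ also suppresses.

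The saving grace is that the lemma is only ever invoked with $x\in\d^0$, i.e.\ $x$ a real diagonal matrix. Then every one of the six associators above has $x$ as an argument and vanishes outright (real entries associate with everything), so both (i) and (ii) hold with no cancellation needed. Your reduction of (ii) to (i) via the companion identity $[\hat x,\tilde b]_*=\widehat{[x,b]}$ is exactly the paper's route, and that identity carries the same associator caveat, again harmless for $x\in\d^0$. So the repair is not a more careful pairing of associators — it is to restrict the hypothesis on $x$ (or to add the ${\rm Der}(\O)$ correction term), because the obstruction you are trying to cancel is genuinely there.
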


\begin{proof} (i) For any $y\in \h_3(\O)$ we have
$$[\hat{x},\hat{a}]_*(y)=\hat{x}(\hat{a}(y))-\hat{a}(\hat{x}(y))
=x\circ (a \circ y)- a\circ(x\circ y)
= \frac{1}{4}[[x,a],y].$$

(ii) For any $y\in \h_3(\O)$ we have
\begin{align*}{}&4[\hat{x},[\hat{x},\hat{a}]_*]_*(y)
=[\hat{x}, \widetilde{[x,a]}]_*(y)
=\hat{x}(\widetilde{[x,a]}(y))-
\widetilde{[x,a]}(\hat{x}(y))\\
{}& \ \ \ \ \ \ \ \ \ \ \ \ \ \  \ \ \ \ \ 
=x\circ ([[x,a],y])-[[x,a], x\circ y]
=[x,[x,a]]\circ y.
\end{align*}

\end{proof}

Let us now identify $\h_3^0(\O)$ with the subspace 
$\{\hat{x} \mid x\in \h_3^0(\O)\}$ of $\eg_{6(-26)}$. 
From Equation (ii) above we deduce that $\d^0$ is a maximal abelian subspace
of $\h_3^0(\O)$. By definition, a vector $a\in\h_3^0(\O)$ is a root vector with respect to a root
$\gamma$ if
$$[\hat{x},[\hat{x},\hat{a}]_*]_*=\gamma(\hat{x})^2\hat{a},$$
for all $x\in \d^0$. Again from Equation (ii) we deduce that  the roots of the symmetric space 
${\rm E}_{6(-26)}/{\rm F}_4$ with respect to $\d^0$  
are the functions $\frac{1}{2}(x_3-x_2)$, $\frac{1}{2}(x_1-x_3)$, and
$\frac{1}{2}(x_1-x_2)$ along with their negatives (see also Equation (\ref{gam})). The
corresponding root spaces
are the spaces $\h_{\gamma_1},\h_{\gamma_2}, $ and $\h_{\gamma_3}$  
described in Section \ref{rfm}.

\end{document}